\newcommand{\tp}[1]{\textcolor{black}{#1}}
 \newcommand{\h}[1]{\mathbf{#1}}
\let\csname equation*\endcsname\relax 
\let\csname endequation*\endcsname\relax 
\newtheorem{assumption}{Assumption}
\newtheorem{lemma}{Lemma}
\newtheorem{definition}{Definition}
\newtheorem{theorem}{Theorem}
\begin{document}
	
	\title[$L_1/L_2$ on the gradient] {Minimizing $L_1$ over $L_2$ norms on the gradient}
	
	\author{Chao Wang$^1$, Min Tao$^2$, Chen-Nee Chuah$^1$, James Nagy$^3$, and Yifei Lou$^4$}
	\address{$^1$Department of Electrical and Computer Engineering, University of California Davis, Davis, CA 95616, USA}
\address{$^2$Department of Mathematics, National Key Laboratory for Novel Software Technology, Nanjing University, Nanjing 210093, China}
\address{$^3$Department of Mathematics, Emory University, Atlanta, GA 30322, USA}
	\address{$^4$Department of Mathematical Sciences, University of Texas at Dallas, Richardson, TX 75080, USA}

	\ead{
		\mailto{chaowang.hk@gmail.com},
		\mailto{taom@nju.edu.cn},
		\mailto{chuah@ucdavis.edu},
		\mailto{jnagy@emory.edu},
		\mailto{yifei.lou@utdallas.edu}}
	
	\begin{abstract}
In this paper, we study the $L_1/L_2$ minimization on the gradient for imaging applications.  Several recent works have demonstrated that  $L_1/L_2$  is better than the $L_1$ norm when approximating the $L_0$ norm to promote sparsity. Consequently, we postulate that applying $L_1/L_2$ on the gradient is better than the classic total variation (the $L_1$ norm on the gradient) to enforce the sparsity of the image gradient. To verify our hypothesis, we consider a constrained formulation to reveal empirical evidence on the superiority of $L_1/L_2$  over $L_1$ when recovering piecewise constant signals from low-frequency measurements. Numerically, we design a specific splitting scheme, under which we can prove   subsequential \tp{and global} convergence for the alternating direction method of multipliers (ADMM) \tp{under certain conditions}. Experimentally, we demonstrate visible improvements of $L_1/L_2$  over $L_1$ and other nonconvex regularizations for image recovery from low-frequency measurements and two medical applications of MRI and CT reconstruction. All the numerical results show the efficiency of our proposed approach.
	\end{abstract}
	
	\vspace{2pc}
	\noindent{\it Keywords}:  $L_1/L_2$ minimization, piecewise constant images, minimum separation, alternating direction method of multipliers
	
	\submitto{Inverse Problems}

	\section{Introduction}
Regularization methods play an important role in inverse problems to refine the solution space by prior knowledge and/or special structures.
For example, the celebrated total variation (TV) \cite{rudinOF92} prefers piecewise constant images, while total generalized variation (TGV)  \cite{Bredies2010TGV} and fractional-order TV \cite{chen2015Fractional,Zhang2015Atotal} tend to preserve piecewise smoothness of an image. TV can be defined either isotropically or anisotropically. The anisotropic TV	 \cite{osherE03} \tp{in the discrete setting} is equivalent to applying the $L_1$ norm on the image gradient. As the $L_1$ norm is often used to enforce a signal being \textit{sparse,} one can interpret the TV regularization as to promote the sparsity of gradient vectors. 


To find the sparsest signal, it is straightforward  to minimize the $L_0$ norm (counting the number of nonzero elements), which is unfortunately NP-hard \cite{natarajan95}. A popular approach involves the convex relaxation of using the $L_1$ norm to replace the ill-posed $L_0$ norm, with the equivalence between $L_1$ and $L_0$ for sparse signal recovery given in terms of restricted isometry property (RIP) \cite{CRT}. 
However, Fan and Li  \cite{fan2001variable} pointed out that   the $L_1$ approach is biased towards  large coefficients, and proposed to minimize a nonconvex regularization, called smoothly clipped absolute deviation (SCAD). Subsequently, various nonconvex  functionals emerged  such as 
minimax concave penalty (MCP) \cite{zhang2010nearly}, capped $L_1$  \cite{zhang2009multi,shen2012likelihood,louYX16}, and transformed $L_1$ \cite{lv2009unified,zhangX17,zhangX18}. 
Following the literature on sparse signal recovery, there is a trend to apply a nonconvex regularization on the gradient to deal with images. For instance, Chartrand \cite{chartrand07} discussed both the $L_p$    norm with $0<p<1$ for sparse signals and $L_p$ on the gradient for magnetic resonance imaging (MRI), while   MCP   on the gradient was proposed in  \cite{you2019nonconvex}.

\tp{Recently, a scale-invariant functional 
$L_1/L_2$  was examined, which gives promising results in recovering sparse signals \cite{l1dl2,L1dL2_accelerated,TaoLou2020} and sparse gradients \cite{wang2020limitedct}. In this paper,  we rely on a constrained formulation to characterize some scenarios, under which the quotient of the $L_1$   and $L_2$ norms on the gradient performs well. In particular, we borrow the analysis of a \textit{super-resolution} problem, which refers to recovering a sparse signal from its low-frequency measurements.} Cand\'es 
 and Fernandez-Granda \cite{candesF13} proved that if a signal has  spikes (locations of nonzero elements) that are sufficiently separated, then the $L_1$ minimization yields an exact recovery for super-resolution. We innovatively design a certain type of piecewise constant signals that lead to well-separated spikes after taking the gradient. Using such signals, we  empirically demonstrate that the TV minimization can find the desired solution under a similar separation condition as in \cite{candesF13}. We also illustrate that $L_1/L_2$ can deal with less separated spikes in gradient,  and is better at preserving image contrast than $L_1$. These empirical evidences show \tp{
 $L_1/L_2$ holds great potentials in promoting sparse gradients and preserving image contrasts.}
 To the best of our knowledge, it is the first time to relate the exact recovery of gradient-based methods to minimum separation and image contrast in a super-resolution problem.






Numerically, we consider the same splitting scheme used in an unconstrained formulation \cite{wang2020limitedct} to minimize the $L_1/L_2$ on the gradient, followed by the alternating direction method of multipliers (ADMM) \cite{boydPCPE11admm}. We formulate the linear constraint using an indicator function, which is not strongly convex. As a result, the convergence analysis in the unconstrained model \cite{wang2020limitedct} is not directly applicable to this problem. We   utilize the property of indicator function as well as the optimality conditions for constrained optimization problems to prove that the sequence generated by the proposed algorithm has a subsequence converging to a stationary point.
\tp{Under a stronger assumption, we can establish the convergence of the entire sequence, referred to as \textit{global convergence}. }


We present some algorithmic insights on  computational efficiency of our proposed algorithm for nonconvex optimization. In particular, we show an additional box constraint   not only prevents the solution from being stuck at local minima, but also stabilizes the algorithm. Furthermore, we discuss algorithmic behaviors on two types of applications: MRI and computed tomography (CT). For the MRI reconstruction, a subproblem in ADMM has a closed-form solution, while an iterative solver is required  for CT. As the accuracy of the subproblem varies between MRI and CT, we shall alter  internal settings of our algorithm accordingly.
In summary, this paper relies on a constrained formulation to discuss theoretical and computational aspects of a nonconvex regularization for imaging problems. The major contributions are three-fold:
\begin{enumerate}[(i)]
    \item We reveal  empirical evidences towards exact recovery of piecewise constant signals and demonstrate the superiority of $L_1/L_2$ on the gradient over TV.
    \item \tp{We establish the subsequential  convergence of the proposed algorithm and explore the global convergence under the certain assumptions. }
    \item We conduct extensive experiments to characterize computational efficiency of  our algorithm and discuss how internal settings can be customized to cater to specific imaging applications, such as MRI and limited-angle CT reconstruction. Numerical results highlight the superior performance of our approach over other gradient-based regularizations.
\end{enumerate}

The rest of the paper is organized as follows. \Cref{sect:prelim} defines the notations that will be used through the paper, and gives a brief review on the related works. The empirical evidences for TV's exact recovery and advantages of the proposed model  are given in \Cref{sec:toy_example}. The numerical scheme is detailed in \Cref{sect:model}, followed by convergence analysis in \Cref{sect:conv}. \Cref{sec:experiments} presents three types of imaging applications: super-resolution, MRI  and  CT reconstruction problems. Finally, conclusions and future works are given in \Cref{sec:conclusions}.

\section{Preliminaries}
	\label{sect:prelim}
	
We use a bold letter to denote a vector, a capital letter to denote a matrix or linear operator, and a calligraphic letter for a functional space.  We use $\odot$ to denote the component-wise multiplication of two vectors. When a function (e.g., sign, max, min) applies to a vector, it returns a vector with corresponding component-wise operation.

We adopt a discrete setting to describe the related models.  
	Suppose a two-dimensional (2D) image is defined on an $m\times n$ Cartesian grid. By using a standard linear index, we can represent a 2D image as a vector, i.e., the $((i-1)m+j)$-th component  denotes the intensity value at pixel $(i,j).$  	We define a discrete gradient operator,
	\begin{equation}\label{eq:gradient}
		D \h u:= \left[\begin{array}{l}
		D_x\\
		D_y
		\end{array}
		\right]\h u,
	\end{equation}
where $D_x,D_y$ are the finite forward difference operator \tp{with periodic boundary condition} in the  horizontal and vertical directions, respectively. We denote  $N := mn$ and the Euclidean spaces by  $\mathcal X:=\mathds{R}^{N}, \mathcal Y:=\mathds{R}^{2N}$, then $\h u\in \mathcal X$ and $D\h u\in \mathcal Y$. We can apply the standard norms, e.g., $L_1,L_2,$  on vectors $\h u$ and $D\h u.$ For example, the $L_1$ norm on the gradient, i.e., $\|D\h u\|_1$, is the anisotropic TV regularization \cite{osherE03}. Throughout the paper, we use TV and ``$L_1$ on the gradient'' interchangeably.
Note that the isotropic TV \cite{rudinOF92} is the $L_{2,1}$ norm, i.e.,
$\|(D_x\h u,D_y\h u)^T\|_{2,1},$
although Lou et al.~\cite{louZOX15} claimed to consider  a weighted difference of anisotropic and isotropic TV based on  the $L_1$-$L_2$ functional \cite{yinEX14,louYHX14,maLH17,louY18} (isotropic TV is not the $L_2$ norm on the gradient.)

We examine the $L_1/L_2$ penalty on the gradient in a constrained formulation,
	\begin{equation}\label{eq:grad_con}
	\min_{\h u}  \frac{\| D\h u \|_1}{\|  D \h u \|_2} \quad \mathrm{s.t.} \ \  A \h u =\h b.
	\end{equation}
One way  to solve for \eqref{eq:grad_con} involves the following equivalent form
	\begin{equation}\label{eq:grad_con2}
	\min_{\h u, \h d, \h h }  \frac{\| \h d \|_1}{\| \h h \|_2} \quad \mathrm{s.t.} \ \  A \h u =\h b, \  \h d = D\h u,\  \h h = D\h u,
	\end{equation}
	with  two auxiliary variables $\h d$ and  $\h h$. For more details, please refer to \cite{l1dl2} that presented a proof-of-concept example  for  MRI reconstruction.
Since the splitting scheme \eqref{eq:grad_con2} involves two block variables of $\h u$ and $(\h d, \h h)$,
	 the existing ADMM convergence results  \cite{guo2017convergence,pang2018decomposition,wang2019global} are not applicable. 
An alternative approach was discussed in our preliminary work \cite{wang2020limitedct} for an unconstrained minimization problem,
	\begin{equation}\label{equ:split_model_uncon}
	\min_{\h u, \h h}  \frac{\| D\h u \|_1}{\| \h h \|_2}+\frac{\lambda}{2} \|A \h u -\h b\|_2^2 \quad \mathrm{s.t.} \quad \h h = D\h u,
	\end{equation}
	where $\lambda>0$ is a weighting parameter.
By only introducing one variable $\h h,$  the new splitting scheme \eqref{equ:split_model_uncon} can guarantee the ADMM framework with subsequential convergence.
	 
In this paper, we  incorporate the  splitting scheme \eqref{equ:split_model_uncon} to solve the constrained problem  \eqref{eq:grad_con}, which is crucial to reveal theoretical properties of the gradient-based regularizations for image reconstruction, as elaborated in \Cref{sec:toy_example}. 
\tp{Another contribution of this work lies in the convergence analysis, specifically for different optimality conditions  of the constrained problem, as opposed to unconstrained formulation presented in \cite{wang2020limitedct}.}
It is true that the constrained formulation  limits our experimental design in a noise-free fashion, but it helps us to draw conclusions solely on the model, ruling out the influence from other nuisances such as noises and tuning parameters. Our model \eqref{eq:grad_con} is parameter-free, while there is a parameter $\lambda$ in the unconstrained problem \eqref{equ:split_model_uncon}.	 


\section{Empirical studies}\label{sec:toy_example}

	We aim to demonstrate the superiority of $L_1/L_2$ on the gradient over TV  for a super-resolution problem \cite{candes2014towards}, in which a sparse vector can be exactly recovered via the $L_1$ minimization. 
	A mathematical model for \textit{super-resolution} is expressed as
\begin{equation}\label{eq:SRdiscrete}
b_k=\frac 1 {\sqrt{N}}\sum_{j=0}^{N-1} u_j e^{-i2\pi kj/N}, \qquad |k|\leq f_c,
\end{equation}
where $i$ is the imaginary unit, $\h u\in\mathbb R^N$ is a vector to be recovered, and $\h b\in\mathbb C^n$ consists of the given low frequency measurements with $n=2f_c+1<N$.
Recovering $\h u$ from $\h b$ is referred  to as super-resolution  in the sense that the underlying signal $\h u$ is defined on a fine grid with spacing $1/N$, while a direct inversion of $n$ frequency data yields a signal defined on a coarser grid with spacing $1/n$. For simplicity, we use matrix notation to rewrite \eqref{eq:SRdiscrete} as $\h b=S_nF \h u$, where $S_n$ is a sampling matrix that collects the required low frequencies and $F$ is the Fourier transform matrix. 
A sparse signal can be represented by
$
\h u=\sum_{j\in T} c_j\h e_{j},
$
where $\h e_j$ is the $j$-th canonical basis in $\mathbb R^N$, $T$ is the support set of $\h u$, and $\{c_j\}$ are coefficients. Following the work of \cite{candes2014towards}, the sparse spikes are required to be sufficiently separated to guarantee the exact recovery of the $L_1$ minimization. To make the paper self-contained, we provide the definition of minimum separation in Definition~\ref{def:MS} and an exact recovery condition in Theorem~\ref{thm:MS}.

\begin{definition}{(Minimum Separation \cite{candes2014towards})}\label{def:MS}
For an index set $T\subset \{1,\cdots,N\}$, the minimum separation (MS) of $T$ is defined as the closest wrap-around distance between any two elements from $T$,
	\begin{equation}
\triangle (T) := \min_{(t,\tau)\in T:t\neq \tau} \min\left\{|t-\tau|, \ N-|t-\tau|\right\}.
	\end{equation}
\end{definition}

\begin{theorem}{\cite[Corollary 1.4]{candes2014towards}}\label{thm:MS}
	Let $T$ be the support of $\h u$. If the minimum separation of $T$ obeys
	\begin{equation}\label{eq:MSdistance}
	\triangle(T)\geq \frac{1.87N}{f_c},
	\end{equation}
	then $\h u\in \mathbb R^N$ is the unique solution to the constrained $L_1$ minimization problem,
	\begin{equation}\label{eq:SRviaL1}
	\min_{\h u} \|\h u\|_1 \quad \mbox{\rm s.t.} \quad S_nF\h u=\h b.
	\end{equation}
\end{theorem}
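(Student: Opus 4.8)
The plan is to establish Theorem~\ref{thm:MS} by the classical \emph{dual certificate} (or \emph{dual polynomial}) argument. First I would record the optimality conditions for the convex program \eqref{eq:SRviaL1}: since $\|\cdot\|_1$ is convex and the constraint is linear, $\h u$ is a minimizer if and only if there is a dual vector $\h w$ with $(S_nF)^*\h w\in\partial\|\h u\|_1$; equivalently, the \emph{dual polynomial} $q:=(S_nF)^*\h w$ --- which is automatically a trigonometric polynomial with frequencies supported in $\{|k|\le f_c\}$ --- obeys $q_j=\mathrm{sign}(c_j)$ for $j\in T$ and $|q_j|\le 1$ for all $j$. To upgrade this to \emph{uniqueness} I would additionally demand the strict bound $|q_j|<1$ for every $j\notin T$, together with injectivity of $S_nF$ restricted to vectors supported on $T$; a short convexity argument then shows that any other feasible point has strictly larger $L_1$ norm.

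The heart of the proof is the explicit construction of such a $q$. Following the interpolation idea I would seek $q$ in the form
\begin{equation}
q(t)=\sum_{\tau\in T}\Big(\alpha_\tau\,\mathcal K(t-\tau)+\beta_\tau\,\mathcal K'(t-\tau)\Big),
\end{equation}
where $\mathcal K$ is a periodized kernel of degree $\le f_c$ with rapid decay away from the origin --- the natural choice being a Fej\'er-type (squared Dirichlet) kernel, so that $\mathcal K$, $\mathcal K'$, and $\mathcal K''$ all decay like inverse powers of the distance on the discrete torus. The coefficients $\{\alpha_\tau,\beta_\tau\}$ are pinned down by the interpolation conditions $q(\tau)=\mathrm{sign}(c_\tau)$ and $q'(\tau)=0$ for $\tau\in T$; the vanishing-derivative requirement is what forces $q$ to attain a genuine extremum of modulus exactly $1$ at each spike, which is essential for the strict inequality nearby. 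I would then verify that this $(2|T|)\times(2|T|)$ linear system is invertible under \eqref{eq:MSdistance}: since $\mathcal K(0)=1$, $\mathcal K'(0)=0$, and every off-diagonal entry involves $\mathcal K^{(\ell)}$ evaluated at points separated by at least $\triangle(T)$, the system matrix is a small perturbation of a fixed block-diagonal matrix once $\triangle(T)f_c/N$ is large; a Gershgorin / Neumann-series estimate gives invertibility and bounds $\|\alpha\|_\infty$, $\|\beta\|_\infty$ by an absolute constant (with $\beta$ smaller by a further factor of order $1/f_c$ after rescaling). The same kernel-matrix invertibility simultaneously delivers the injectivity of $S_nF$ on $T$-supported vectors needed in the uniqueness step.

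The main obstacle --- and the step in which the numerical constant $1.87$ gets pinned down --- is showing $|q(t)|<1$ for all $t\notin T$. I would partition the torus into small arcs around each $\tau\in T$ and the complementary region. On the complement, I would bound $|q(t)|$ directly by summing the decay estimates for $\mathcal K$ and $\mathcal K'$ over all spikes, using $\triangle(T)\ge 1.87N/f_c$ to push the resulting tail sum strictly below $1$. On each near arc I would run a second-order Taylor argument: because $q(\tau)=\pm1$ and $q'(\tau)=0$, it suffices to show that $\mathrm{Re}\big(\overline{\mathrm{sign}(c_\tau)}\,q''(t)\big)<0$ throughout the arc, which once more reduces to kernel estimates for $\mathcal K''$ together with the separation bound. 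Assembling these pieces produces a valid certificate and hence both existence and uniqueness. (In the paper this statement is precisely Corollary~1.4 of \cite{candes2014towards}, so one may alternatively simply invoke that reference.)
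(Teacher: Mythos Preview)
Your sketch accurately captures the dual-certificate construction from \cite{candes2014towards}, but note that the present paper does not prove this theorem at all: it is stated purely as background, with the proof delegated entirely to the cited reference (Corollary~1.4 of \cite{candes2014towards}). So there is no ``paper's own proof'' to compare against beyond the citation itself, which you already acknowledge in your final parenthetical remark.

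One small technical point on your outline: the Fej\'er kernel (squared Dirichlet) does not decay fast enough to make the off-support tail sums and the second-derivative bound work with the constant $1.87$. The original construction in \cite{candes2014towards} uses the \emph{square} of the Fej\'er kernel (essentially a fourth power of the Dirichlet-type kernel), which gives the $|t|^{-4}$-type decay needed to push the Gershgorin and far-field estimates through at that separation. If you intend to actually carry out the argument rather than cite it, you would need to upgrade the kernel accordingly.
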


We empirically extend the analysis from sparse signals to sparse gradients.  For this purpose, we construct a one-bar step function of length 100 with the first and the last $s$ elements taking value 0, and the remaining elements equal to 1, as illustrated in \Cref{fig:1bar}. The gradient of such signal is 2-sparse with MS  to be $\min(2s,100-2s)$ due to wrap-around distance. By setting 
$f_c=2,$ we only take $n=5$ low frequency measurements, and   reconstruct the signal by minimizing either $L_1$ or $L_1/L_2$ on the gradient. For simplicity, we adopt the CVX MATLAB toolbox \cite{cvx} for solving the TV model, 
\begin{equation}\label{eq:tv-con}
   \min_{\h u} \|D\h u\|_1 \quad \mathrm{s.t.} \quad A\h  u=\h  b,
\end{equation}
where we use $A=S_nF$ to be consistent with our setting \eqref{eq:grad_con}.  Note that the TV model  \eqref{eq:tv-con} is parameter free, while we need to tune an algorithmic parameter  for $L_1/L_2$. Please refer to \Cref{sect:model} for more details on the $L_1/L_2$ minimization, in which   one subproblem can be solved by CVX, and \Cref{sect:exp_alg} for \tp{sensitivity analysis} on this parameter.

	\begin{figure}[htp]
		\begin{center}
			\begin{tabular}{cc}
				\includegraphics[width=0.4\textwidth]{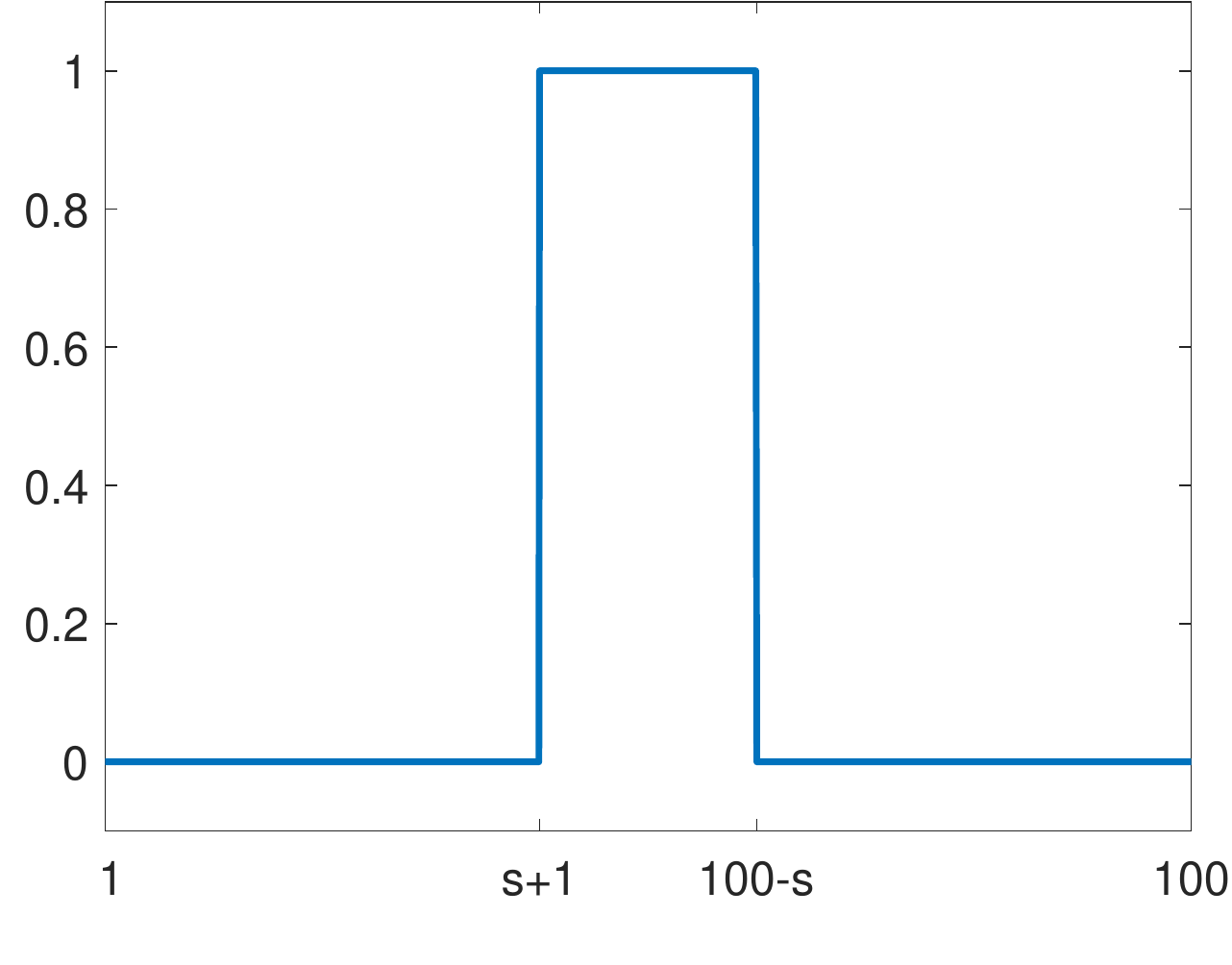}  &
				\includegraphics[width=0.4\textwidth]{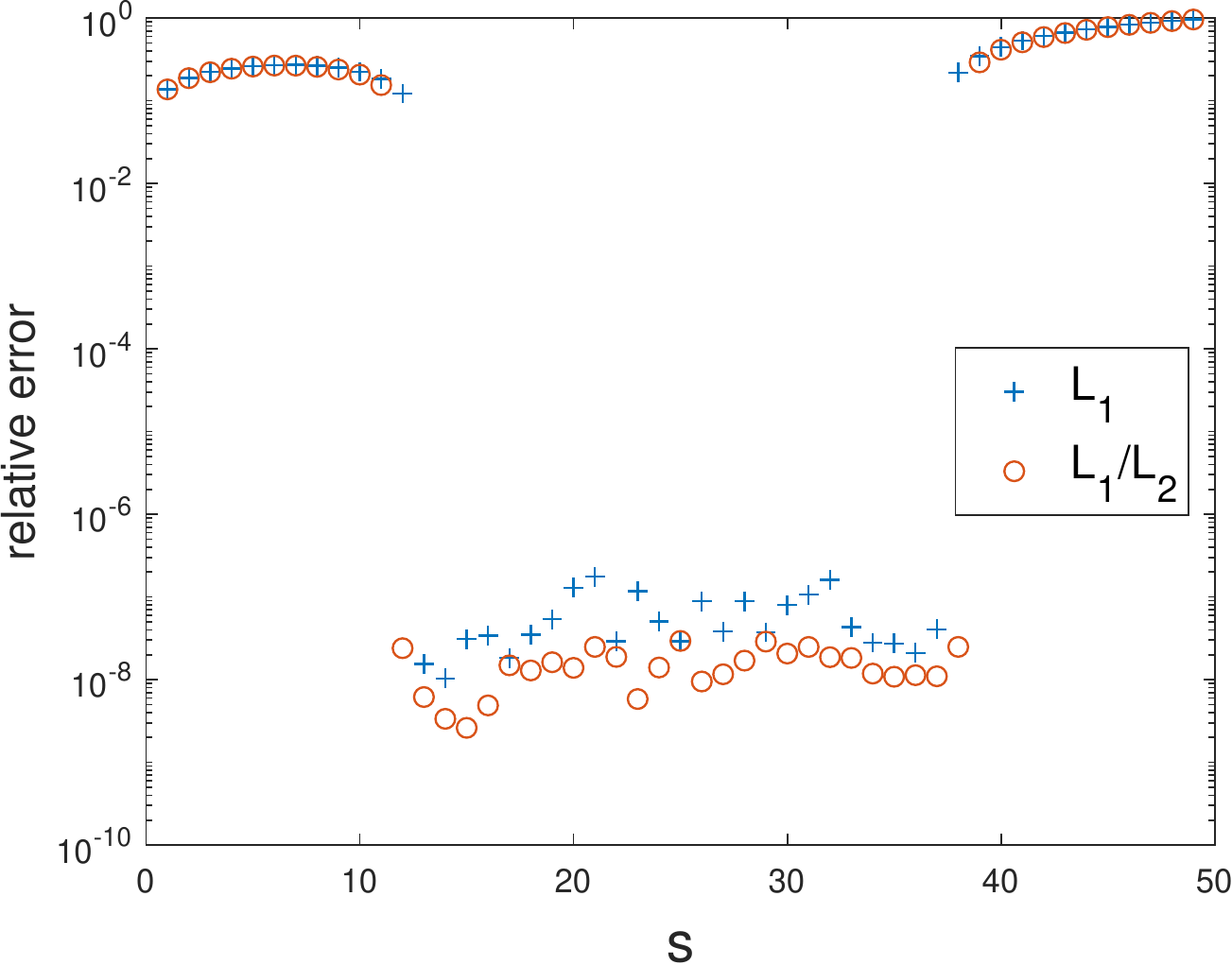}
			\end{tabular}
		\end{center}
		\caption{A general setting of a one-bar step function (left)  and reconstruction errors with respect to $s$ (right) by minimizing $L_1$ or $L_1/L_2$ on the gradient. \tp{The exact recovery interval  by $L_1$ is $s\in [13,37],$ which is smaller than $[12,38]$ by $L_1/L_2.$} }\label{fig:1bar}
	\end{figure}

\begin{figure}[htp]
		\begin{center}
			\begin{tabular}{cc}
				\includegraphics[width=0.4\textwidth]{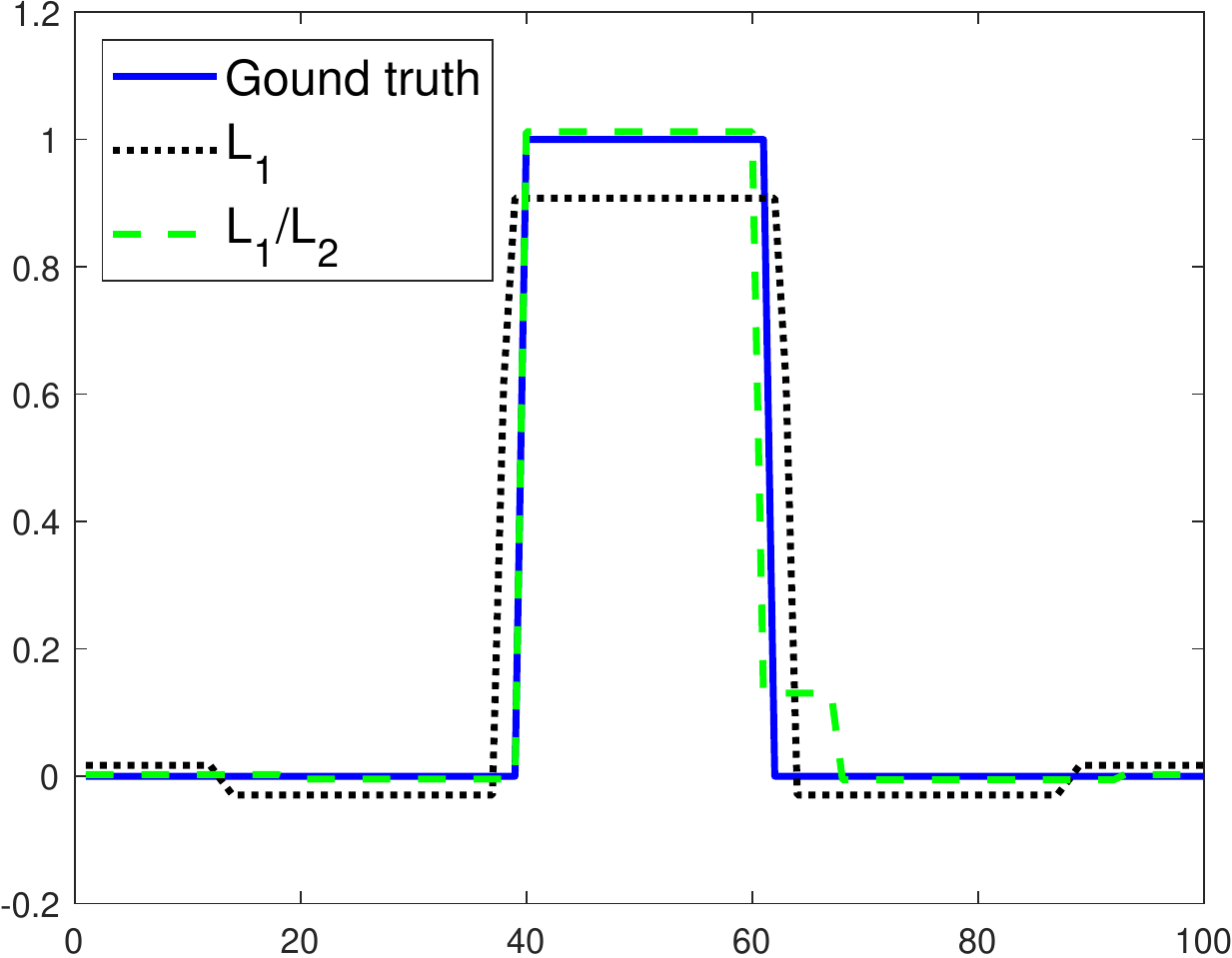} &
				\includegraphics[width=0.4\textwidth]{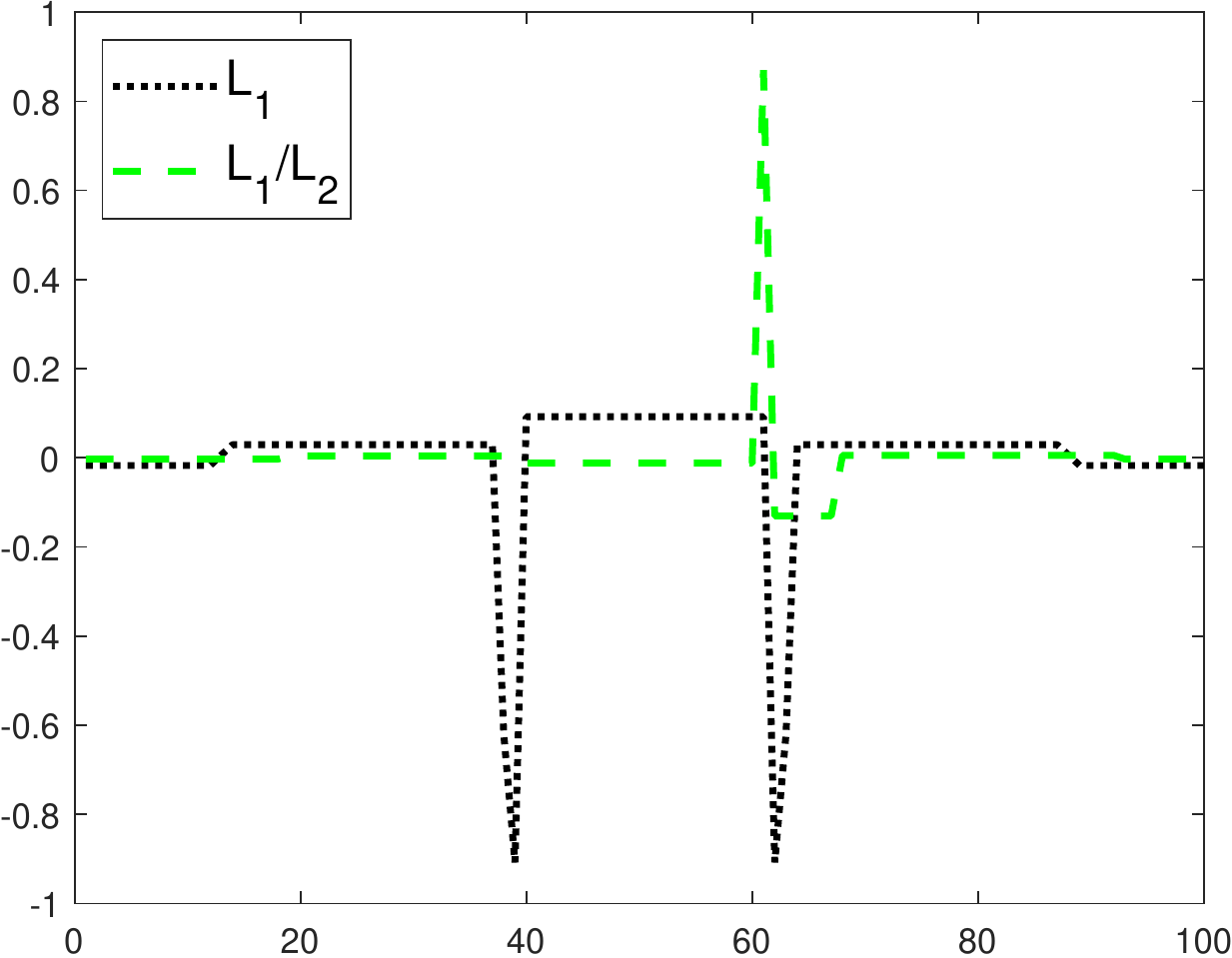} 
			\end{tabular}
		\end{center}
		\caption{A particular one-bar example (left) where
		both $L_1$ and $L_1/L_2$ models fail to find the solution. The different plot  (right) highlights that $L_1$ results in larger oscillations compared to $L_1/L_2.$}\label{fig:toy_example1bar}
	\end{figure}

By varying the value of $s$ that changes MS of the spikes in gradient, we compute the relative errors between the reconstructed solutions and the ground-truth signals. If we define an exact recovery for its relative error smaller than $10^{-6}$, we observe in \Cref{fig:1bar} that the exact recovery by $L_1$ occurs at $s\in [13,37],$ which implies that MS is larger than or equal to $26$. 
This phenomenon suggests
that  Theorem~\ref{thm:MS} might hold for sparse gradients by replacing the $L_1$ norm with the total variation.	\Cref{fig:1bar} also shows the exact recovery by $L_1/L_2$ at $s\in[12,38],$  meaning that $L_1/L_2$ can deal with less separated spikes than $L_1$.  Moreover, we further study the reconstruction results at $s=39,$ where both models fail to find the true sparse signal.  The restored solutions by these two models as well as the different plots between restored and ground truth are displayed in \Cref{fig:toy_example1bar}, showing that  our ratio model has smaller relative errors than $L_1$.

\begin{figure}[htp]
		\begin{center}
			\begin{tabular}{cc}
				\includegraphics[width=0.39\textwidth]{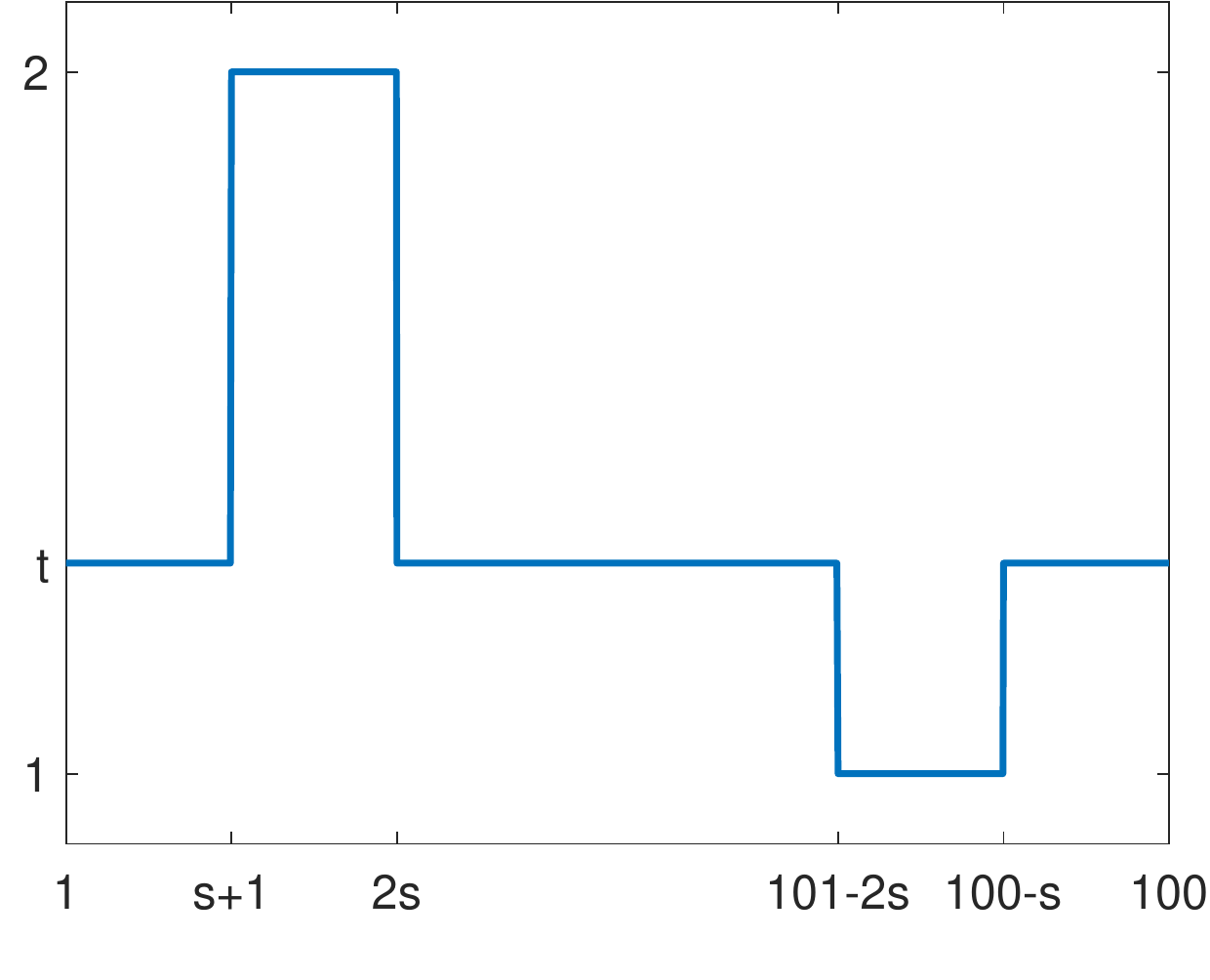} &
				\includegraphics[width=0.4\textwidth]{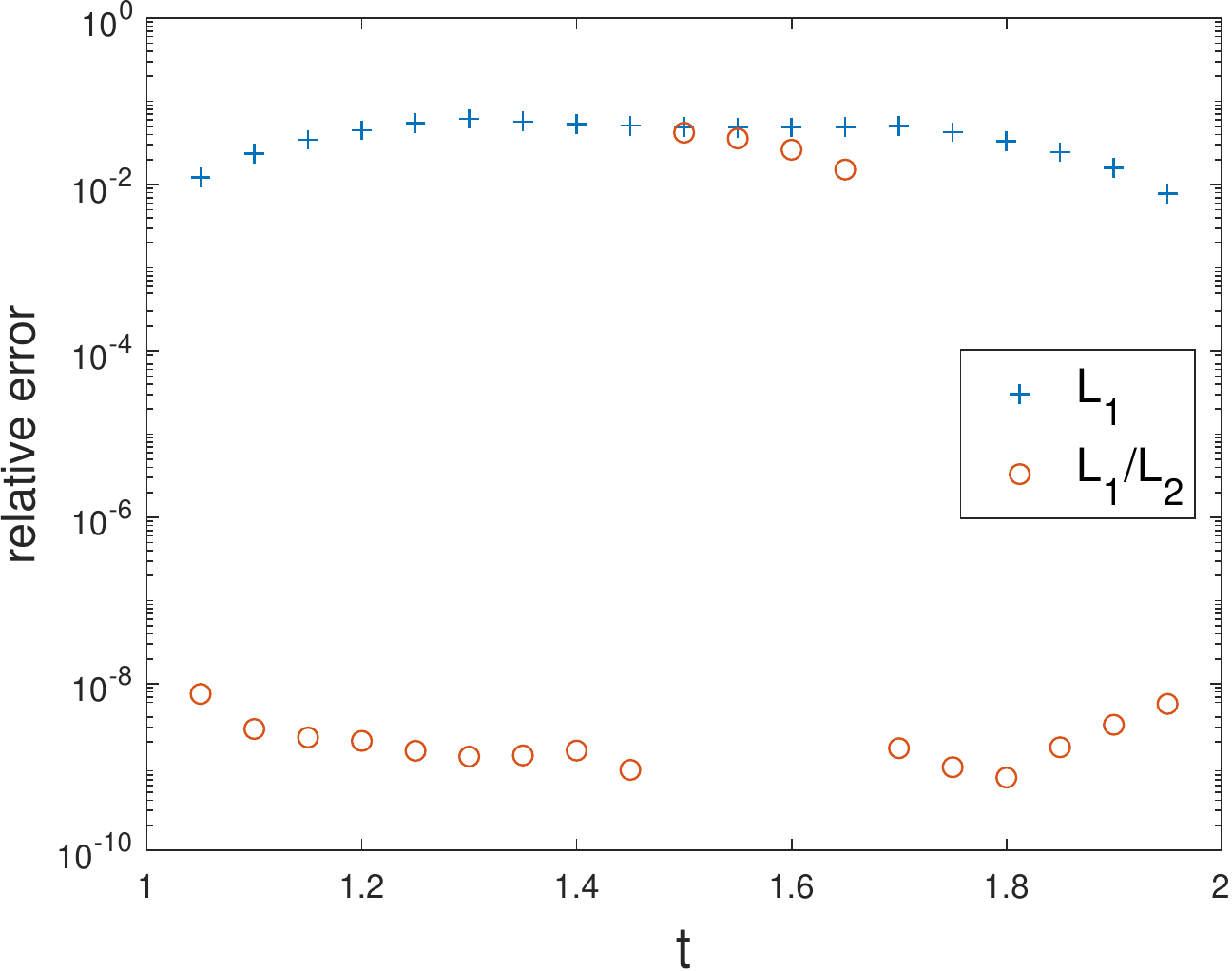} \\
			\end{tabular}
		\end{center}
		\caption{A general setting of a two-bar step function (left)  and reconstruction errors with respect to $t$ (right) by minimizing $L_1$ or $L_1/L_2$ on the gradient, \tp{showing that $L_1/L_2$ is better at preserving image contrast than $L_1$ (controlled by $t$). } }\label{fig:2bar}
	\end{figure}

	\begin{figure}[htp]
		\begin{center}
			\begin{tabular}{cc}
				\includegraphics[width=0.4\textwidth]{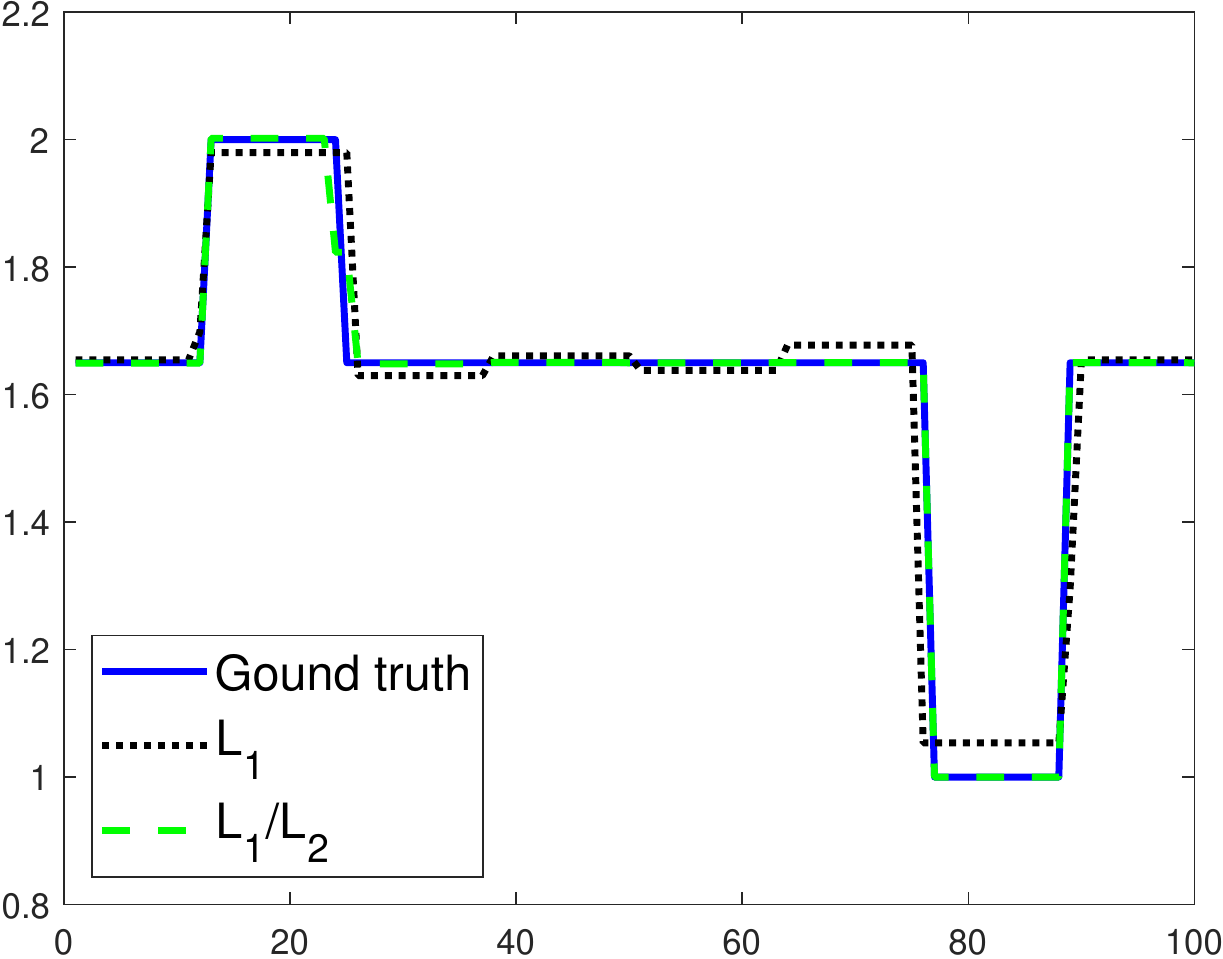}  &
				\includegraphics[width=0.4\textwidth]{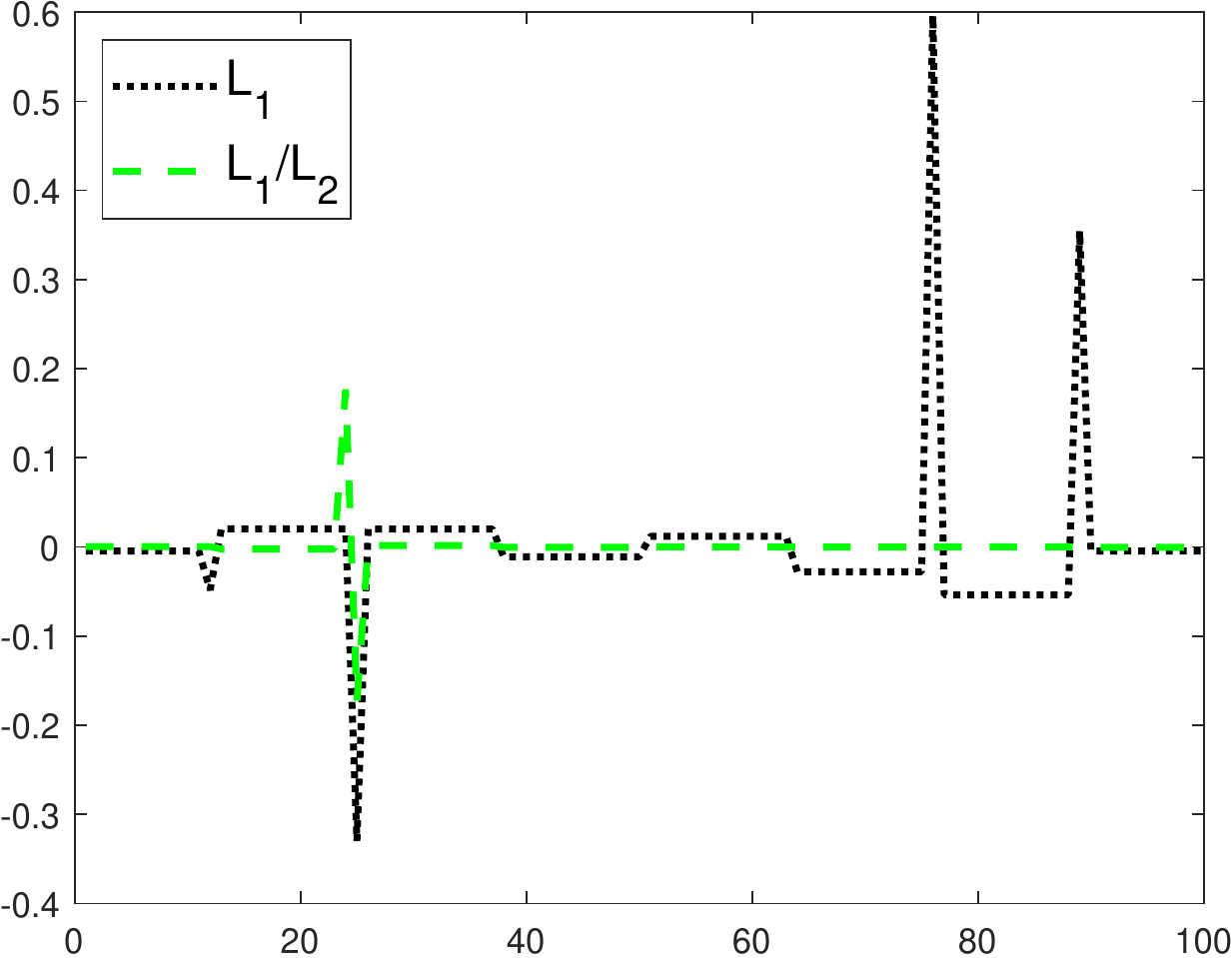} 
			\end{tabular}
		\end{center}
		\caption{
		A particular two-bar example where
		both $L_1$ and $L_1/L_2$ models fail to find the solution. The different plot is shown on the right.}\label{fig:toy_example2bar}
	\end{figure}

\Cref{fig:toy_example1bar} illustrates that the TV solution can not reach the top of the bar in the ground-truth, which is referred to as \textit{loss-of-contrast}.
Motivated by this well-known  drawback of TV, we postulate that the signal contrast may affect the performance of $L_1$ and $L_1/L_2$. To verify, we  examine a two-bar step function, in which the contrast varies by the relative heights of the two bars. Following MATLAB's notation, we set $\h u(s+1:2s)=2, \h u(\text{end}-2s+1:\text{end})=1,$ and the value of remaining elements   uniformly as $t$; see \Cref{fig:2bar} for a general setting. We fix $s=12$, and  vary the value of $t\in(1, 2)$ to generate signals with different intensity contrasts. Considering four spikes in the gradient, we set $f_c=4$ or equivalently 9 low-frequency measurements to reconstruct the signal. The reconstruction errors are plotted in \Cref{fig:2bar}, which shows that  $L_1$ fails in all the cases, and $L_1/L_2$ can find the signals
except for $t \in [1.5, 1.65]$. We further examine a particular case of  $t=1.65$ in \Cref{fig:toy_example2bar},
where both models fail to get an exact recovery, but 
$L_1/L_2$ yields smaller oscillations than $L_1$ near the edges. \Cref{fig:2bar,fig:toy_example2bar} demonstrate that $L_1/L_2$ is better at preserving image contrast than $L_1$. 


We verify that all the solutions of $L_1$ and $L_1/L_2$ satisfy the linear constraint $A\h u=\h b$ with high accuracy thanks for CVX. We further investigate when the $L_1$ approach fails, and discover that it  yields a solution that has a smaller $L_1$ norm compared to the $L_1$ norm of the ground-truth, which implies that $L_1$ is not sufficient to enforce gradient sparse. On the other hand, $L_1/L_2$ solutions often have higher objective value than the ground-truth, which calls for a better algorithm that can potentially find the ground-truth. We also want to point out that  $L_1/L_2$ solutions depend on  initial conditions. In \Cref{fig:1bar} and \Cref{fig:2bar},  we present the smallest relative errors  among 10 random vectors for initial values. 

\tp{The minimum separation distance in 1D (Definition~\ref{def:MS}) can be naturally extended to 2D. In fact, there are two types of minimum separation definitions in 2D: one  uses the $L_{\infty}$  norm to measure the distance \cite{candes2014towards}, while another definition is called \textit{Rayleigh regularity} \cite{donoho1992superresolution,morgenshtern2016super}. The
exact recovery for 2D sparse vectors was characterized in \cite{morgenshtern2016super} with additional restriction of positive signals. Both distance definitions were empirically examined in \cite{louYX16}  for point-source super-resolution. When extending to 2D sparse gradient, one can compute the gradient norm at each pixel, and separation distance can be defined as the distance between any two locations with non-zero gradient norm. To the best of our knowledge, there is no analysis on the exact recovery of sparse gradients, no matter whether it is in 1D or 2D. In \Cref{sec:toy_example},
we devote   some empirical evidences, showing  a similar relationship between sparse gradient recovery and minimum separation as Theorem~\ref{thm:MS}, which calls for a theoretical justification in the future. Once the extension from 1D sparse vectors to 1D sparse gradients is established, it is expected that the analysis can be applied to sparse gradients in 2D to facilitate theoretical analysis in imaging applications.}

	\section{The proposed approach}
	\label{sect:model}
	
Starting from \eqref{eq:grad_con}, we  incorporate an additional box constraint in the model, i.e.,
	\begin{eqnarray}\label{eq:grad_con_l1l2_box}
	&&	\min_{\h u} \frac{\| D\h u \|_1}{\|D\h u \|_2} \quad \mathrm{s.t.} \quad A\h u=\h b,\  \h u\in \tp{[p,q]^N}.
	\end{eqnarray}
		The notation \tp{$\h u\in [p,q]^N$} means that every element of $\h u$ is bounded by $[p,q].$
The box constraint  is reasonable
for image processing applications \cite{chan2012multiplicative,kk_bound_const}, since pixel values are usually bounded by $[0,1]$ or $[0, 255]$. 
On the other hand,	the box constraint is particularly helpful  for the $L_1/L_2$ model to prevent its divergence \cite{L1dL2_accelerated}.

We use the indicator function to rewrite \eqref{eq:grad_con_l1l2_box} into the following equivalent form
	\begin{equation}\label{equ:split_model_box}
	\min_{\h u,\h h}  \frac{\| D\h  u \|_1}{\| \h h \|_2} +\Pi_{A\h u=\h b}(\h u)+ \Pi_{\tp{[p,q]^N}}(\h u) \quad \mathrm{s.t.} \quad  D\h  u = \h h,
	\end{equation}
where $\Pi_{S}(\h t)$ denotes the indicator function that forces $\h t$ to belong to a feasible set $S$, i.e.,
	\begin{equation}\label{eq:indicator}
	\Pi_S(\h t) =
	\begin{cases}
	0	&	 \mbox{if } \h t \in S
	\\
	+\infty	&	\text{otherwise}.
	\end{cases}
	\end{equation}
The  augmented Lagrangian function corresponding to \eqref{equ:split_model_box} can be expressed as,
	\begin{equation}\label{eq:augL4con}
	\mathcal{L}(\h u, \h h;\h g) =\frac{\| D\h u \|_1}{\| \h h \|_2}+\Pi_{A\h u=\h b}(\h u)+\Pi_{[p,q]^N}(\h u)+ \langle \rho \h g,D\h u -\h h\rangle + \frac{\rho}{2}\|D\h u -  \h h \|_2^2,
	\end{equation}
	where $\h g$ is a dual variable and $\rho$ is a positive parameter. 
	Then  ADMM iterates as follows,
	\begin{equation} \label{ADMML1overL2}
	\left\{\begin{array}{l}
\h	u^{(k+1)}=\arg\min_{\h u} \mathcal{L} (\h u, \h h^{(k)};\h g^{(k)})\\
	\h h^{(k+1)}=\arg\min_{\h h} \mathcal{L} (\h u^{(k+1)}, \h h;\h g^{(k)})\\
	\h g^{(k+1)} = \h  g^{(k)} + D\h u^{(k+1)} - \h h^{(k+1)}.
	\end{array}\right.
	\end{equation}
	The update for $\h h$ is the same as in \cite{l1dl2}, which has a closed-form solution of
	\begin{equation}\label{eq:update-h}
	\h h^{(k+1)} =
	\begin{cases}
	\tau^{(k)}  (D\h u^{(k+1)} + \h g^{(k)}) &  \mbox{if } D\h u^{(k+1)} + \h g^{(k)} \neq \h 0\\
	\h r^{(k)}	& \text{otherwise},
	\end{cases}
	\end{equation}
	where  $\h r^{(k)}$ is a random vector with the $L_2$ norm being $\sqrt[3]{\frac{\|D\h u^{(k+1)}\|_1}{\rho}}$ and $\tau^{(k)} = \frac{1}{3} + \frac{1}{3}(\xi^{(k)} + \frac{1}{\xi^{(k)}})$ for
	\begin{equation*}\label{eq:update_root}
		\xi^{(k)} = \sqrt[3]{\frac{27\eta^{(k)} + 2 + \sqrt{(27\eta^{(k)}+2)^2 - 4}}{2} } \quad\mbox{and}\quad \eta^{(k)} = \frac{\|D\h u^{(k+1)}\|_1}{\rho \|D\h u^{(k+1)} + \h g^{(k)}\|_2^3}.
	\end{equation*}	
	
We elaborate on the $\h u$-subproblem in \eqref{ADMML1overL2}, which can be expressed by the box constraint, i.e.,
	\begin{equation}\label{eq:u-update4box}
\h 	u^{(k+1)} = \arg\min_{\h u} \frac{\|D\h u \|_1}{\| \h h^{(k)} \|_2}+   \frac{\rho}{2}\| D\h  u-\h h^{(k)} +\h g^{(k)}\|_2^2 \quad \mathrm{s.t.} \quad A\h  u=\h  b, \h u\in[p,q]^N.
	\end{equation}
To solve for \eqref{eq:u-update4box}, we introduce two variables, $\h  v$ for the box constraint and $\h d$ for the gradient, thus  getting
	\begin{equation}\label{eq:u-update4box-split}
	\min_{\h u,\h d,\h v} \frac{\|\h d \|_1}{\| \h h^{(k)} \|_2}+   \frac{\rho}{2}\|  D\h  u-\h h^{(k)} +\h g^{(k)}\|_2^2 +\Pi_{\tp{[p,q]^N}}(\h v) \quad \mathrm{s.t.} \quad  \h u =\h  v,  D\h  u= \h d, A\h u=\h b.
	\end{equation}
	The  augmented Lagrangian function corresponding to \eqref{eq:u-update4box-split} becomes 
	\begin{equation}\label{eq:grad_aug_l1_box}
	\begin{split}
	\mathcal{L}^{(k)}(\h u,\h d, \h v; \h w, \h y,\h  z)=& \frac{\|\h d\|_1}{\|\h h^{(k)}\|_2}  +\frac{\rho}{2} \|D\h  u-\h h^{(k)} +\h g^{(k)} \|_2^2  + \Pi_{\tp{[p, q]^N}}(\h v)  \\
		& + \langle \beta \h w, \h u-\h v\rangle + \frac \beta 2 \|\h u-\h v\|_2^2\\
		& + \langle \gamma \h y,D\h  u -\h d\rangle + \frac{\gamma}{2}\| D\h  u -\h d \|_2^2\\
	& +\langle \lambda \h z, A\h u-\h b\rangle +\frac{\lambda}{2}\|A\h u -\h  b\|_2^2,	
	\end{split}
	\end{equation}
	where $\h w, \h y,\h z$ are dual variables and $\beta, \gamma,  \lambda$ are positive parameters. Here we have $k$ in the superscript of $\mathcal L$ to indicate that it is the Lagrangian for the $\h u$-subproblem in \eqref{ADMML1overL2} at the $k$-th iteration.
	The ADMM framework to minimize \eqref{eq:u-update4box-split} leads to
	\begin{equation}\label{ADMM_l1_argmin_con}
	\left\{\begin{array}{l}
	\h u_{j+1} =\arg\min_{\h u} \mathcal{L} ^{(k)}(\h u, \h d_j, \h v_j;\h w_j, \h y_j, \h z_j)\\
	\h d_{j+1} = \arg\min_{\h d} \mathcal{L} ^{(k)}(\h u_{j+1}, \h d, \h v_j;\h w_j, \h y_j, \h z_j) \\
	\h v_{j+1} =\arg\min_{\h v} \mathcal{L} ^{(k)}(\h u_{j+1}, \h d_{j+1}, \h v;\h w_j, \h y_j, \h z_j)\\
\h w_{j+1} = \h w_j + \h u_{j+1} - \h v_{j+1}\\
	\h y_{j+1} = \h y_{j} + D \h u_{j+1} - \h d_{j+1}\\
	\h z_{j+1} =\h z_j +A\h u_{j+1} - \h b,
	\end{array}\right.
	\end{equation}
	where the subscript $j$ represents the inner loop index, as opposed to
	the superscript $k$ for outer iterations  in \eqref{ADMML1overL2}.  	By taking derivative of $\mathcal{L}^{(k)}$ with respect to $\h u$, we obtain a closed-form solution given by
	\begin{equation}\label{eq:u_con_sub_box}
	\begin{split}
\h 	u_{j+1} &= \Big(\lambda A^TA+ (\rho+\gamma)D^TD+\beta I\Big)^{-1}\Big(\lambda A^T (\h b-\h z_j) \\
	& \quad + \gamma D^T(\h d_j -\h y_{j})   +\rho D^T(\h h^{(k)}-\h g^{(k)})+ \beta (\h v_{j}-\h w_{j})\Big),
	\end{split}
	\end{equation}
	where $I$ stands for the identity matrix.
	When the matrix $A$ involves frequency measurements, e.g. in super-resolution and MRI reconstruction, the update in \eqref{eq:u_con_sub_box} can be implemented efficiently by the fast Fourier transform (FFT) \tp{for periodic boundary conditions when defining the derivative operator $D$ in \eqref{eq:gradient}.}	For a general system matrix $A$, we adopt the conjugate gradient descent iterations \cite{Optimization_2006book_Nocedal} to solve for \eqref{eq:u_con_sub_box}.

	The $\h d$-subproblem in \eqref{ADMM_l1_argmin_con} also has a closed-form solution, i.e.,
	\begin{equation}\label{ADMM_l1con_d}
	\h d_{j+1} = \mathbf{ shrink}\left(D\h u_{j+1} + \h y_{j}, \frac{1}{\gamma \|\h h^{(k)} \|_2}\right),
	\end{equation}
	where  $\mathbf{shrink}(\h x, \mu) = \mathrm{sign}(\h x)\odot \max\left\{|\h x|-\mu, 0\right\}$ is called \textit{soft shrinkage} and $\odot$ denotes element-wise multiplication. We update $\h v$ by a projection   onto the $[p,q]$-box constraint,  which is given by
$$
	\h 	v_{j+1}=\min\left\{\max\{\h u_{j+1}+\h w_{j}, p\}, q\right\}.
$$


In summary, we present an ADMM-based  algorithm to minimize the $L_1/L_2$ on the gradient subject to a linear system with   the box constraint
in \Cref{alg:l1dl2_box}. 
If we only run one iteration of the $\h u$-subproblem \eqref{ADMM_l1_argmin_con}, the overall ADMM iteration \eqref{ADMML1overL2}  is equivalent to  the previous approach \cite{l1dl2}. 

\begin{algorithm}[t]
		\caption{The $L_1/L_2$ minimization on the gradient}
		\label{alg:l1dl2_box}
		\begin{algorithmic}[1]
			\STATE{Input: a linear operator $A$, observed data $\h b$,  and a bound  $[p,q]$ for the original image}
			\STATE{Parameters: $\rho, \lambda,\gamma, \beta$, kMax, jMax,  and  $\epsilon \in \mathds{R}$}
			\STATE{Initialize: $ \h h, \h d, \h g, \h v, \h w, \h y, \h z=\h 0$,   and $k,j = 0$}
			
			\WHILE{$k < $ kMax or $\|\h u^{(k)}-\h u^{(k-1)}\|_2/\|\h u^{(k)}\|_2 > \epsilon$}
			
			\WHILE{$j < $ jMax or $\|\h u_{j}-\h u_{j-1}\|_2/\|\h u_{j}\|_2 > \epsilon$}
			\STATE{$\h u_{j+1} = (\lambda A^TA+(\rho+\gamma)D^TD + \beta I)^{-1}(\lambda A^T (\h b-\h z_j) + \gamma D^T(\h d_j -\h y_{j})  $  \\ $ \quad \quad \quad  +\rho D^T(\h h^{(k)}-\h g^{(k)})+\beta (\h v_j-\h w_j))$
			}
			\STATE{$\h d_{j+1} = \mathbf{ shrink}\left(D\h u_{j+1} + \h y_{j}, \frac{1}{\gamma \|\h h^{(k)} \|_2}\right)$}
				\STATE{$
			\h v_{j+1}=\min\left\{\max\{\h u_{j+1}+\h w_{j}, p\}, q\right\}$}
			\STATE{$
			\h w_{j+1} = \h w_{j} + \h u_{j+1} - \h v_{j+1}  
			$ }
			\STATE{$\h y_{j+1} = \h y_{j}+ D \h u_{j+1} - \h d_{j+1} $}
\STATE{$\h z_{j+1} = \h z_{j} +  A \h u_{j+1}- \h b$ }
			\STATE{$j = j + 1$}
			\ENDWHILE
			\RETURN $\h u^{(k+1)} = \h u_j$
\STATE{\tp{Update $\h h^{(k+1)}$ by \eqref{eq:update-h}. }}
			\STATE $\h g^{(k+1)} = \h  g^{(k)} + D\h u^{(k+1)} - \h h^{(k+1)} $
			\STATE{$k = k+1$ and $j = 0$}
			\ENDWHILE
			\RETURN $\h u^\ast = \h u^{(k)}$ \end{algorithmic}
	\end{algorithm}

	\section{Convergence analysis}\label{sect:conv}

We intend to establish the convergence of  \Cref{alg:l1dl2_box} with the box constraint, which is extensively tested in the experiments. 
Since our  ADMM framework \eqref{ADMML1overL2}  share the same structure with the unconstrained formulation,
we adapt some analysis in
\cite{wang2020limitedct} to prove the subsequential convergence for  the proposed model \eqref{eq:grad_con_l1l2_box}. 
	For example, we make the same assumptions as in \cite{wang2020limitedct},
			\begin{assumption}\label{assmp}
		 ${\cal N}(D)\bigcap{\cal N}(A)=\{\h 0\}$, 
		where $\cal N$ denotes the null space and $D$ is defined in \eqref{eq:gradient}. In addition, the norm of $\{ \h h^{(k)}\}$ generated by \eqref{ADMML1overL2} has a  lower bound, i.e., there exists a positive constant $\epsilon$ such that $\|\h h^{(k)}\|_2\geq \epsilon, \ \forall k$. 
	\end{assumption} 
\tp{\remark{We  have  $\|\h h\|_2>0$ in the $L_1/L_2$ model as the denominator shall not be zero. It is true that $\|\h h\|_2>0$   does not imply a uniform lower bound of $\epsilon$ such that  $\|\h h\|_2>\epsilon$ in Assumption 1. Here we can redefine the divergence of an algorithm by including the case of $\|\h h^{(k)}\|_2< \epsilon$, which can be checked numerically with a pre-set value of $\epsilon$.}}
	
Unlike the unconstrained model \eqref{equ:split_model_uncon}, the \tp{strong} convexity of $\mathcal{L}(\h u, \h h, \h g)$ with respect to $\h u$ does not hold due to the indicator function $\Pi_{A\h u=\h b}(\h u)$. Besides, we have additional dual variable $\h w$ which is not in the unconstrained model. To avoid redundancies to  \cite{wang2020limitedct}, we focus on the different strategies to the unconstrained case, such as optimality conditions and subgradient of the indicator function, when proving convergence for the constrained problem, e.g., in Lemmas \ref{lem:suff_decr}-\ref{thm:stantionary_con} and \Cref{theo:box}.



	\begin{lemma}{(sufficient descent)}\label{lem:suff_decr}
		Under \Cref{assmp},
		the sequence $\{\h u^{(k)},\h h^{(k)},\h g^{(k)}\}$  generated by 
		\eqref{ADMML1overL2}
		satisfies 
		\begin{equation}\label{ineq:lem_suff_decr}
		\mathcal{L}(\h u^{(k+1)}, \h h^{(k+1)};\h g^{(k+1)})\le \mathcal{L}(\h u^{(k)}, \h h^{(k)}; \h g^{(k)})-c_1\|\h u^{(k+1)}-\h u^{(k)}\|_2^2-c_2\|\h h^{(k+1)} - \h h^{(k)}\|_2^2,
		\end{equation}
		where  $c_1$ and $c_2$ are two positive constants for a sufficiently large $\rho$.
	\end{lemma}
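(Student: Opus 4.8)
The plan is to establish the sufficient-decrease inequality by splitting the one-step change of the augmented Lagrangian across the three updates in the ADMM iteration \eqref{ADMML1overL2} and bounding each piece. First I would write
\[
\mathcal{L}(\h u^{(k+1)},\h h^{(k+1)};\h g^{(k+1)}) - \mathcal{L}(\h u^{(k)},\h h^{(k)};\h g^{(k)})
\]
as the telescoping sum of three differences: the change from the $\h u$-update, the change from the $\h h$-update, and the change from the $\h g$-update. For the $\h u$-update, since $\h u^{(k+1)}$ minimizes $\mathcal{L}(\cdot,\h h^{(k)};\h g^{(k)})$ and this function is (after removing the indicator terms, which are unaffected on the feasible set) a quadratic in $D\h u$ with Hessian $\rho D^TD$, I would use the standard minimizer-with-quadratic-growth argument: combined with \Cref{assmp}, i.e.\ ${\cal N}(D)\cap{\cal N}(A)=\{\h 0\}$, the feasible restriction $\{A\h u=\h b\}$ forces a genuine strongly-convex behavior of $\rho\|D(\cdot)\|_2^2$ along feasible directions, giving a decrease of at least $c\|\h u^{(k+1)}-\h u^{(k)}\|_2^2$ for some $c>0$ proportional to $\rho$. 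For the $\h h$-update, $\h h^{(k+1)}$ is the exact minimizer of $\mathcal{L}(\h u^{(k+1)},\cdot;\h g^{(k)})$; although this subproblem is nonconvex, the closed-form solution \eqref{eq:update-h} together with \Cref{assmp}'s lower bound $\|\h h^{(k)}\|_2\ge\epsilon$ yields a decrease controlled by $\|\h h^{(k+1)}-\h h^{(k)}\|_2^2$ — this is exactly the estimate already carried out in \cite{wang2020limitedct}, which I would cite.

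The third piece, the dual update, is the one that can \emph{increase} $\mathcal{L}$, and controlling it is the crux. Using $\h g^{(k+1)}-\h g^{(k)} = D\h u^{(k+1)}-\h h^{(k+1)}$, the change from the $\h g$-step equals $\rho\|D\h u^{(k+1)}-\h h^{(k+1)}\|_2^2 = \rho\|\h g^{(k+1)}-\h g^{(k)}\|_2^2$. The standard remedy is to bound $\|\h g^{(k+1)}-\h g^{(k)}\|_2$ by the primal increments. From the optimality condition of the $\h h$-subproblem one gets $\rho(\h h^{(k)} - \h h^{(k+1)} - \h g^{(k)}) = \rho(\h g^{(k)} - \h g^{(k+1)}) \in \partial_{\h h}\big(\|D\h u^{(k+1)}\|_1/\|\cdot\|_2\big)(\h h^{(k+1)})$, so $\rho\h g^{(k+1)}$ is expressed through the gradient of the (smooth, away from $\h 0$) map $\h h\mapsto \|D\h u^{(k+1)}\|_1/\|\h h\|_2$; using its Lipschitz continuity on $\{\|\h h\|_2\ge\epsilon\}$ and the boundedness of $\|D\h u^{(k+1)}\|_1$ (which follows because $\mathcal L$ is bounded below along the iterates once the descent is in place — a point to handle carefully, possibly by an a priori argument), one bounds $\|\h g^{(k+1)}-\h g^{(k)}\|_2 \lesssim \|\h h^{(k+1)}-\h h^{(k)}\|_2 + \|\h u^{(k+1)}-\h u^{(k)}\|_2$. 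Substituting back, the $\rho\|\h g^{(k+1)}-\h g^{(k)}\|_2^2$ term is absorbed into the negative quadratic terms from the $\h u$- and $\h h$-steps \emph{provided $\rho$ is large enough}, which is where the ``sufficiently large $\rho$'' hypothesis enters and fixes $c_1,c_2>0$.

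The main obstacle I anticipate is making the dual-increment bound rigorous: the subdifferential of $\|D\h u\|_1/\|\h h\|_2$ in $\h h$ must be handled with the correct Lipschitz modulus on the region $\|\h h\|_2\ge\epsilon$, and the coefficient $\|D\h u^{(k+1)}\|_1$ varies with $k$, so one needs a uniform bound on it — this forces a bootstrap between ``$\mathcal L$ is bounded below / nonincreasing'' and ``iterates stay in a compact region,'' which must be set up without circularity (e.g.\ by first noting $\mathcal{L}(\h u^{(k)},\h h^{(k)};\h g^{(k)}) = \|D\h u^{(k)}\|_1/\|\h h^{(k)}\|_2 + \tfrac{\rho}{2}\|\h h^{(k)}\|_2^2 - \tfrac{\rho}{2}\|\h g^{(k)}+ \text{stuff}\|_2^2$-type reorganizations to see coercivity). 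The indicator terms $\Pi_{A\h u=\h b}$ and $\Pi_{[p,q]^N}$ are harmless here since feasibility of $\h u^{(k+1)}$ is maintained by construction, so they contribute $0$ throughout and only reappear later, in the stationarity lemma, through their normal cones.
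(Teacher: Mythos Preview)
Your approach is correct and matches the paper's: telescope $\mathcal{L}$ across the three updates, get the $\h u$-step decrease from strong convexity of $\tfrac{\rho}{2}\|D\cdot\|_2^2$ along feasible directions (the paper makes this explicit via KKT multipliers $\h w_1,\h w_2,\h w_3$, complementary slackness, and the subgradient inequality for $\|D\cdot\|_1$, combined with \Cref{assmp} to obtain $\|D(\h u^{(k+1)}-\h u^{(k)})\|_2^2\ge\sigma\|\h u^{(k+1)}-\h u^{(k)}\|_2^2$ since $A\h u^{(k+1)}=A\h u^{(k)}=\h b$), and defer the $\h h$- and $\h g$-estimates to \cite[Lemma~4.3]{wang2020limitedct}.

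The ``main obstacle'' you anticipate, however, is a non-issue here and you are overcomplicating it. You do not need any bootstrap or coercivity argument to bound $\|D\h u^{(k+1)}\|_1$ uniformly: the model \eqref{eq:grad_con_l1l2_box} carries the box constraint $\h u\in[p,q]^N$, and since the analysis assumes the $\h u$-subproblem in \eqref{ADMML1overL2} is solved exactly, every iterate $\h u^{(k)}$ is feasible and hence lies in $[p,q]^N$. This gives $M=\sup_k\|D\h u^{(k)}\|<\infty$ \emph{a priori}, with no circularity; indeed, this is precisely the reason the box constraint is built into the model (and the paper later quotes explicit constants $c_1,c_2$ in terms of $M$, $\sigma$, $\epsilon$, $\rho$). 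Your final remark that the indicators ``only reappear later through their normal cones'' is slightly off for the same reason: in the $\h u$-step the paper does use the constraint structure---the multipliers $\h w_2,\h w_3$ for the box and $\h w_1$ for $A\h u=\h b$ appear in the optimality condition and are needed to cancel the corresponding inner products when applying the subgradient inequality.
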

	
\begin{proof}
Denote $\sigma$ as the smallest eigenvalue of the matrix $A^T A+D^T D.$  We show $\sigma$ is strictly positive. If $\sigma=0,$ there exists \tp{a nonzero} vector $\h x$ such that  $\h x^T(A^T A+D^T D) \h x = 0$. It is straightforward that $\h x^T A^T A \h x\ge0$ and $\h x^T D^T D\h x\ge0$, so one shall have $\h x^T A^T A \h x=0$ and $\h x^T D^T D\h x=0$, which contradicts ${\cal N}(D)\bigcap{\cal N}(A)=\{\h 0\}$ in \Cref{assmp}. Therefore, there exists a positive $\sigma>0$ such that
		\[
		\h v^T(A^T A + D^TD) \h v \geq  \sigma \|\h v\|_2^2, \quad \forall \h v.
		\]
		By letting $\h v=\h u^{(k+1)}-\h u^{(k)}$ and using $A\h u^{(k+1)}=A\h u^{(k)}=\h b$, we have
\begin{equation}\label{ineq:grad_bound}
   		\|D (\h u^{(k+1)}-\h u^{(k)})\|_2^2\geq \sigma\|\h u^{(k+1)}-\h u^{(k)}\|_2^2. 
\end{equation}

  We express the $\h u$-subproblem in \eqref{ADMML1overL2} equivalently as
		\begin{align*}
	\h	u^{(k+1)} = \arg&\min_{\h u} \frac{\| D\h u \|_1}{\| \h h^{(k)} \|_2}+  \frac{\rho}{2}\|  D \h u - \h h^{(k)}+\h g^{(k)}\|_2^2 \\ &\mathrm{s.t.} \quad A\h u = \h b, p-\h u\leq \h 0,  \mathrm{and} \ \h u- q\leq \h 0.
		\end{align*}
	 The optimality conditions state that  $A\h u^{(k+1)}=\h b$ and there exist three sets of  vectors $\h w_i (i=1,2,3)$  such that
		\begin{equation}\label{eq:sub-grad-l14lem}
	\h	0\tp{=}	\frac  {\h p^{(k+1)}}{\| \h h^{(k)} \|_2} + \rho D^T(D\h u^{(k+1)}- \h h^{(k)}+\h g^{(k)})+ A^T \h w_1 - \h w_2+\h w_3,
		\end{equation}
		with $\h p^{(k+1)}\in\partial\|D\h  u^{(k+1)}\|_1$. 
By the complementary slackness, we have $\h w_2,\h w_3\geq \h 0$ and 
\begin{equation}\label{eq:slack}
  (p-\h u^{(k+1)})\odot \h w_2  =(\h u^{(k+1)}-q)\odot \h w_3=\h 0,  
\end{equation}
which also holds for $\h u^{(k)}.$ Using the definition of subgradient, $A\h u^{(k+1)}=A\h u^{(k)}=\h b$, and  \eqref{ineq:grad_bound}-\eqref{eq:slack}, we obtain that
		\begin{equation*}
		\begin{split}
		& \mathcal{L}(\h u^{(k+1)}, \h h^{(k)}; \h g^{(k)})- \mathcal{L}(\h u^{(k)}, \h h^{(k)}; \h g^{(k)})\\
		\leq &  \langle\frac{\h p^{(k+1)}}{\| \h h^{(k)} \|_2}, \h u^{(k+1)}-\h u^{(k)} \rangle+ \frac {\rho} 2 \|D \h u^{(k+1)}-\tp{\h f^{(k)}}\|_2^2-\frac {\rho} 2 \|D \h u^{(k)}-\tp{\h f^{(k)}}\|_2^2\\
		= & -\left\langle  \h w_1, A\h u^{(k+1)}-A\h u^{(k)}\right\rangle +\left\langle \h w_2, \h u^{(k+1)}- \h u^{(k)}\right\rangle- \tp{\langle\h w_3, \h u^{(k+1)}-\h u^{(k)}\rangle}\\
		& -\rho\langle D\h u^{(k+1)}-\tp{\h f^{(k)}}, D\h u^{(k+1)}-D\h u^{(k)}\rangle+ \frac {\rho} 2 \|D\h u^{(k+1)}-\tp{\h f^{(k)}}\|_2^2-\frac {\rho} 2 \|D\h u^{(k)}-\tp{\h f^{(k)}}\|_2^2\\
		= &-\frac {\rho} 2 \|D\h u^{(k+1)}-D\h u^{(k)}\|_2^2\leq - \frac {\sigma\rho} 2 \|\h u^{(k+1)}-\h u^{(k)}\|_2^2,
		\end{split}
		\end{equation*}
		where \tp{$\h f^{(k)} = \h h^{(k)}-\h g^{(k)}$}.
	The  bounds of $\mathcal{L}\left(\h u^{(k+1)}, \h h^{(k+1)}; \h g^{(k)}\right)-\mathcal{L}\left(\h u^{(k+1)}, \h h^{(k)}; \h g^{(k)}\right)$ 
	and $\mathcal{L}\left(\h u^{(k+1)}, \h h^{(k+1)}; \h g^{(k+1)}\right)-\mathcal{L}\left(\h u^{(k+1)}, \h h^{(k+1)}; \h g^{(k)}\right)$ exactly follow  \cite[Lemma 4.3]{wang2020limitedct} 
	for the unconstrained formulation,
	and hence we omit the rest of the proof.
\end{proof}	

\tp{\remark{\Cref{lem:suff_decr} requires $\rho$ to be   sufficiently large so that two parameters $c_1$ and $c_2$ are positive. Following the proof of \cite[Lemma 4.3]{wang2020limitedct}, $c_1$ and $c_2$  can be explicitly expressed as 
\begin{equation}
    c_1 = \frac{\sigma \rho}{2}-\frac{16\tp{N}}{\rho \epsilon^4} \quad \text{ and }\quad 
 c_2=\frac{\rho \epsilon^3-6M}{2\epsilon^3}-\frac{16M^2}{\rho \epsilon^6},\end{equation}
where $M = \sup_k \|D \h u^{(k)}\|_2$. Note that the assumption on $\rho$ is a sufficient condition  to ensure the convergence, and we observe  in practice that a relatively small $\rho$ often yields good performance. }}
	
		\begin{lemma}{(subgradient bound)}\label{thm:stantionary_con}
		Under \Cref{assmp}, there exists  a vector $\bm\eta^{(k+1)} \in \partial {\cal L}(\h u^{(k+1)}, \h h^{(k+1)}; \h g^{(k+1)})$ and a constant $\gamma>0$
		such that
		\begin{eqnarray}\label{eq2}\|\bm\eta^{(k+1)}\|_2^2\le \gamma \left( \|\h h^{(k+1)}-\h h^{(k)}\|_2^2 + \|\h g^{(k+1)}-\h g^{(k)}\|_2^2\right).
		\end{eqnarray}
	\end{lemma}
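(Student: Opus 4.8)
The plan is to construct the subgradient element $\bm\eta^{(k+1)}$ explicitly by writing down the optimality conditions of the two subproblems in \eqref{ADMML1overL2} at iteration $k+1$, and then estimate each resulting term using the update rule for the dual variable $\h g$. First I would address the $\h h$-block: since $\h h^{(k+1)}$ minimizes $\mathcal L(\h u^{(k+1)},\cdot;\h g^{(k)})$, which is differentiable in $\h h$ away from $\h 0$ (and $\|\h h^{(k+1)}\|_2\geq\epsilon>0$ by \Cref{assmp}), we get $\h 0 = -\frac{\|D\h u^{(k+1)}\|_1}{\|\h h^{(k+1)}\|_2^3}\h h^{(k+1)} - \rho\h g^{(k)} + \rho(\h h^{(k+1)} - D\h u^{(k+1)})$. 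Comparing this with the partial subgradient of $\mathcal L$ in $\h h$ evaluated at $(\h u^{(k+1)},\h h^{(k+1)};\h g^{(k+1)})$, the discrepancy is exactly $\rho(\h g^{(k+1)}-\h g^{(k)})$, which will contribute a term bounded by a constant times $\|\h g^{(k+1)}-\h g^{(k)}\|_2^2$.

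Next I would handle the $\h u$-block. From the optimality condition \eqref{eq:sub-grad-l14lem} in the proof of \Cref{lem:suff_decr}, we already have $\h 0 = \frac{\h p^{(k+1)}}{\|\h h^{(k)}\|_2} + \rho D^T(D\h u^{(k+1)} - \h h^{(k)} + \h g^{(k)}) + A^T\h w_1 - \h w_2 + \h w_3$ with $\h p^{(k+1)}\in\partial\|D\h u^{(k+1)}\|_1$ and $\h w_2,\h w_3\ge\h 0$ satisfying the complementary slackness \eqref{eq:slack}. The partial subgradient of $\mathcal L$ in $\h u$ at $(\h u^{(k+1)},\h h^{(k+1)};\h g^{(k+1)})$ has the form $\frac{\h p^{(k+1)}}{\|\h h^{(k+1)}\|_2} + \rho D^T(D\h u^{(k+1)} - \h h^{(k+1)} + \h g^{(k+1)}) + (\text{subgradient of }\Pi_{A\h u=\h b}) + (\text{subgradient of }\Pi_{[p,q]^N})$. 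The key observation is that $A^T\h w_1$ lies in the subdifferential of $\Pi_{A\h u=\h b}$ at any feasible point, and $-\h w_2 + \h w_3$ (with the slackness conditions, which also hold at $\h u^{(k+1)}$) lies in the subdifferential of $\Pi_{[p,q]^N}(\h u^{(k+1)})$ — this is precisely the place where the indicator-function machinery, rather than strong convexity, is used. Subtracting \eqref{eq:sub-grad-l14lem} from this expression leaves a residual consisting of (a) $\h p^{(k+1)}\big(\frac{1}{\|\h h^{(k+1)}\|_2}-\frac{1}{\|\h h^{(k)}\|_2}\big)$, controlled by $\|\h h^{(k+1)}-\h h^{(k)}\|_2$ via the mean value / reverse triangle inequality together with the uniform lower bound $\epsilon$ and the uniform bound $M=\sup_k\|D\h u^{(k)}\|_2$ on $\|\h p^{(k+1)}\|_2$; (b) $\rho D^T\big(-(\h h^{(k+1)}-\h h^{(k)}) + (\h g^{(k+1)}-\h g^{(k)})\big)$, directly of the desired form. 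Collecting the $\h u$-component and the $\h h$-component into one vector $\bm\eta^{(k+1)}$, squaring, and using $(a+b)^2\le 2a^2+2b^2$ repeatedly yields \eqref{eq2} with an explicit $\gamma$ depending on $\rho$, $\|D\|$, $\epsilon$, and $M$.

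The main obstacle I anticipate is the careful bookkeeping of the subdifferential of the two indicator functions: one must verify that the multiplier combination $A^T\h w_1 - \h w_2 + \h w_3$ produced by the KKT system of the constrained $\h u$-subproblem genuinely belongs to $\partial\big(\Pi_{A\h u=\h b} + \Pi_{[p,q]^N}\big)(\h u^{(k+1)})$ — i.e., that the normal cones add up correctly (a constraint-qualification / sum-rule point) and that the slackness conditions \eqref{eq:slack} are exactly what is needed for $-\h w_2+\h w_3$ to be a normal-cone element at $\h u^{(k+1)}$. Once that identification is in place, the remaining estimates are routine and essentially parallel to \cite[Lemma 4.4]{wang2020limitedct}; indeed I would state that the bounds on the terms already appearing in the unconstrained analysis follow verbatim from there, and present in full only the new terms arising from $A^T\h w_1$ and $-\h w_2+\h w_3$.
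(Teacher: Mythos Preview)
Your approach is essentially the paper's: build $\bm\eta^{(k+1)}$ from the optimality conditions of the $\h u$- and $\h h$-subproblems, recognize $A^T\h w_1\in\partial\Pi_{A\h u=\h b}(\h u^{(k+1)})$ and $-\h w_2+\h w_3\in\partial\Pi_{[p,q]^N}(\h u^{(k+1)})$, subtract \eqref{eq:sub-grad-l14lem}, and bound the residual. Two small corrections are needed: (i) the uniform bound on $\|\h p^{(k+1)}\|_2$ is not $M=\sup_k\|D\h u^{(k)}\|_2$ (that quantity bounds $\|D\h u^{(k)}\|_1$, not the subgradient); the paper instead uses the chain rule $\h p^{(k+1)}=D^T\h q^{(k+1)}$ with $\|\h q^{(k+1)}\|_\infty\le 1$ to get $\|\h p^{(k+1)}\|_2\le 4\sqrt{N}$; (ii) you must also include the $\h g$-component $\bm\eta_3^{(k+1)}=\rho(D\h u^{(k+1)}-\h h^{(k+1)})=\rho(\h g^{(k+1)}-\h g^{(k)})$, which the paper treats explicitly before collecting $\bm\eta^{(k+1)}=(\bm\eta_1^{(k+1)},\bm\eta_2^{(k+1)},\bm\eta_3^{(k+1)})$.
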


	\begin{proof}
We define
		\begin{eqnarray*}\label{con:equ:sym2}
		\bm{\eta}_1^{(k+1)} :=  \frac{\h p^{(k+1)}}{\|\h h^{(k+1)}\|_2}+A^T \h w_1 - \h w_2+\h w_3+\rho D^T(D\h  u^{(k+1)}-\h h^{(k+1)}+\h  g^{(k+1)}).
		\end{eqnarray*}
Clearly by the subgradient definition, we can prove that $A^T\h w_1\in \partial \Pi_{A\h u=\h b}(\h u^{(k+1)})$ and $\h w_3-\h w_2\in \partial \Pi_{\tp{[p,q]^N}}(\h u^{(k+1)}), $ 
which implies that $\bm\eta_1^{(k+1)}  \in \partial_{\h u}{\cal L}(\h u^{(k+1)},\h h^{(k+1)},\h g^{(k+1)}).$ 
		Combining the definition of $\bm{\eta_1}^{(k+1)}$ with \eqref{eq:sub-grad-l14lem} leads to
		\begin{equation}    \label{equ:eta_bound}
		\bm	\eta_1^{(k+1)}= -\frac{\h p^{(k+1)}}{\|\h h^{(k)}\|_2}+\frac{\h p^{(k+1)}}{\|\h h^{(k+1)}\|_2}+\rho D^T \left(\h h^{(k)}-\h h^{(k+1)}\right)+\rho D^T\left(\h g^{(k+1)}-\h g^{(k)}\right).
		\end{equation}
\tp{To estimate an  upper bound of $\|\bm	\eta_1^{(k+1)}\|_2,$ we apply the chain rule of sub-gradient, i.e., $\partial \|D \h u\|_1 =D^T \h q$, where $\h q = \{\h q\ | \ \langle \h q, D\h u \rangle=\|D \h u\|_1, \|\h q\|_\infty\leq 1\},$ thus leading to $\|\h p^{(k+1)}\|_2\leq \|D^T\|_2\|\h q^{(k+1)}\|_2 \leq 4 \sqrt{N}$. Therefore, we have
\begin{equation*}
        \left\|\frac{\h p^{(k+1)}}{\|\h h^{(k+1)}\|_2} - \frac{\h p^{(k+1)}}{\|\h h^{(k)}\|_2} \right\|_2\leq \frac{1}{\epsilon^2} \| \h h^{(k+1)}-\h h^{(k)}\|_2\|\h p^{(k+1)}\|_2\leq \frac{4\sqrt{\tp{N}}}{\epsilon}\|\h h^{(k+1)}-\h h^{(k)}\|_2. 
\end{equation*} 
It further follows from \eqref{equ:eta_bound} that 
\begin{equation}
    \|\bm	\eta_1^{(k+1)}\|_2 \leq (\frac{4\sqrt{N}}{\epsilon}+2\sqrt{2}\rho ) \left\|\h h^{(k)}-\h h^{(k+1)}\right\|_2+2\sqrt{2}\rho \left\|\h g^{(k+1)}-\h g^{(k)}\right\|_2.
\end{equation}
}
 
	We can also define $\bm\eta_2^{(k+1)}, \bm\eta_3^{(k+1)}$ such that 
				\begin{equation*}
			\begin{split}
				\bm\eta_2^{(k+1)} & \in \partial_{\h h}{\cal L}(\h u^{(k+1)},\h h^{(k+1)},\h g^{(k+1)})\\
				\bm\eta_3^{(k+1)} & \in \partial_{\h g}{\cal L}(\h u^{(k+1)},\h h^{(k+1)},\h g^{(k+1)}),
			\end{split}
		\end{equation*}
and estimate the upper bounds of 	$\|\bm\eta_2^{(k+1)}\|_2$ and $\|\bm\eta_3^{(k+1)}\|_2.$ 
\tp{By denoting $\bm \eta^{(k+1)} = (\bm \eta_1^{(k+1)}, \bm \eta_2^{(k+1)}, \bm \eta_3^{(k+1)})\in \partial {\cal L}(\h u^{(k+1)}, \h h^{(k+1)}; \h g^{(k+1)}), $ }
the remaining proof is the same as in \cite[Lemma 4.4]{wang2020limitedct}.
	\end{proof}

	
\begin{theorem}\label{theo:box}
		(subsequential convergence)
		Under Assumption 1 and a sufficiently large $\rho$,  the sequence \tp{$\{\h u^{(k)}, \h h^{(k)}, \h g^{(k)} \}$}  generated by
\eqref{ADMML1overL2}
		always has a subsequence  convergent to a \tp{stationary point $(\h u^\ast, \h h^\ast, \h g^\ast)$ of $\mathcal{L}$, namely, $\h 0\in \partial \mathcal{L}(\h u^\ast, \h h^\ast, \h g^\ast)$. } 
	\end{theorem}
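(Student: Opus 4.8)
The plan is to combine the two preceding lemmas in the standard way that converts "sufficient decrease" plus "subgradient bound" into subsequential convergence to a stationary point. First I would establish that the augmented Lagrangian sequence $\{\mathcal{L}(\h u^{(k)},\h h^{(k)};\h g^{(k)})\}$ is bounded below. This is the natural analogue of the boundedness argument in \cite{wang2020limitedct}: using the $\h h$-update optimality and the relation $\h g^{(k+1)}-\h g^{(k)} = D\h u^{(k+1)}-\h h^{(k+1)}$, one rewrites $\mathcal{L}$ so that the inner-product term $\langle \rho\h g^{(k)}, D\h u^{(k)}-\h h^{(k)}\rangle$ is absorbed, and then uses $\|\h h^{(k)}\|_2 \geq \epsilon$ from \Cref{assmp} together with nonnegativity of $\|D\h u\|_1/\|\h h\|_2$, the indicator functions, and the quadratic penalty to get a uniform lower bound. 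Since \Cref{lem:suff_decr} gives monotone decrease, the sequence $\{\mathcal{L}(\h u^{(k)},\h h^{(k)};\h g^{(k)})\}$ converges and, telescoping \eqref{ineq:lem_suff_decr}, we obtain $\sum_k \big(c_1\|\h u^{(k+1)}-\h u^{(k)}\|_2^2 + c_2\|\h h^{(k+1)}-\h h^{(k)}\|_2^2\big) < \infty$, so $\|\h u^{(k+1)}-\h u^{(k)}\|_2 \to 0$ and $\|\h h^{(k+1)}-\h h^{(k)}\|_2 \to 0$.

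Next I would control $\|\h g^{(k+1)}-\h g^{(k)}\|_2$. From the $\h h$-update \eqref{eq:update-h} one reads off a relation between $\h h^{(k+1)}$, $D\h u^{(k+1)}+\h g^{(k)}$ and $\|D\h u^{(k+1)}\|_1$; combined with $\h g^{(k+1)}-\h g^{(k)} = D\h u^{(k+1)}-\h h^{(k+1)}$, this expresses $\h g^{(k+1)}$ in terms of $\h h^{(k+1)}$ and the scalar $\tau^{(k)}$, so that $\|\h g^{(k+1)}-\h g^{(k)}\|_2$ is bounded by a multiple of $\|\h h^{(k+1)}-\h h^{(k)}\|_2$ (this is precisely the estimate underlying \cite[Lemma 4.3--4.4]{wang2020limitedct}, which \Cref{thm:stantionary_con} already invokes). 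Hence $\|\h g^{(k+1)}-\h g^{(k)}\|_2 \to 0$ as well. Boundedness of $\{\h u^{(k)}\}$ follows from the box constraint $\h u^{(k)}\in[p,q]^N$ (here the box constraint does real work, replacing a coercivity argument), boundedness of $\{\h h^{(k)}\}$ follows since $\mathcal{L}$ is bounded and decreasing while the other terms are controlled, and $\{\h g^{(k)}\}$ is bounded because $\h h^{(k)}$ and $D\h u^{(k)}$ are and $\tau^{(k)}$ stays in a bounded range by the formula for $\xi^{(k)}$. Therefore $\{(\h u^{(k)},\h h^{(k)},\h g^{(k)})\}$ has a convergent subsequence with limit $(\h u^\ast,\h h^\ast,\h g^\ast)$.

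Finally I would pass to the limit along that subsequence in \Cref{thm:stantionary_con}: since the right-hand side of \eqref{eq2} tends to $0$, we get $\bm\eta^{(k+1)}\to\h 0$ with $\bm\eta^{(k+1)}\in\partial\mathcal{L}(\h u^{(k+1)},\h h^{(k+1)};\h g^{(k+1)})$. The feasibility $A\h u^\ast=\h b$ and $\h u^\ast\in[p,q]^N$ hold because these constraints are maintained at every iterate and the sets are closed. Using the closedness of the limiting subdifferential $\partial\mathcal{L}$ — which requires knowing $\mathcal{L}$ is lower semicontinuous at the limit, guaranteed by $\|\h h^\ast\|_2\geq\epsilon>0$ so the ratio term is continuous there, and by closedness of the indicator terms — we conclude $\h 0\in\partial\mathcal{L}(\h u^\ast,\h h^\ast;\h g^\ast)$. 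The main obstacle I anticipate is the lower-boundedness of $\mathcal{L}$: the coupling term $\langle\rho\h g,D\h u-\h h\rangle$ is not sign-definite, so one must carefully exploit the $\h h$-subproblem optimality (and the explicit form of the $\h h$-update) to rewrite $\mathcal{L}$ as a sum of manifestly bounded-below quantities; everything else is a routine adaptation of the unconstrained argument in \cite{wang2020limitedct}, with the indicator-function subgradients handled exactly as in the proofs of Lemmas \ref{lem:suff_decr}--\ref{thm:stantionary_con}.
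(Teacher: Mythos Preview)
Your proposal is essentially the same strategy as the paper's proof: combine \Cref{lem:suff_decr} and \Cref{thm:stantionary_con} with a lower bound on $\mathcal{L}$, telescope, establish boundedness of the iterates, extract a convergent subsequence, and pass to the limit using the subgradient bound. All the key ingredients are present.

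One point to tighten: your boundedness chain is circular as written. You argue $\{\h h^{(k)}\}$ bounded from ``$\mathcal{L}$ bounded and the other terms controlled,'' but controlling the term $-\tfrac{\rho}{2}\|\h g^{(k)}\|_2^2$ (after completing the square) needs $\{\h g^{(k)}\}$ bounded, which you then deduce from $\{\h h^{(k)}\}$ bounded via the $\tau^{(k)}$ formula. The paper breaks this loop cleanly: from the $\h h$-optimality condition and the dual update one gets the explicit identity
\[
\h g^{(k+1)}=-\frac{\|D\h u^{(k+1)}\|_1}{\rho\,\|\h h^{(k+1)}\|_2^{3}}\,\h h^{(k+1)},
\]
so $\|\h g^{(k)}\|_2\le M/(\rho\epsilon^2)$ using only the box constraint (to bound $\|D\h u^{(k)}\|_1\le M$) and \Cref{assmp} (for $\|\h h^{(k)}\|_2\ge\epsilon$). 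Boundedness of $\{\h h^{(k)}\}$ then follows from $\h h^{(k+1)}=D\h u^{(k+1)}+\h g^{(k)}-\h g^{(k+1)}$, and the lower bound on $\mathcal{L}$ drops out immediately. With that reordering your argument goes through.
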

	\begin{proof}
		\tp{Since $\h u^{(k)}\in[p,q]^N$ is bounded,} then  $\|D \h u^{(k)}\|_1$ is bounded; i.e., there exists a constant $M>0$ such that  $\|D \h u^{(k)}\|_1\leq M.$
	The optimality condition of the $\h h$-subproblem in \eqref{ADMML1overL2} leads to 
		\begin{equation}\label{eq:opt4hsub}
		-\dfrac{a^{(k+1)}}{\tp{\|\h h^{(k+1)}\|^3_2}}\h h^{(k+1)} + \rho\left(\h h^{(k+1)} -D\h u^{(k+1)}-\h g^{(k)}\right) =\h 0,
		\end{equation}
		where $a^{(k)}:=\|D \h u^{(k)}\|_1$. Using the dual update $- \h g^{(k+1)}=\h h^{(k+1)} -D\h u^{(k+1)}-\h g^{(k)},$  we have
		\begin{equation}
		\h g^{(k+1)}=-\frac{a^{(k+1)}}{\rho}\frac{\h h^{(k+1)}}{\|\h h^{(k+1)}\|^3_2}.
		\end{equation}
Due to $\|\h h^{(k)}\|_2\geq \epsilon$ in \Cref{assmp}, we get
		\[
		\|\h g^{(k)}\|_2=\left\|\frac{\tp{a^{(k)}}}{\rho}
		\frac{\h h^{(k)}}{\tp{\|\h h^{(k)}\|^3_2}}\right\|_2\leq\frac{M}{\rho\epsilon^2},
		\]
	which implies the boundedness of $\{\h g^{(k)}\}$. 	It follows from the $\h h$-update \eqref{eq:update-h} that 
	$\{\h h^{(k)}\}$ is also bounded. 
Therefore, the Bolzano-Weierstrass Theorem guarantees that the sequence  $\{\h u^{(k)},\h h^{(k)},\h g^{(k)}\}$ has a  convergent subsequence,
		denoted by $(\h u^{(k_j)},\h h^{(k_j)},\h g^{(k_j)})\rightarrow (\h u^*,\h h^*,\h g^*),$ as $k_j\rightarrow \infty$.	
		In addition, we can estimate that
		\begin{equation*}
			\begin{split}
				& \mathcal{L}(\h u^{(k)}, \h h^{(k)}; \h g^{(k)})\\ 
				= & \frac{\|D \h u^{(k)} \|_1}{\| \h h^{(k)} \|_2}+\Pi_{A\h u=\h b}(\h u^{(k)})+
				\Pi_{\tp{[p, q]^N}}(\h u^{(k)}) +\frac{\rho}{2}\| \h h^{(k)} - \tp{D} \h u^{(k)}-\h g \|_2^2- \frac{\rho}{2}\|\h g^{(k)}\|_2^2\\
				\geq & \frac{\| D \h u^{(k)} \|_1}{\| \h h^{(k)} \|_2} - \frac{M^2}{2\rho\epsilon^4},
			\end{split}
		\end{equation*}
		which gives a lower bound of ${\cal L}$ owing to the boundedness of $\h u^{(k)}$ and $\h h^{(k)}$.
	It further follows from  \Cref{lem:suff_decr} that $\mathcal L(\h u^{(k)},\h h^{(k)},\h g^{(k)})$ converges due to its \tp{monotonicity}. 	
		
		We then  sum the inequality \eqref{ineq:lem_suff_decr}  from $k=0$ to $K$, thus getting
		\begin{eqnarray*}
			&&	\mathcal{L} (\h u^{(K+1)}, \h h^{(K+1)}; \tp{\h g^{(K+1)}})\\
			&\le& \mathcal{L} (\h u^{(0)}, \h h^{(0)}; \h g^{(0)})-c_1\sum_{k=0}^K\|\h u^{(k+1)}-\h u^{(k)}\|_2^2-c_2\sum_{k=0}^K\|\h h^{(k+1)} - \h h^{(k)}\|_2^2.
		\end{eqnarray*}
		Let $K\rightarrow \infty,$ we have both summations of $\sum_{k=0}^{\infty}\|\h u^{(k+1)}-\h u^{(k)}\|_2^2$ and $\sum_{k=0}^{\infty}\|\h h^{(k+1)} - \h h^{(k)}\|_2^2$ are  finite, indicating that
		$\h u^{(k)}-\h u^{(k+1)}\rightarrow 0$, $\h h^{(k)} - \h h^{(k+1)}\rightarrow 0$. Then by \cite[Lemma 4.2]{wang2020limitedct}, we get $\h g^{(k)}-\h g^{(k+1)} \rightarrow 0 $.
		By  $(\h u^{(k_j)},\h h^{(k_j)},\h g^{(k_j)})\rightarrow (\h u^*,\h h^*,\h g^*),$ we have  $(\h u^{(k_j+1)},\h h^{(k_j+1)},\h g^{(k_j+1)})\rightarrow (\h u^*,\h h^*,\h g^*), A\h u^*=\h b$ (as $A\h u^{(k_j)}=\h b$), and $D \h u^*=\h h^*$ (by the update of $\h g$). It further follows from Lemma \ref{thm:stantionary_con} that $\h 0\in \tp{\mathcal \partial L(\h u^*,\h h^*, \h g^*)}$ and hence $(\h u^*,\h h^*, \h g^\ast)$ is a \tp{stationary point of \eqref{eq:augL4con}. }
	\end{proof}

\tp{
Lastly, we discuss the global convergence, i.e., the entire sequence converges, which is stronger than the subsequential convergence as in
\Cref{theo:box}, under a stronger assumption that  the augmented Lagrangian $\mathcal L$ has the Kurdyka-\L ojasiewicz (KL) property \cite{bolte2007lojasiewicz}; see \Cref{def:KL}.
The global convergence of the proposed scheme \eqref{ADMML1overL2} is characterized in \Cref{theowhole}, which   can be proven in a similar way as \cite[Theorem 4]{li2015global}.
Unfortunately, the KL property is an open problem for the $L_1/L_2$ functional, not to mention $L_1/L_2$ on the gradient.
\begin{definition} \label{def:KL}(KL property, \cite{KLproperties})
	We say a proper closed function
$h:\!\mathbb R^n\rightarrow(-\infty,+\infty]$
	satisfies the KL property at a point ${\hat {\h x}}\in {\text{\rm dom}}\partial h$ if there exist a constant $\nu\in(0,\infty]$, a neighborhood
	$U$ of ${\hat {\h x}},$ and a continuous concave function $\phi:\;[0,\nu)\rightarrow[0,\infty)$ with
	$\phi(0)=0$ such that
	\begin{itemize}
		\item[(i)] $\phi$ is continuously differentiable on $(0,\nu)$ with $\phi'>0$ on $(0,\nu)$;
		\item[(ii)] for every $\h x\in U$ with $h({\hat {\h x}})< h(\h x)< h(\hat {\h x}) +\nu$, it holds that
		\begin{eqnarray*}\phi'(h(\h x)-h(\hat {\h x})){\text{\rm{dist}}}(\h 0,\partial h({\h x}))\ge 1, \end{eqnarray*}
	\end{itemize}
	where $ \text{\rm dist}(\h x, C) $ denotes the distance  from a point $\h x$ to a closed set $C$ measured in $\|\cdot \|_2$ with a convention of $\text{\rm dist}(\h 0,\emptyset ):=+\infty$.
\end{definition}
\begin{theorem}\label{theowhole}
(global convergence) Under the Assumption 1 and a sufficiently large $\rho$,
		the sequence $\{\h u^{(k)},\h h^{(k)},\h g^{(k)}\}$  generated by
		\eqref{ADMML1overL2}.
 If the augmented Lagrangian $\mathcal L$ has the KL property,
$\{\h u^{(k)},\h h^{(k)},\h g^{(k)}\}$ converges to a stationary point of (\ref{eq:augL4con}). 
\end{theorem}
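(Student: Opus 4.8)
The plan is to follow the by-now standard recipe for proving global convergence of a descent method under the Kurdyka--\L ojasiewicz property, exactly as carried out in \cite[Theorem 4]{li2015global}: the two structural ingredients that recipe needs -- a sufficient decrease of $\mathcal L$ along the iterates and an a-priori subgradient bound controlled by consecutive differences -- are already supplied by \Cref{lem:suff_decr} and \Cref{thm:stantionary_con}. Write $\h x^{(k)}:=(\h u^{(k)},\h h^{(k)},\h g^{(k)})$, and recall from the proof of \Cref{theo:box} that under Assumption~1 and sufficiently large $\rho$ the sequence $\{\h x^{(k)}\}$ is bounded, $\|\h x^{(k+1)}-\h x^{(k)}\|_2\to 0$, and the monotone sequence $\{\mathcal L(\h x^{(k)})\}$ converges to a finite value $\mathcal L^\ast$. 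Note also that \cite[Lemma 4.2]{wang2020limitedct} bounds $\|\h g^{(k+1)}-\h g^{(k)}\|_2$ by a multiple of $\|\h h^{(k+1)}-\h h^{(k)}\|_2$, so that, combining with \eqref{ineq:lem_suff_decr} and \Cref{thm:stantionary_con}, one has $c\,\|\h x^{(k+1)}-\h x^{(k)}\|_2^2\le \mathcal L(\h x^{(k)})-\mathcal L(\h x^{(k+1)})$ and $\mathrm{dist}(\h 0,\partial\mathcal L(\h x^{(k)}))\le b\,\|\h x^{(k)}-\h x^{(k-1)}\|_2$ for positive constants $b,c$.

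First I would study the limit-point set $\Omega:=\{\h x^\ast:\ \h x^{(k_j)}\to\h x^\ast \text{ for some subsequence}\}$. By boundedness $\Omega$ is nonempty and compact, and \Cref{theo:box} shows every element of $\Omega$ is a stationary point of $\mathcal L$. The crucial claim is $\mathcal L\equiv\mathcal L^\ast$ on $\Omega$: along the sequence the indicator terms $\Pi_{A\h u=\h b}$ and $\Pi_{[p,q]^N}$ vanish (feasibility is preserved by the $\h u$-update and the box by the inner $\h v$-update), while $\|D\h u\|_1/\|\h h\|_2$ is continuous on the region $\{\|\h h\|_2\ge\epsilon\}$ guaranteed by \Cref{assmp} and the augmentation terms are continuous; hence $\mathcal L(\h x^{(k)})\to\mathcal L(\h x^\ast)$ for each $\h x^\ast\in\Omega$, forcing $\mathcal L(\h x^\ast)=\mathcal L^\ast$.

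Next, assuming $\mathcal L$ has the KL property (Definition~\ref{def:KL}), I would invoke the uniformized KL inequality on the compact set $\Omega$ (cf.\ \cite{KLproperties}): there exist $\nu>0$, $\delta>0$, and a concave $\varphi\in C^1((0,\nu))$ with $\varphi'>0$ such that $\varphi'(\mathcal L(\h x)-\mathcal L^\ast)\,\mathrm{dist}(\h 0,\partial\mathcal L(\h x))\ge 1$ whenever $\mathrm{dist}(\h x,\Omega)<\delta$ and $\mathcal L^\ast<\mathcal L(\h x)<\mathcal L^\ast+\nu$. Since $\mathrm{dist}(\h x^{(k)},\Omega)\to 0$ and $\mathcal L(\h x^{(k)})\downarrow\mathcal L^\ast$, all sufficiently large $k$ lie in this tube (the case $\mathcal L(\h x^{(k)})=\mathcal L^\ast$ being trivial as the descent inequality then freezes the sequence). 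Setting $\Delta_k:=\varphi(\mathcal L(\h x^{(k)})-\mathcal L^\ast)$, concavity of $\varphi$ together with the descent and subgradient bounds gives $\Delta_k-\Delta_{k+1}\ge \varphi'(\mathcal L(\h x^{(k)})-\mathcal L^\ast)\,(\mathcal L(\h x^{(k)})-\mathcal L(\h x^{(k+1)}))\ge \frac{c\,\|\h x^{(k+1)}-\h x^{(k)}\|_2^2}{b\,\|\h x^{(k)}-\h x^{(k-1)}\|_2}$, and then Young's inequality yields
\begin{equation*}
\|\h x^{(k+1)}-\h x^{(k)}\|_2\ \le\ \tfrac12\,\|\h x^{(k)}-\h x^{(k-1)}\|_2+\tfrac{b}{2c}\bigl(\Delta_k-\Delta_{k+1}\bigr).
\end{equation*}
Summing over $k$ telescopes the $\Delta$-terms and absorbs half of the difference series on the left, so $\sum_k\|\h x^{(k+1)}-\h x^{(k)}\|_2<\infty$; hence $\{\h x^{(k)}\}$ is Cauchy and converges to a single point $\h x^\ast$, which by \Cref{theo:box} is a stationary point of \eqref{eq:augL4con}, as asserted.

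I expect the main obstacle to be the second paragraph -- showing $\mathcal L$ is finite-valued and constant on $\Omega$ with $\mathcal L(\h x^{(k)})\to\mathcal L^\ast$ -- precisely because $\mathcal L$ is only lower semicontinuous (through its indicator terms), so rather than plain continuity one must use that the iterates remain feasible and that $\|\h h^{(k)}\|_2$ stays bounded away from $0$; the remaining steps are a faithful transcription of the argument in \cite{li2015global}.
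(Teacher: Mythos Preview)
Your proposal is correct and follows exactly the approach the paper takes: the paper's own proof simply states that the argument is ``almost the same as \cite[Theorem 4]{li2015global}, thus omitted here,'' and what you have written is precisely that standard KL recipe instantiated with \Cref{lem:suff_decr}, \Cref{thm:stantionary_con}, and the boundedness/feasibility facts from the proof of \Cref{theo:box}. Your discussion of why $\mathcal L$ is constant on $\Omega$ (via feasibility of the iterates and the lower bound on $\|\h h^{(k)}\|_2$) is in fact more explicit than anything the paper provides.
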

}
\begin{proof} \tp{The proof is almost the same as \cite[Theorem 4]{li2015global}, thus omitted here.}\end{proof}

	\section{Experimental results}
	\label{sec:experiments}

	In this section, we test the proposed algorithm on three prototypical imaging applications: super-resolution, MRI reconstruction, and limited-angle CT reconstruction. As analogous to  Section~\ref{sec:toy_example},  super-resolution refers to recovering a 2D image from low-frequency measurements, i.e., we restrict the data within a square in the center of the frequency domain. The data measurements for the MRI reconstruction are taken along \tp{radial} lines in the frequency domain; such a \tp{radial} pattern \cite{candes2006robust} is referred to as a mask. The sensing matrix for the CT reconstruction is the Radon transform \cite{AM01}, while the term ``limited-angle'' means the rotating angle does not cover the entire circle \cite{gwang_limitedct,zhang2006comparative,wang2011low}.

	We evaluate the performance in terms of the relative error (RE) and  \tp{the peak signal-to-noise ratio (PSNR), defined by }
	\begin{equation*}
		\text{RE}(\h u^\ast,\tilde{\h u}) := \frac{\|\h u^\ast-\tilde{\h u}\|_2}{\|\tilde{\h u}\|_2} \quad \text{and} \quad 	   \tp{ {\rm PSNR}(\h u^\ast, \tilde{\h u}) := 10 \log_{10} \frac{N P^2}{\|\h u^\ast-\tilde{\h u}\|_2^2},}
	\end{equation*} 
	where $\h u^\ast$ is the restored image, $\tilde{\h u}$ is the ground truth, and \tp{$P$ is the maximum peak value of $\tilde{\h u}.$} 
	
To ease with parameter tuning, 
we scale the pixel value  to $[0,1]$ for the original images in each application and rescale the solution back after computation. Hence the box constraint is set as $[0,1]$. We start by discussing some algorithmic behaviors regarding  the box constraint, the maximum number of inner iterations, and \tp{sensitivity analysis} on algorithmic parameters in \Cref{sect:exp_alg}. The remaining sections are organized by specific applications. We compare the proposed $L_1/L_2$ approach with total variation ($L_1$ on the gradient) \cite{rudinOF92} \tp{and two  nonconvex regularizations: $L_p$ for $p=0.5$  and $L_1$-$\alpha L_2$ for $\alpha = 0.5$  on the gradient as suggested in \cite{louZOX15}. To solve for the $L_p$ model, we replace the soft shrinkage  \eqref{ADMM_l1con_d} by the proximal operator corresponding to $L_p$ that was derived in \cite{Xu2012}, and apply the same ADMM framework as the $L_1$ minimization. } 
To have a fair comparison,  we incorporate the $[0,1]$ box constraint in $L_1$, $L_p$, $L_1$-$\alpha L_2$, and $L_1/L_2$ models.  We implement all these competing methods by ourselves and tune the parameters to achieve the smallest RE to the ground-truth. Due to the constrained formulation, no noise is added.
We set the initial condition of $\h u$ to be a zero vector for all the methods. The stopping criterion  for the proposed Algorithm~\ref{alg:l1dl2_box} is when the relative error between two consecutive iterates is smaller than $\epsilon=10^{-5}$ for both inner and outer iterations. 
All the numerical experiments are carried out in a  desktop with CPU (Intel i7-9700F, 3.00 GHz) and MATLAB 9.8 (R2020a).

\subsection{Algorithmic behaviors}\label{sect:exp_alg}

	We  discuss three computational aspects of the proposed
\Cref{alg:l1dl2_box}. In particular, we want to analyze the influence of the box constraint, the maximum number of inner iterations (denoted by jMax), and the algorithmic parameters on the reconstruction results of MRI and CT problems. We use MATLAB's built-in function {\tt phantom}, which is  called  the Shepp-Logan (SL) phantom, to test on 6 \tp{radial} lines for MRI and 
$45^\circ$ scanning range for CT.   
	The analysis is assessed in terms of objective values $\frac{\| D \h u^{(k)} \|_1}{\| D \h u^{(k)} \|_2}$ and RE$(\h u^{(k)}, \tilde{\h u})$ versus the CPU time.

	\begin{figure}[t]
		\begin{center}
			\begin{tabular}{cc}
				MRI (jMax = 5) & CT (jMax = 1)\\
				\includegraphics[width=0.4\textwidth]{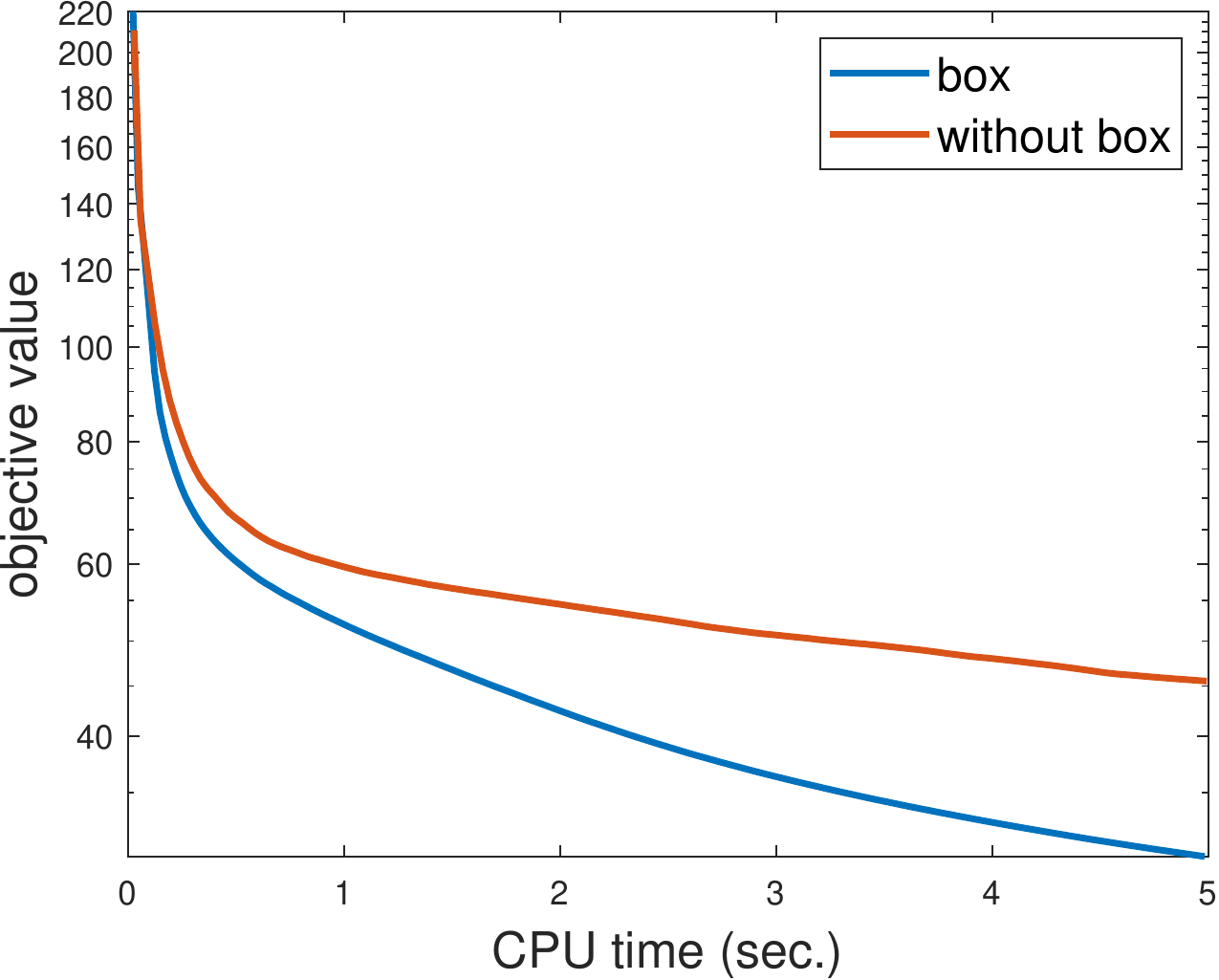} &
				\includegraphics[width=0.4\textwidth]{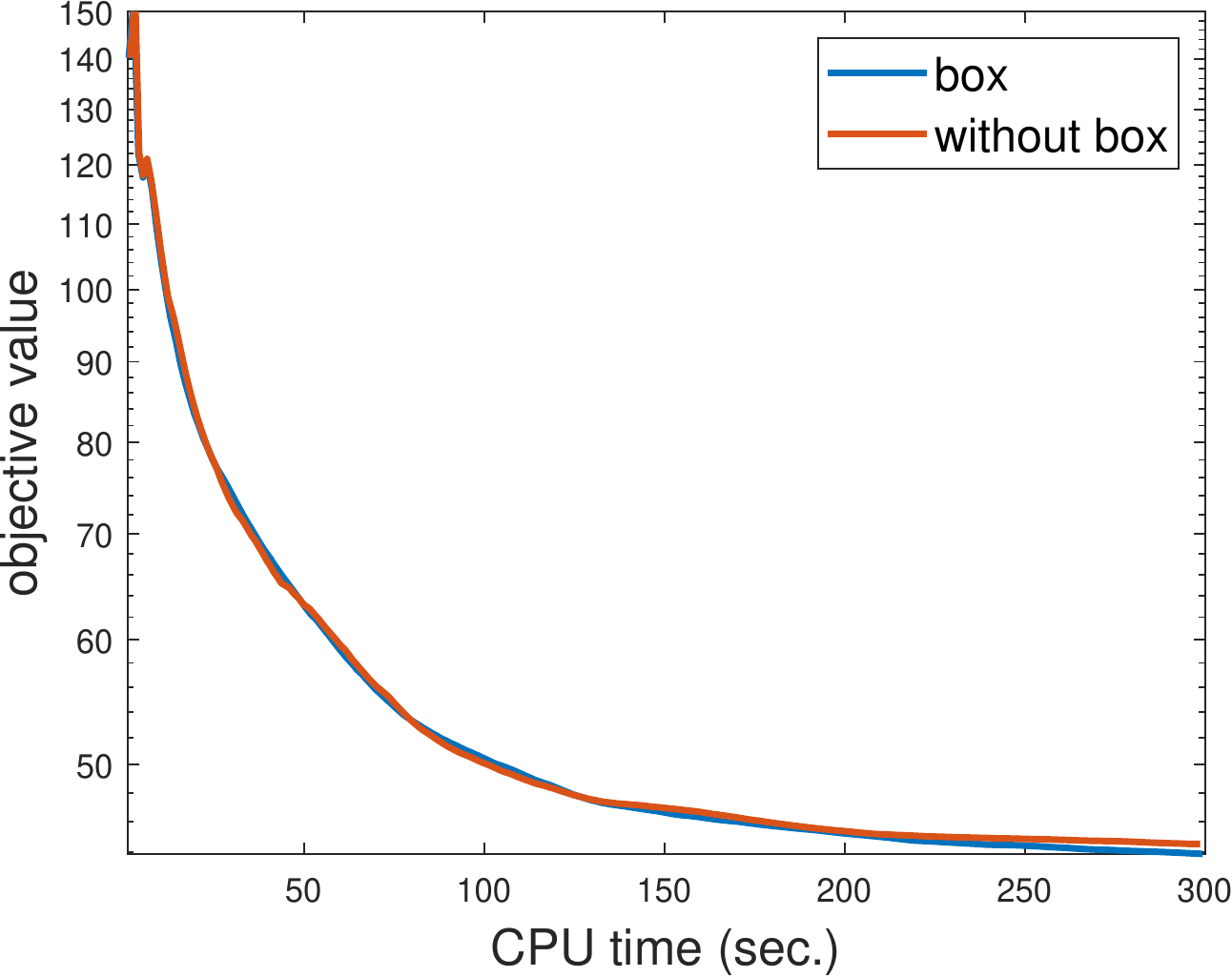}  \\
				\includegraphics[width=0.4\textwidth]{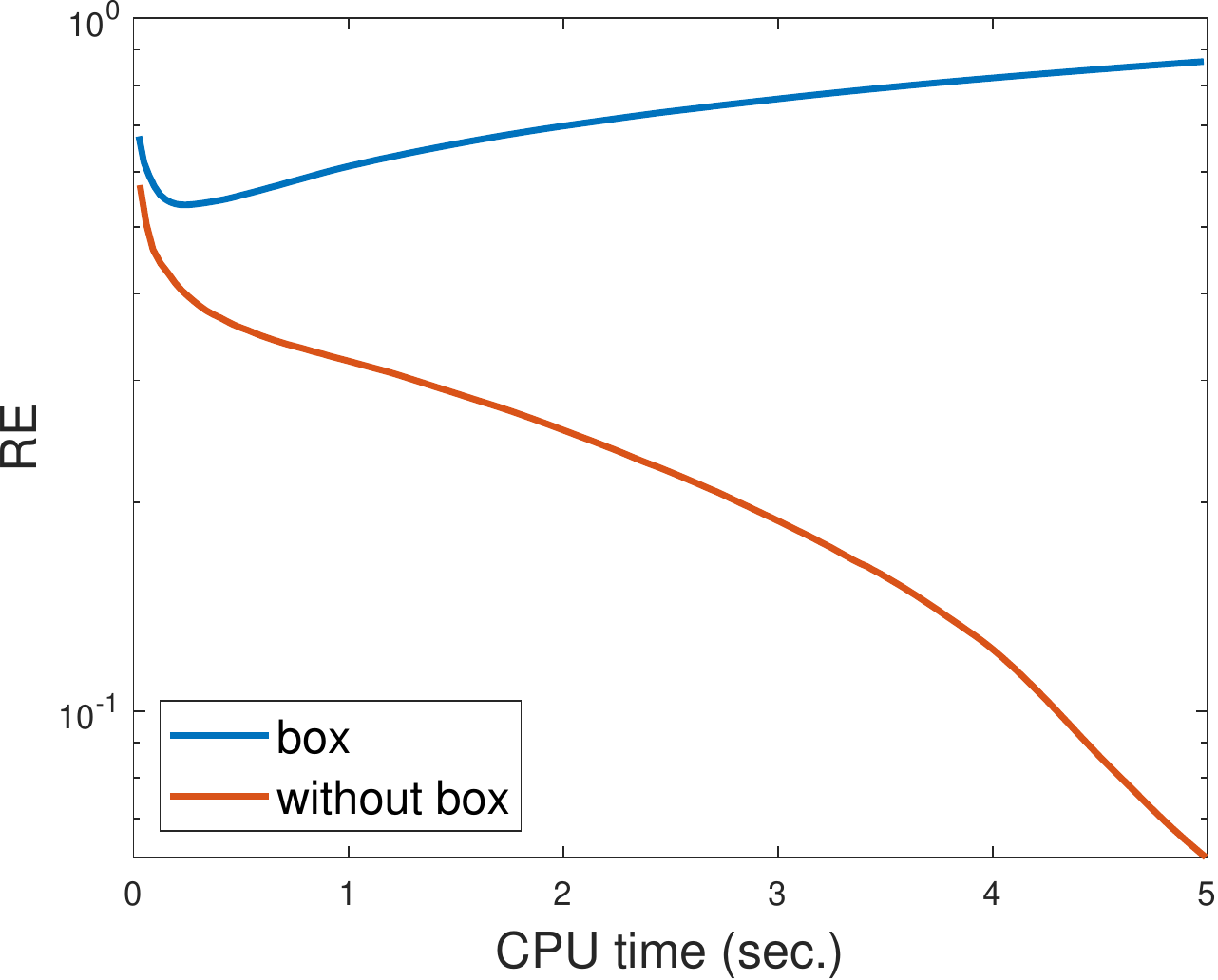} &
				\includegraphics[width=0.4\textwidth]{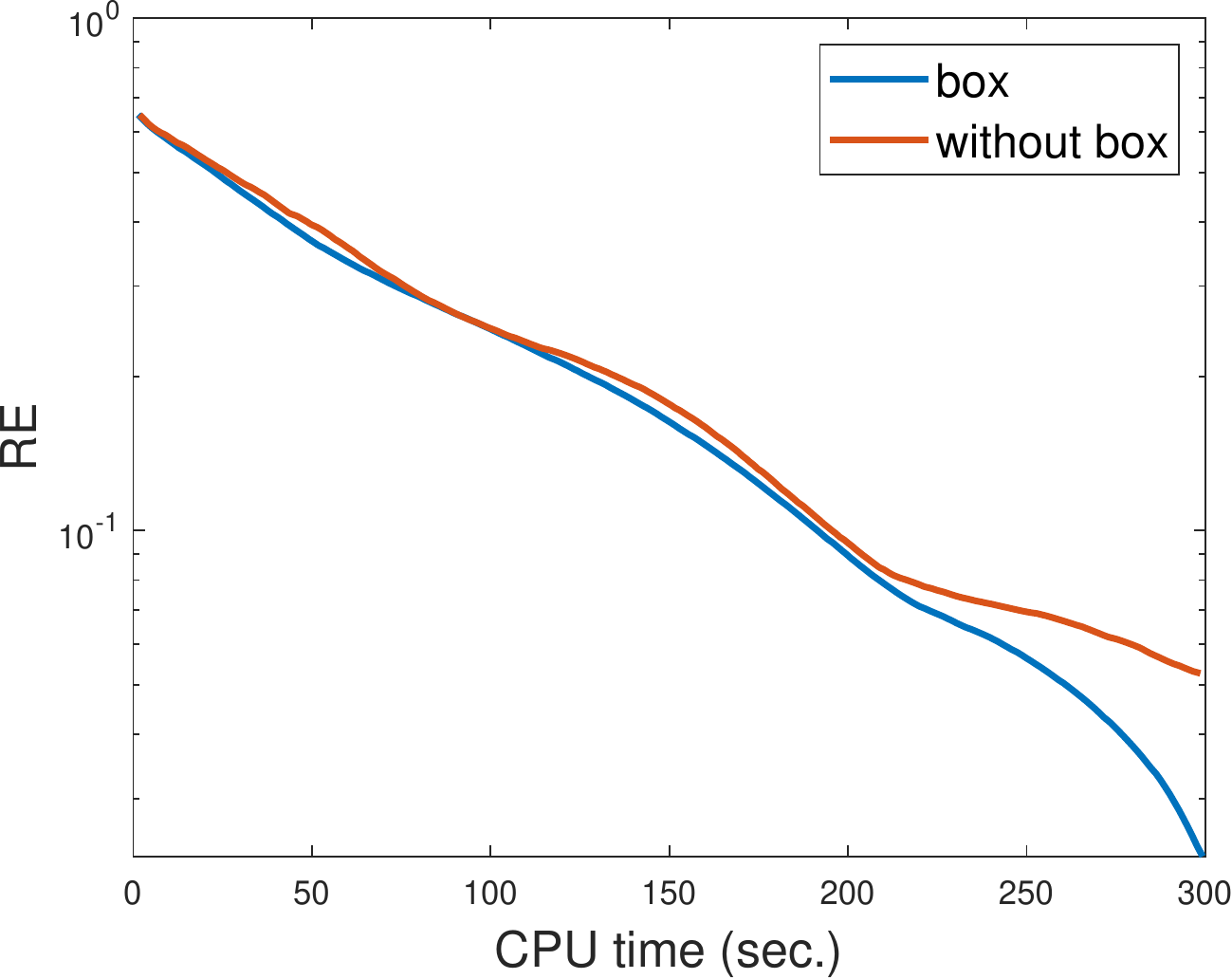} 
			\end{tabular}
		\end{center}
		\caption{The effects of  box constraint on   objective values (top) and relative errors (bottom) for MRI (left) and CT (right) reconstruction problems. }\label{fig:effect_box}
	\end{figure}

In \Cref{fig:effect_box}, we present algorithmic behaviors of the box constraint for both MRI and CT problems, in which we set jMax to be 5 and 1, respectively (we will discuss the effects of inner iteration number shortly.)   In the MRI problem, the box constraint is  critical; without it, our algorithm converges to another local minimizer, as RE goes up. With the box constraint, the objective values decrease faster than in the no-box case, and the relative errors drop down monotonically. In the CT case, the influence of box is minor but we can see a faster decay of RE  than the no-box case after 200 seconds.  In the light of these observations,  we only consider the algorithm with a box constraint for the rest of the experiments. 
	
	\begin{figure}[t]
		\begin{center}
			\begin{tabular}{cc}
				MRI (jMax = 1,3,5,10) & CT (jMax = 1,3,5,10)\\
				\includegraphics[width=0.4\textwidth]{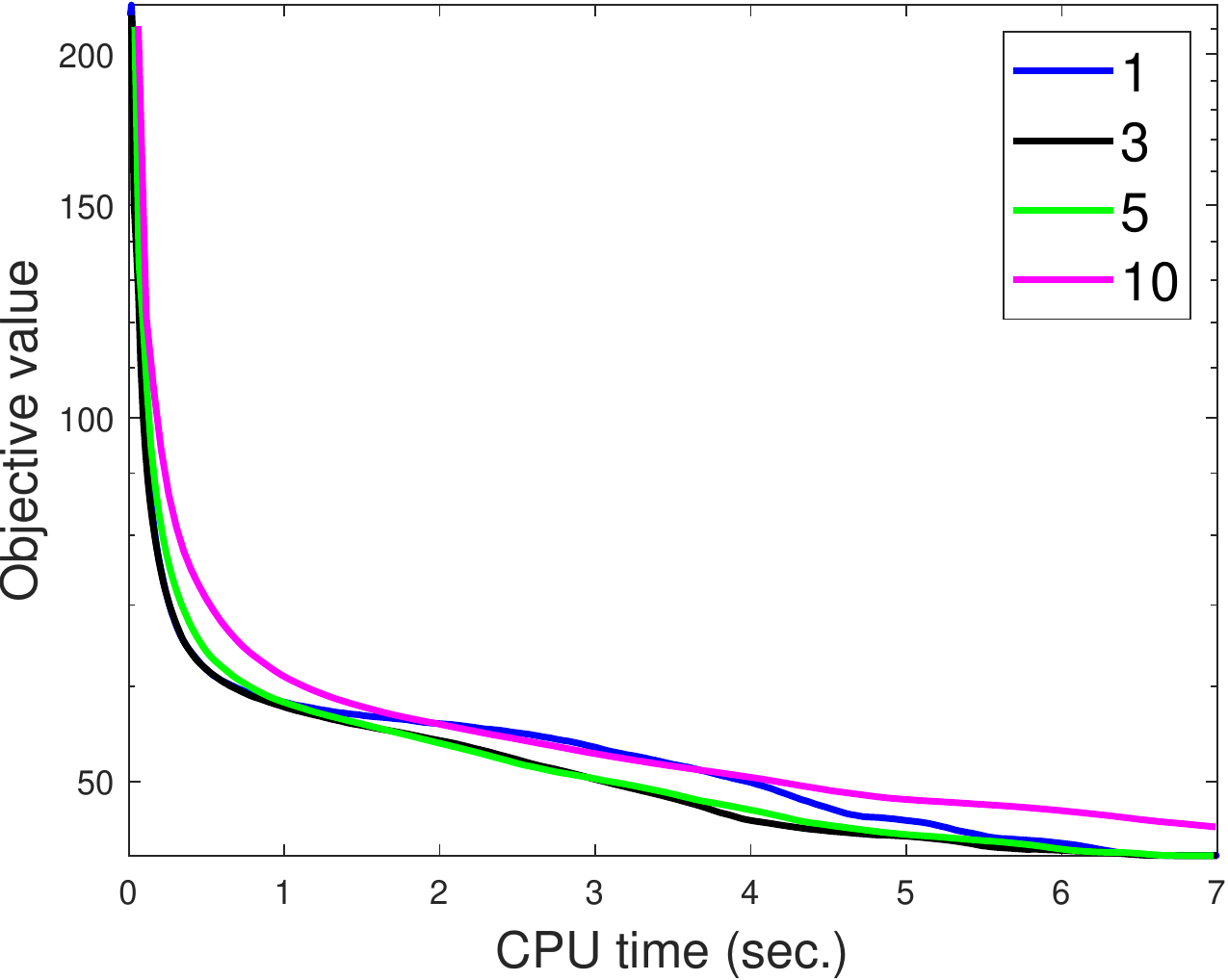} &
				\includegraphics[width=0.4\textwidth]{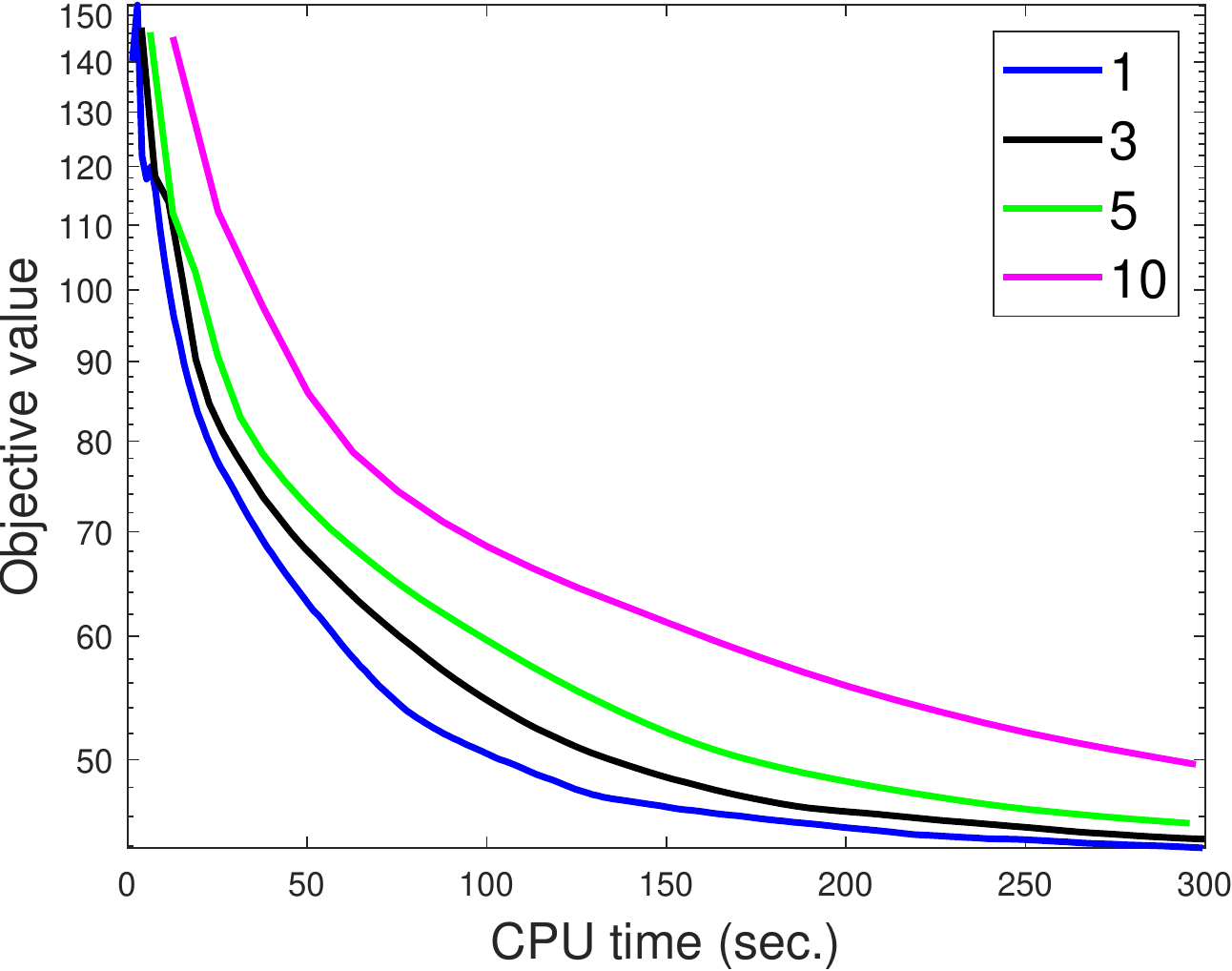}  \\
				\includegraphics[width=0.4\textwidth]{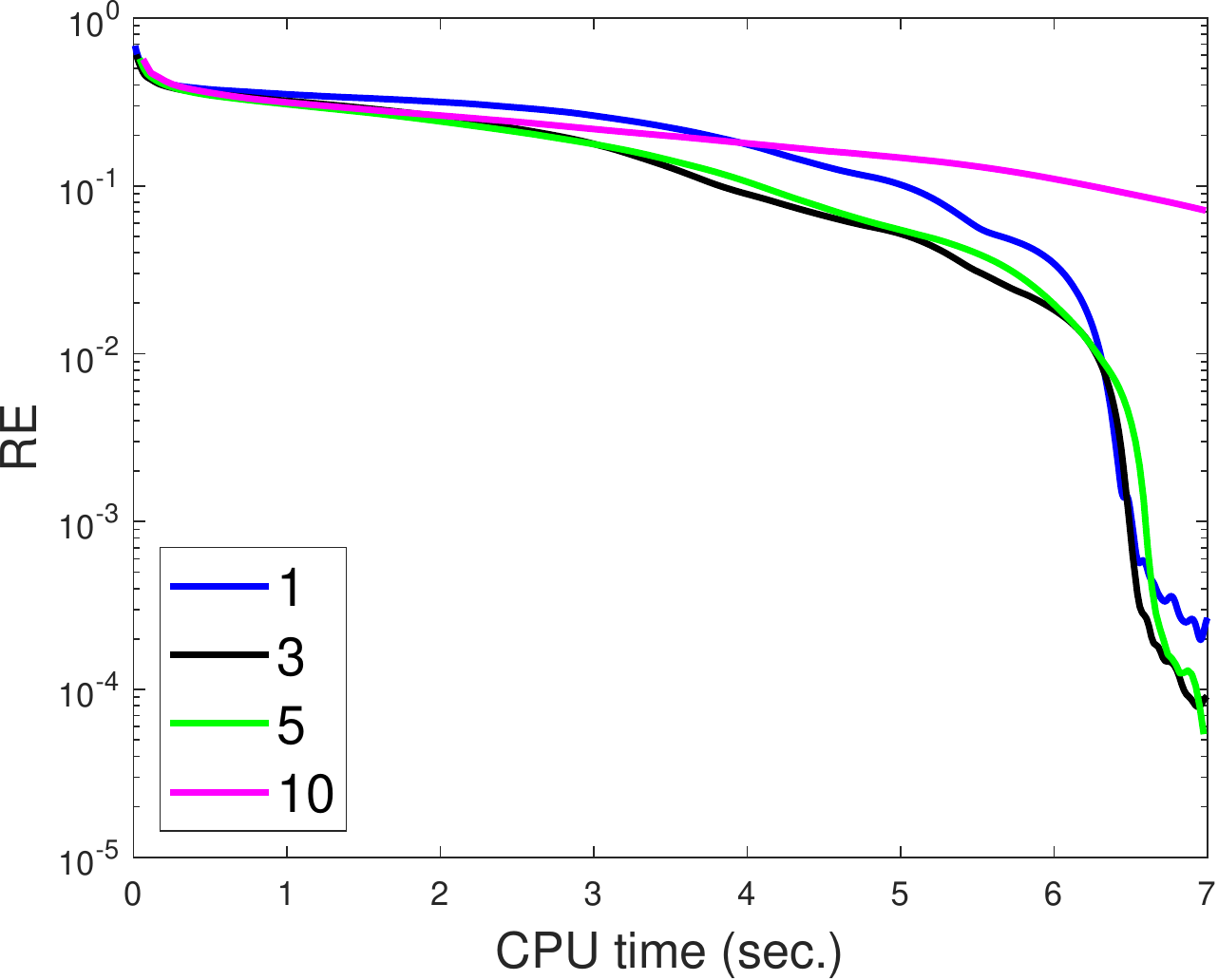} &
				\includegraphics[width=0.4\textwidth]{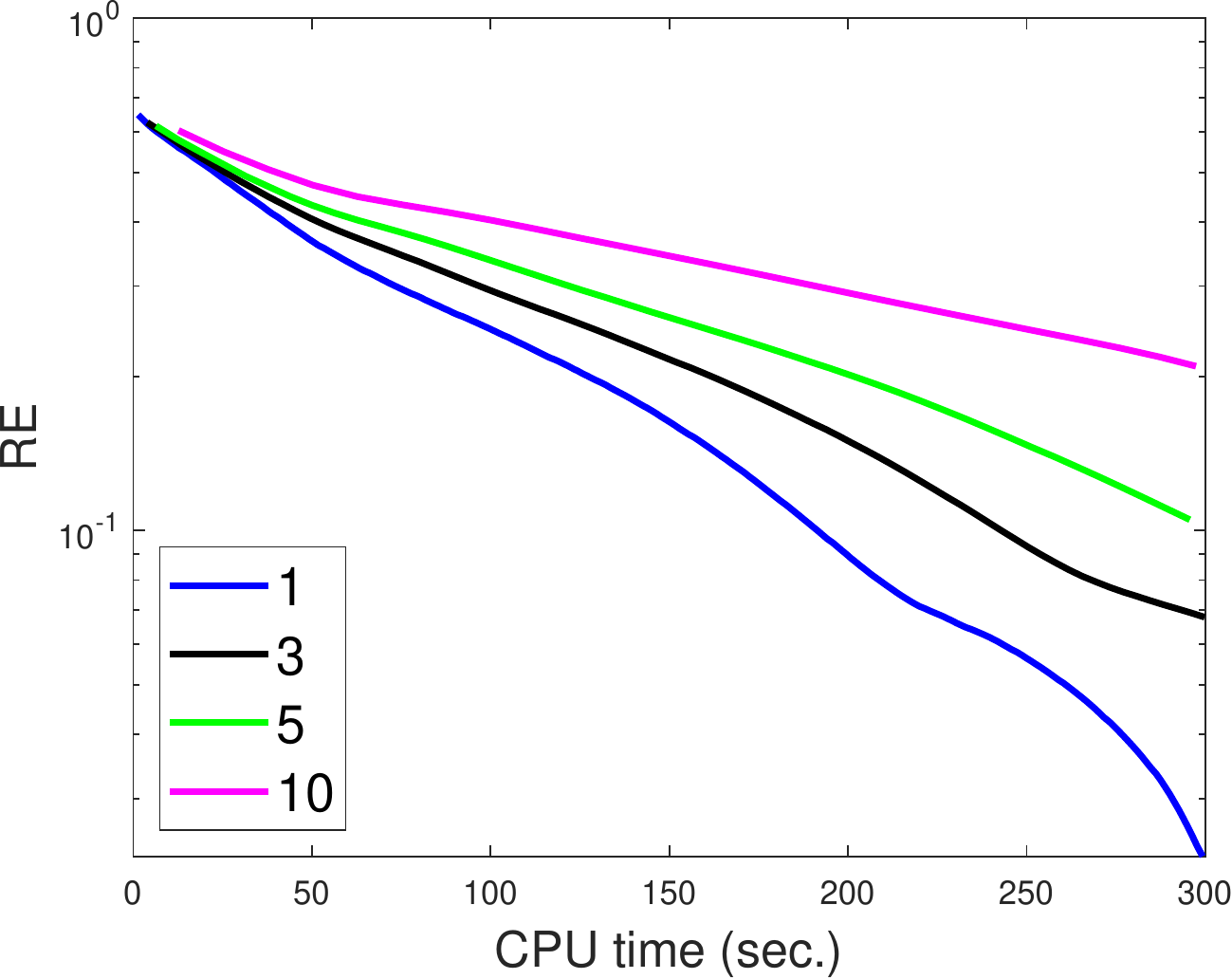} 
			\end{tabular}
		\end{center}
		\caption{The effects of  the maximum number in the inner loops (jMax) on objective values (top) and  relative errors (bottom) for MRI (left) and CT (right) reconstruction problems. }\label{fig:effect_inner}
	\end{figure}

We then study the effect of jMax on MRI/CT reconstruction problems in \Cref{fig:effect_inner}. 
We 	fix the maximum outer iterations as 300, and examine four possible jMax values: 1, 3, 5 and 10.  
	 In the case of MRI,  jMax = 10 causes the slowest decay of both objective value and RE. Besides, we observe that only one inner iteration, which is equivalent to our previous approach \cite{l1dl2}, is not as efficient as  more inner iterations to reduce the RE in the MRI problem. The CT results are slightly different, as one inner iteration seems sufficient to yield satisfactory results. 
	 The disparate behavior of CT to MRI is probably due to inexact solutions by CG iterations. In other words, more inner iterations do not improve the accuracy. 
	 Following \Cref{fig:effect_inner}, 
	we set jMax to be 5 and 1 in MRI and CT, respectively, for the rest of the experiments.

Lastly, we study the sensitivity of the parameters $\lambda, \rho,\beta$ in our proposed algorithm to provide strategies for parameter selection. For simplicity, we set $\gamma=\rho$  as their corresponding auxiliary variables represent $D\h u$.  In the MRI reconstruction problem, we examine three values of $\lambda\in\{ 100, 1000, 10000\}$ and two settings of the number of maximum outer iterations, i.e., kMax $\in\{500, 1000\}.$ For each combination of $\lambda$ and kMax, we vary parameters $(\rho, \beta)\in (2^i,2^j), $ for \tp{$i, j \in [-4, 4], $}
and   plot the RE  in \Cref{fig:sa_param_MRI}. We observe that small values of $\rho$ work well in practice, although we need to assume a sufficiently large value for $\rho$ when proving the convergence results in  \Cref{theo:box}. Besides, a larger kMax value leads to 
larger valley regions for the lowest RE, which verifies that only $\rho$ and $\beta$ affect the convergence rate. \Cref{fig:sa_param_MRI} suggests that our algorithm is generally insensitive to all these parameters $\beta, \rho$ and $\lambda$ as long as $\rho$ is small. 
Similarly in the  CT reconstruction, we set $\lambda\in \{0.005, 0.05, 0.5\},$ kMax $\in\{100,300\}$, and $(\rho, \beta)\in (2^i,2^j), $ for \tp{$i, j \in [-4, 4], $}
\Cref{fig:sa_param_CT} shows that $\rho$ and $\beta$ can be selected in a wide range, especially for large number of outer iterations. But our algorithm is sensitive to $\lambda$ for the CT problem, as $\lambda=0.005$ or $0.5$ yields larger errors than $\lambda=0.05$. 
In the light of this \tp{sensitivity analysis}, we can tune  parameters by finding the optimal combination among a candidate set for each problem, specifically paying attention to the value of $\lambda$ in the limited-angle CT reconstruction.

\begin{figure}[t]
		\begin{center}
			\begin{tabular}{ccc}
				$\lambda=100$ & $\lambda=1000$ & $\lambda=10000$ \\
				\includegraphics[width=0.305\textwidth]{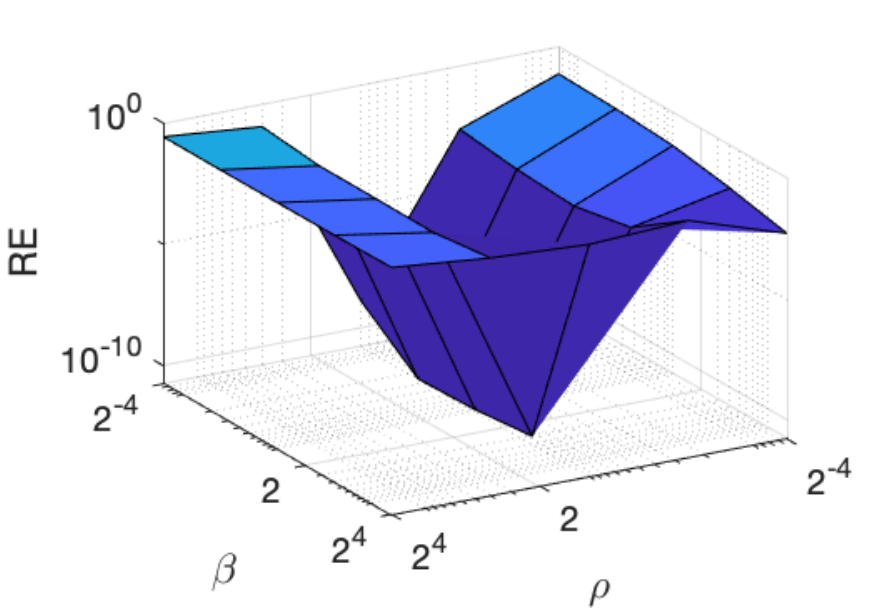} &
				\includegraphics[width=0.305\textwidth]{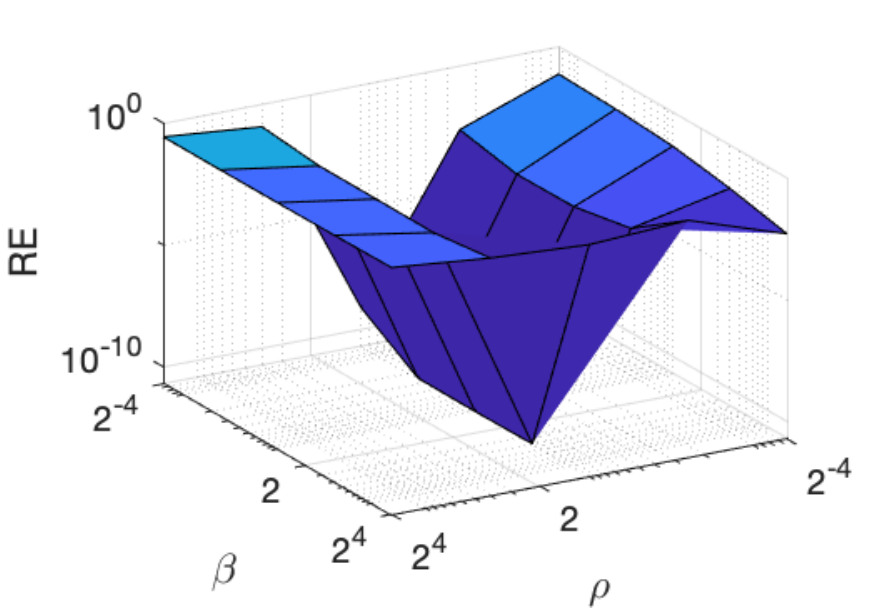} &
				\includegraphics[width=0.305\textwidth]{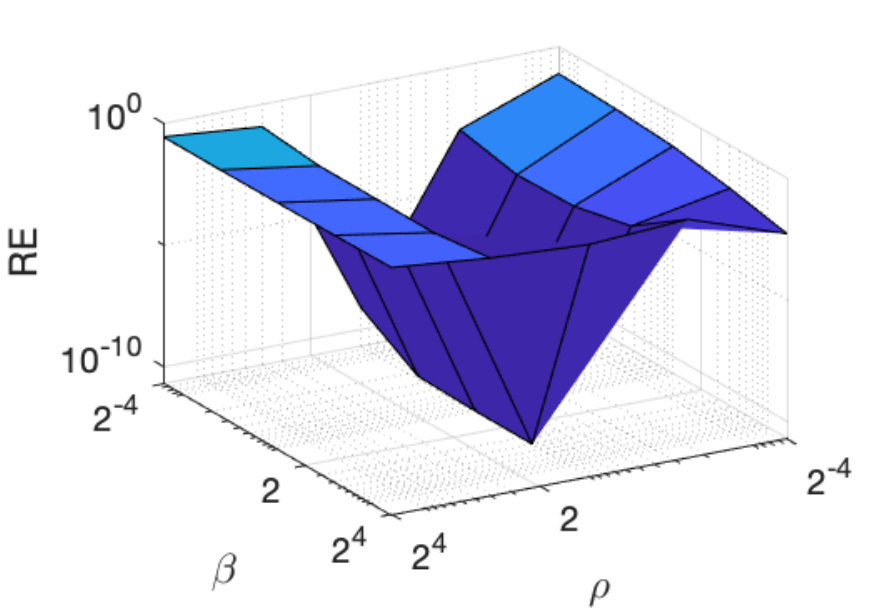}  \\
				\includegraphics[width=0.305\textwidth]{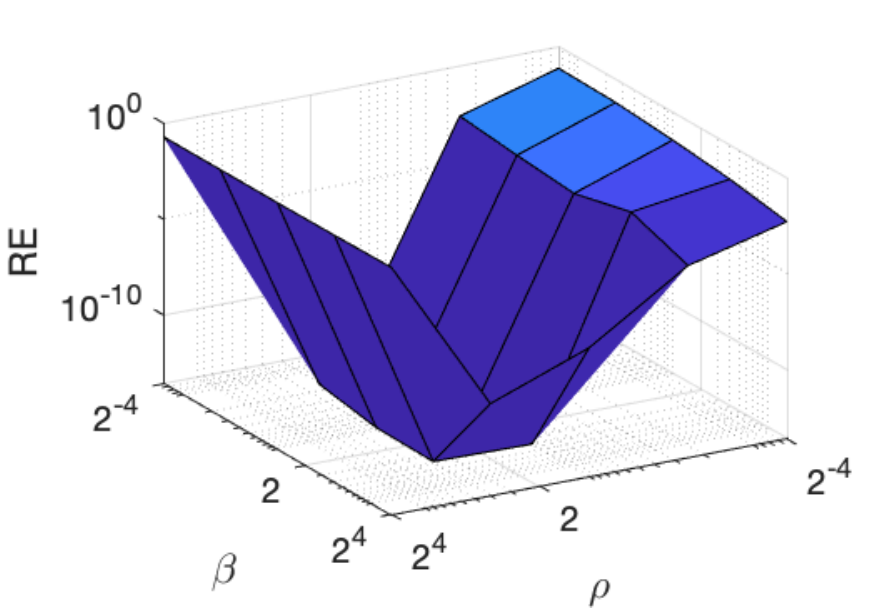} &
				\includegraphics[width=0.305\textwidth]{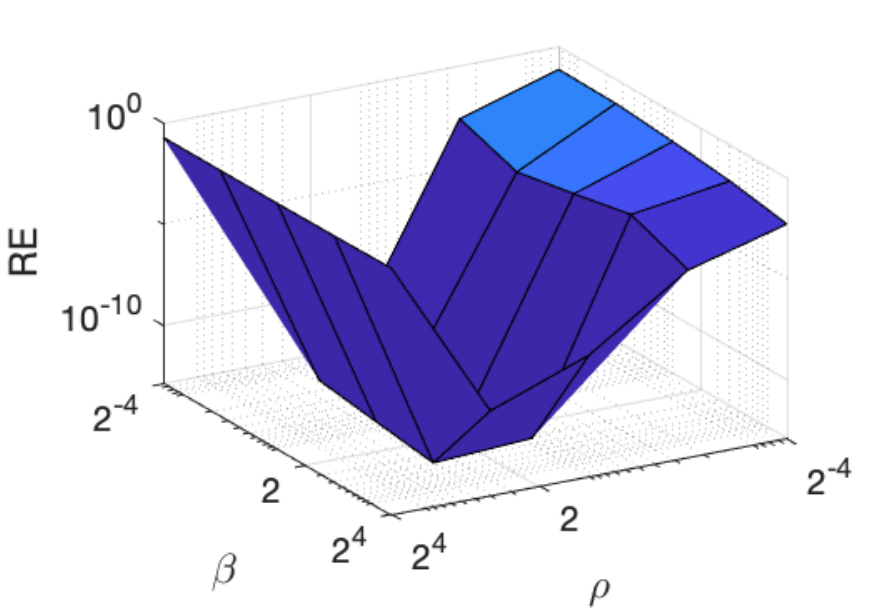} &
				\includegraphics[width=0.305\textwidth]{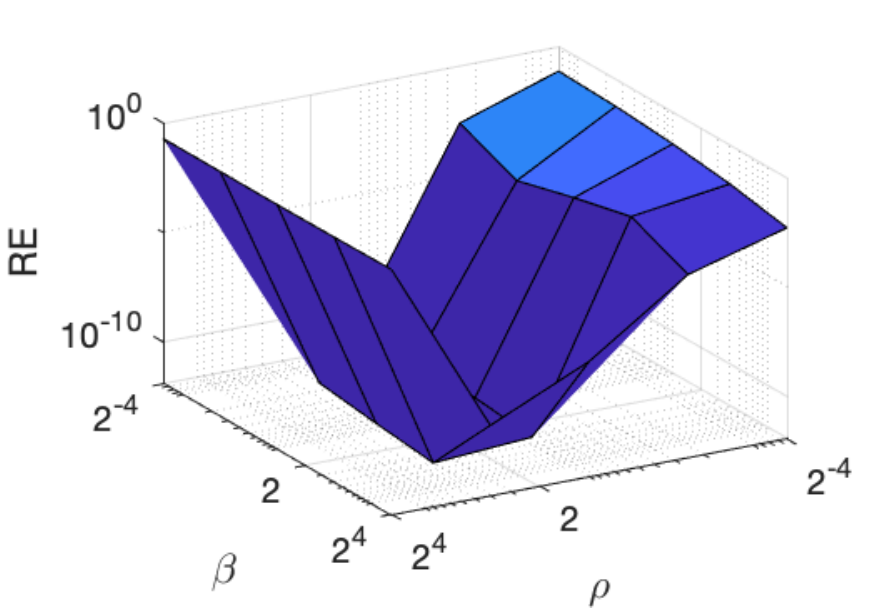}  
			\end{tabular}
		\end{center}
		\caption{\tp{The relative errors  with respect to the parameters $\lambda, \rho,\beta$ in \Cref{alg:l1dl2_box} for MRI reconstruction when kMax is 500 (top) or 1000  (bottom).} }\label{fig:sa_param_MRI}
	\end{figure}
	
	\begin{figure}[t]
		\begin{center}
			\begin{tabular}{ccc}
				$\lambda=0.005$ & $\lambda=0.05$ & $\lambda=0.5$ \\
				\includegraphics[width=0.30\textwidth]{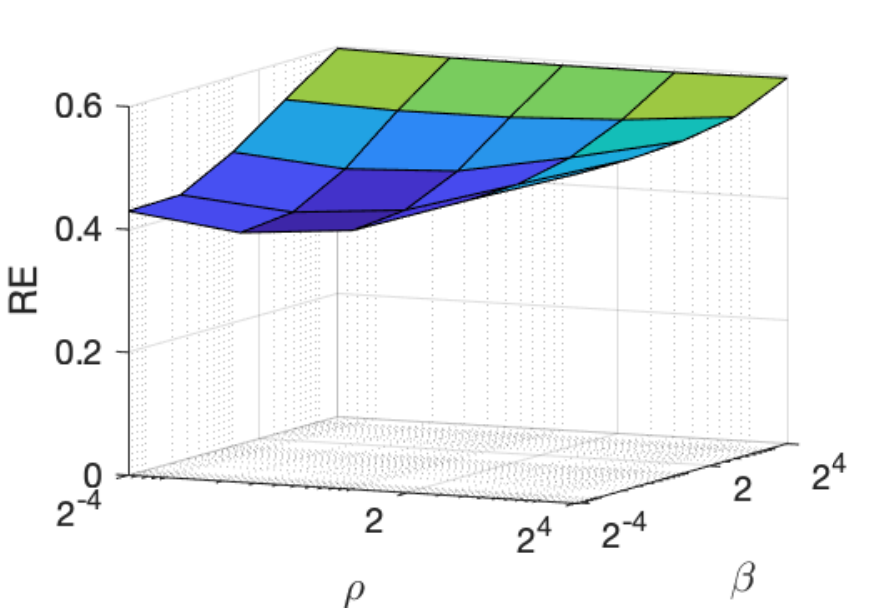} &
				\includegraphics[width=0.30\textwidth]{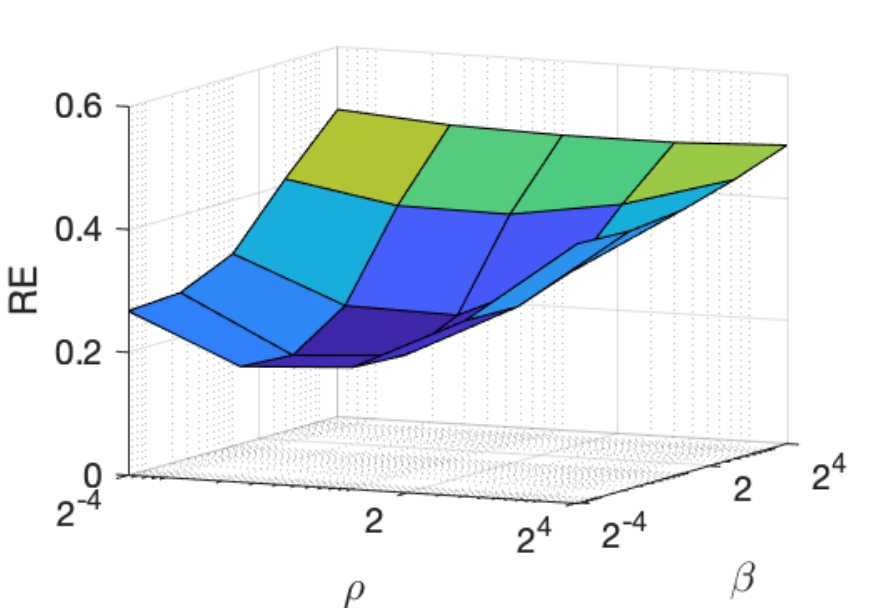} &
				\includegraphics[width=0.30\textwidth]{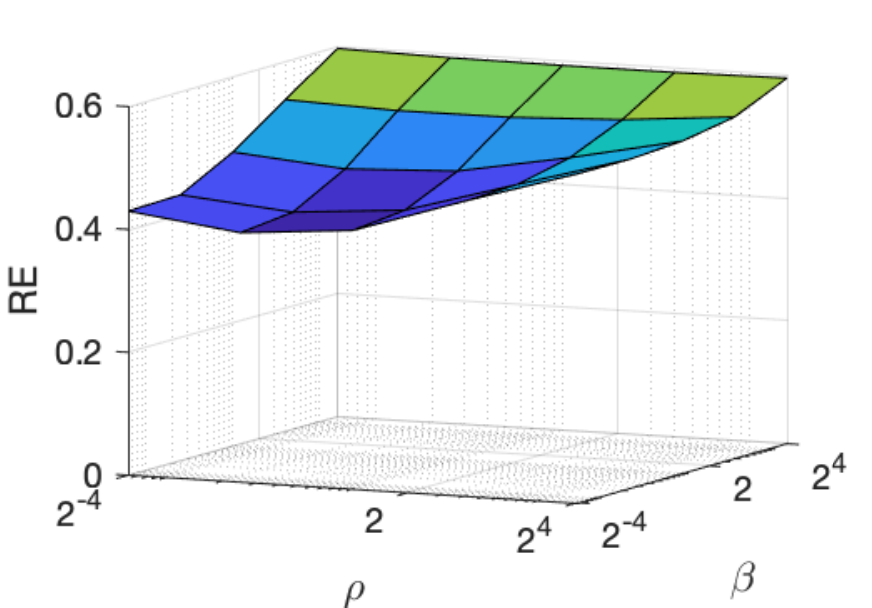}  \\
				\includegraphics[width=0.30\textwidth]{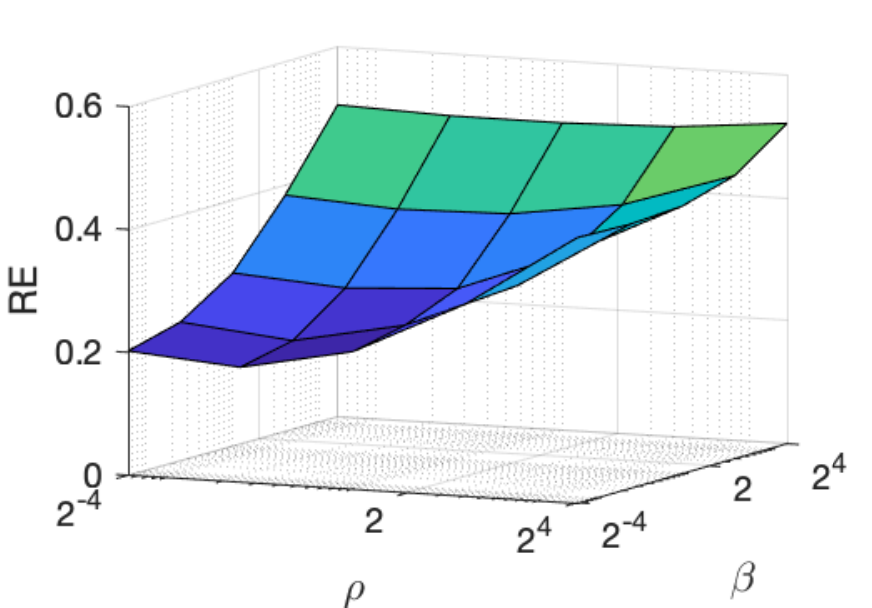} 
				&
				\includegraphics[width=0.30\textwidth]{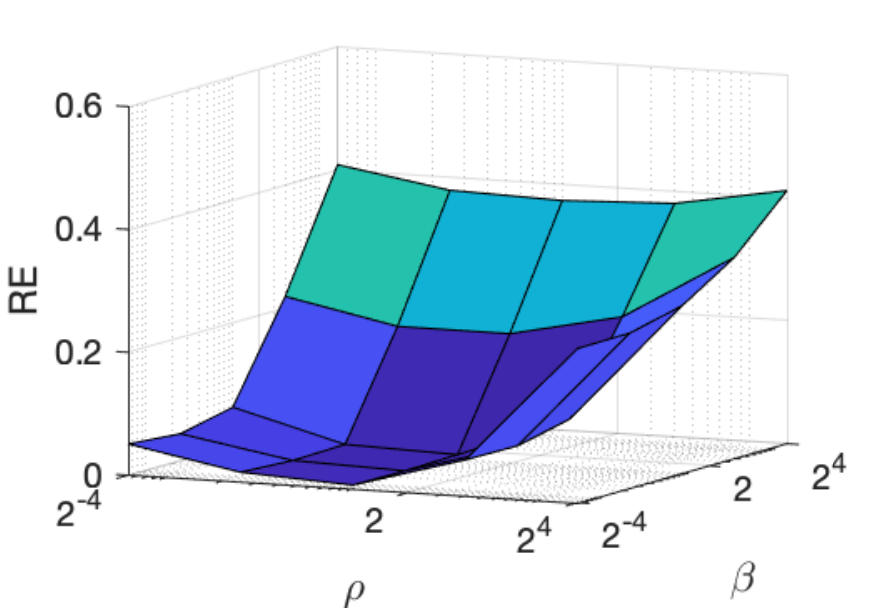} &
				\includegraphics[width=0.30\textwidth]{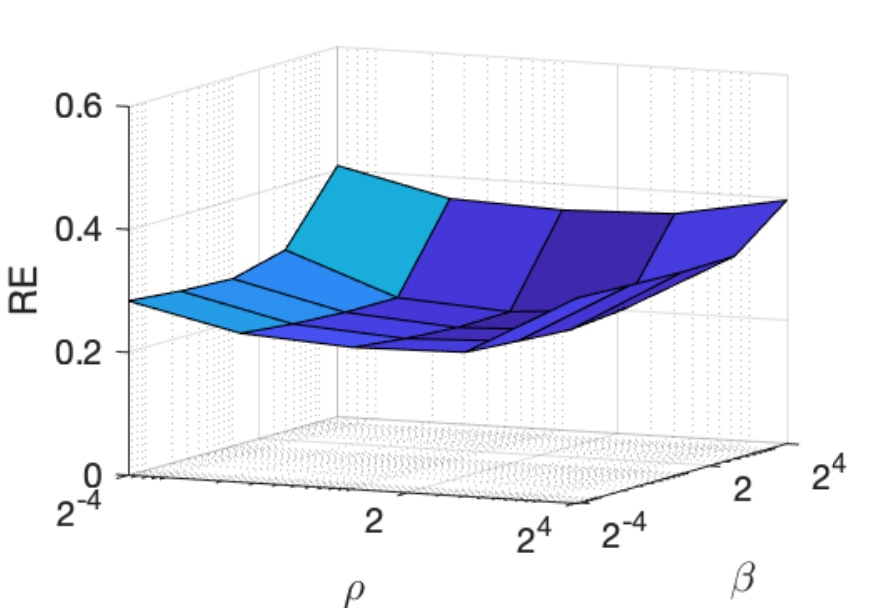}  
			\end{tabular}
		\end{center}
		\caption{\tp{The relative errors  with respect to the parameters $\lambda, \rho,\beta$ in \Cref{alg:l1dl2_box} for CT reconstruction when kMax is 100 (top) or 300  (bottom).} }\label{fig:sa_param_CT}
	\end{figure}
	
\subsection{Super-resolution}\label{sec:SR}
We use an  original image from \cite{gonzalez2004digital}  of size $688\times 688$   to illustrate the performance of super-resolution.  As super-resolution is similar to  MRI  in the sense of frequency measurements, we set up the maximum iteration number as 5 according to \Cref{sect:exp_alg}. We restrict the data within a square in the center of the frequency domain (corresponding to low-frequency measurements), and hence
 varying the sizes of the square leads to different sampling ratios.
In addition to regularized methods, we include a direct method of filling in the unknown frequency data by zero, followed by inverse Fourier transform, which is referred to as zero-filling (ZF).
 The visual results of $1\%$ are presented in \Cref{fig:SR}, showing that  \tp{both $L_p$ and $L_1/L_2$  are}  superior over ZF, $L_1$, and $L_1$-$\alpha L_2.$  Specifically, $L_1/L_2$ can recover these thin rectangular bars, while $L_1$ and $L_1$-$\alpha L_2$ lead to thicker bars with white background, which should be gray. 
 In addition, \ \tp{$L_p$ and $L_1/L_2$} can recover the most of the letter `a' in the bottom of the image, compared to the other methods, while \tp{$L_1/L_2$ is better than $L_p$ with more natural boundaries along  the six dots in the middle left of the image.  }
 One drawback of $L_1/L_2$ is that it produces white artifacts near the third square from the left as well as around the letter `a' in the middle.  
 We suspect $L_1/L_2$ is not very stable, and the box constraint forces the black-and-white regions near edges. We do not present quantitative measures for this example, as four noisy squares on the right of the image lead to meaningless comparison, considering that all the methods return   smooth results. 
 

	

		\begin{figure}[htp]
		\begin{center}
			\begin{tabular}{ccc}
				Ground truth &  ZF & $L_1$\\
				\includegraphics[width=0.3\textwidth]{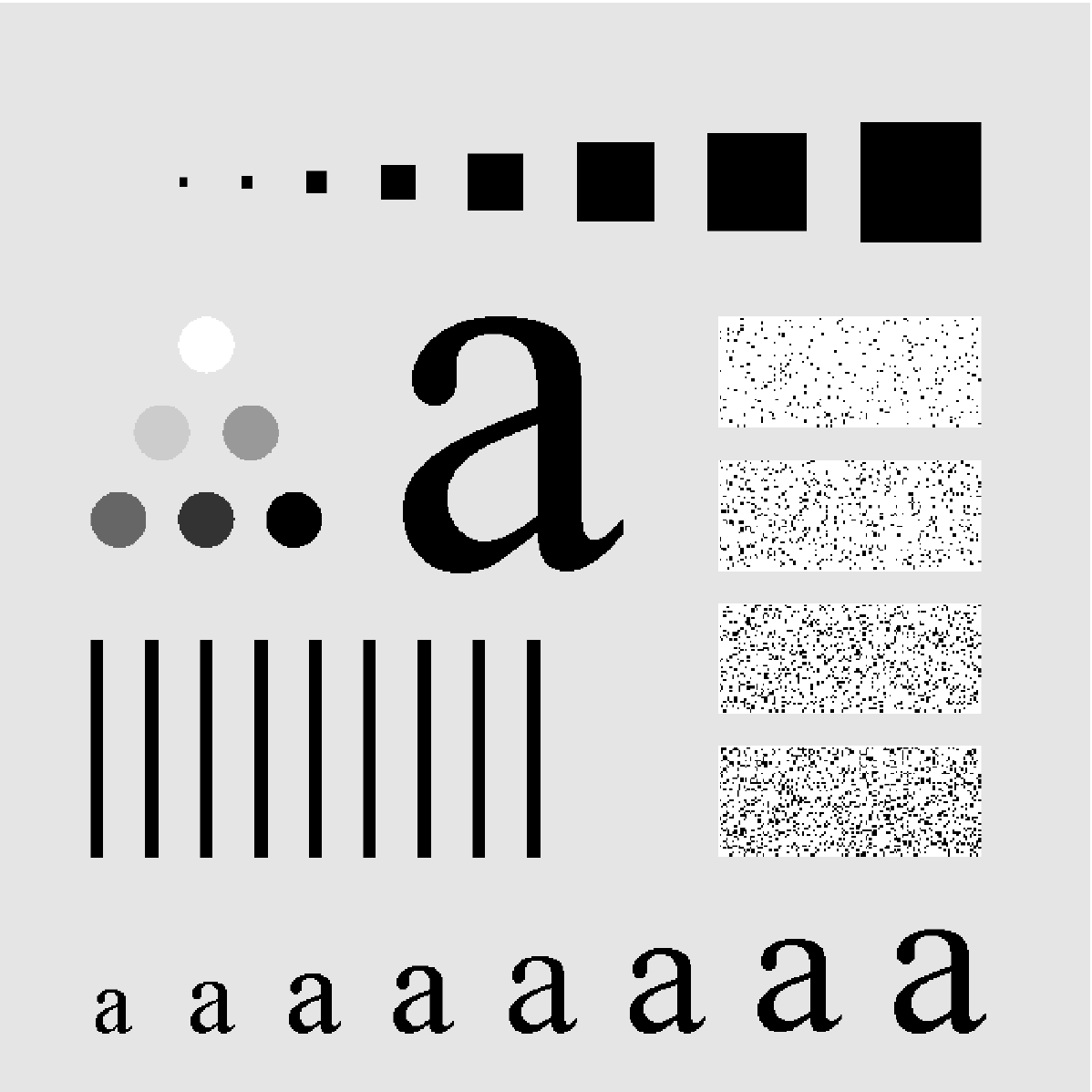} &
				\includegraphics[width=0.3\textwidth]{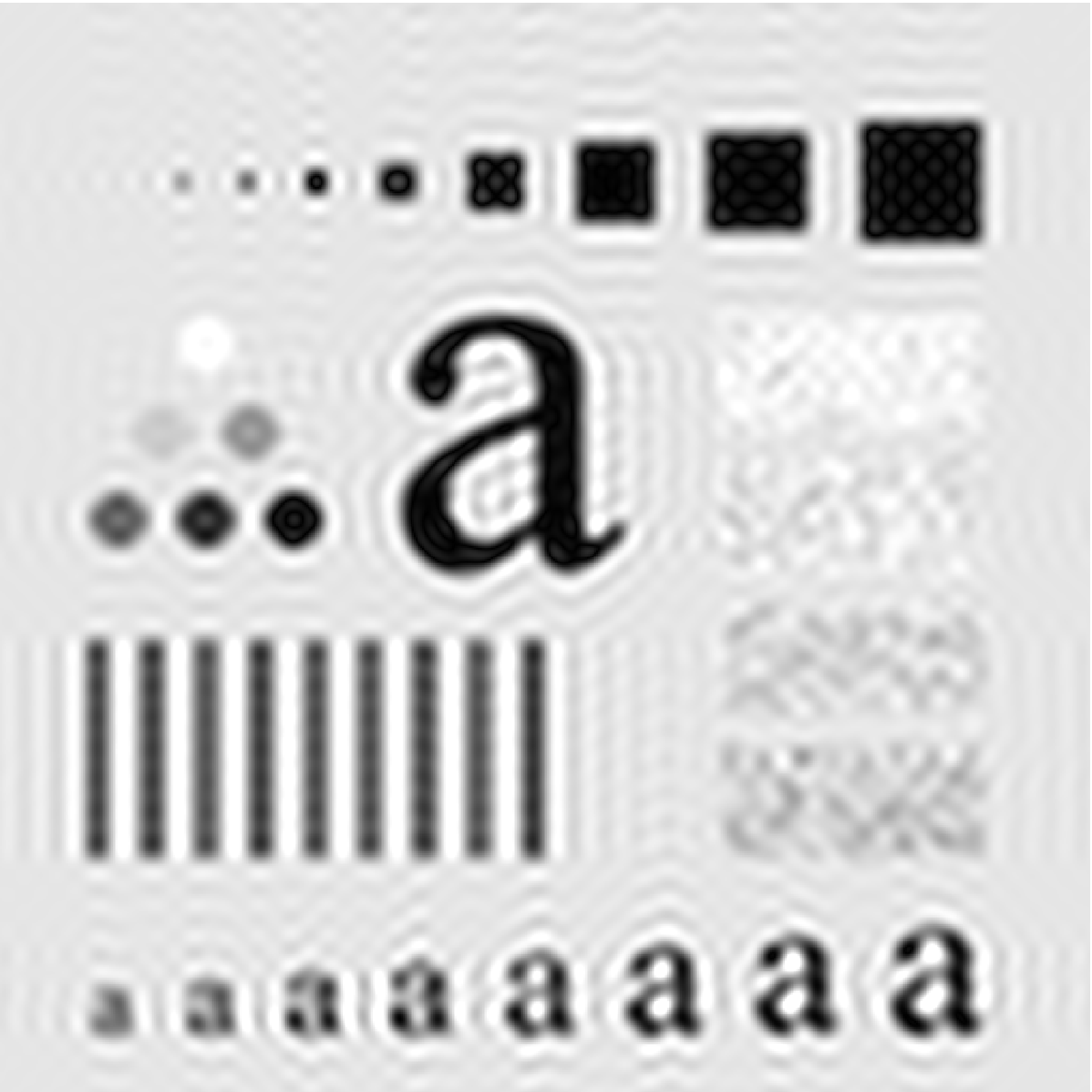} &
				\includegraphics[width=0.3\textwidth]{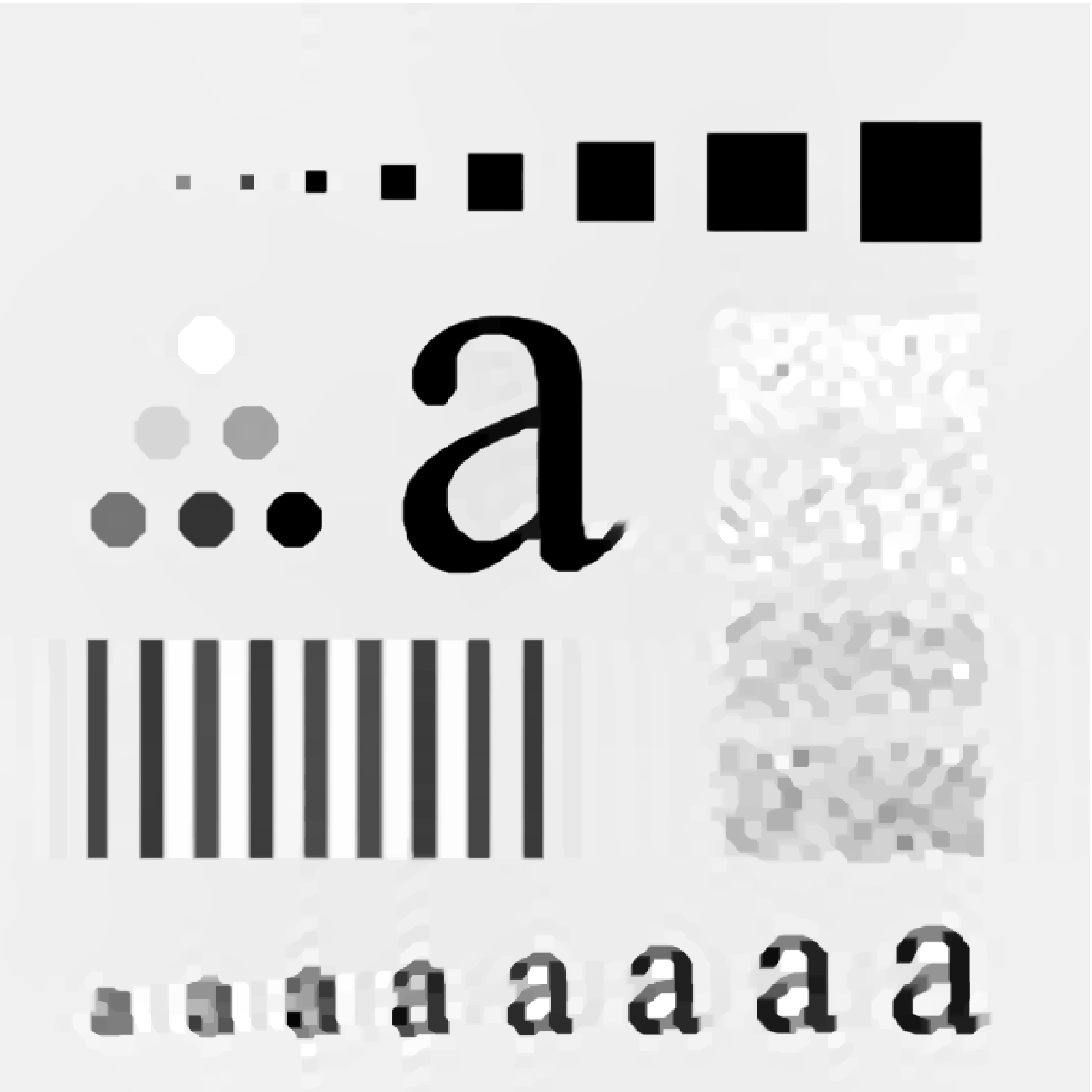} \\
						$L_p$ & $L_1$-$\alpha L_2$ & $L_1/L_2$\\
				\includegraphics[width=0.3\textwidth]{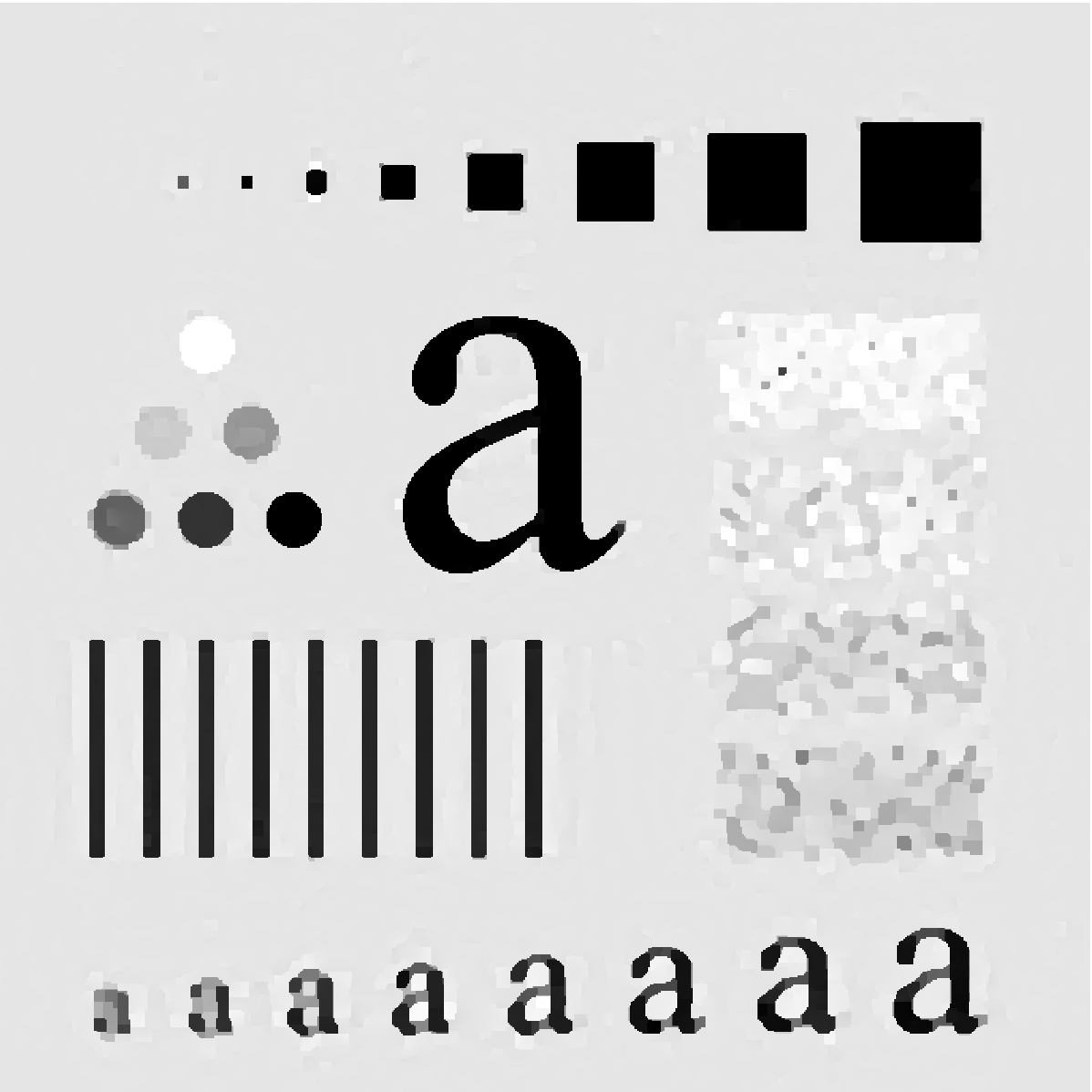} &
				\includegraphics[width=0.3\textwidth]{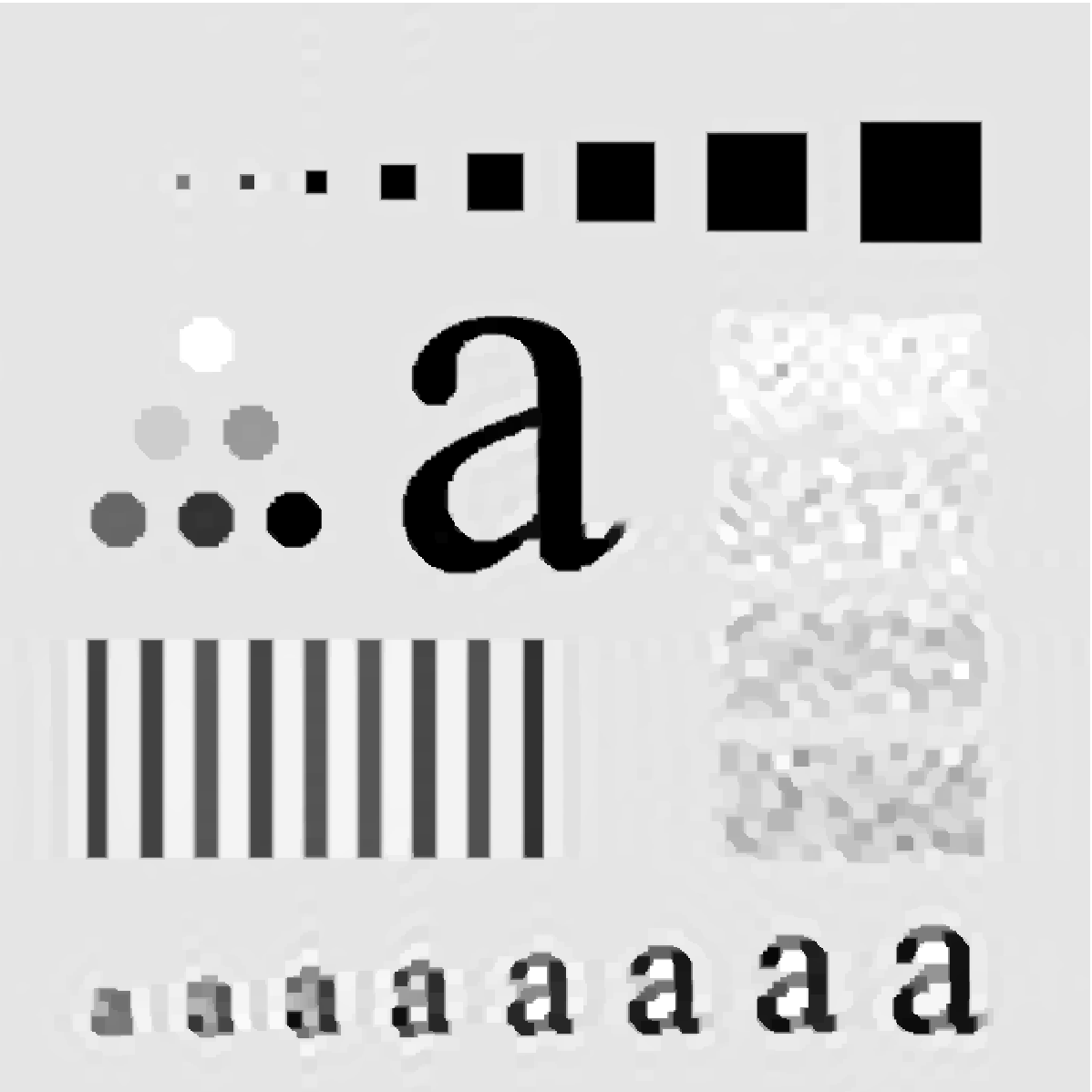} &
				\includegraphics[width=0.3\textwidth]{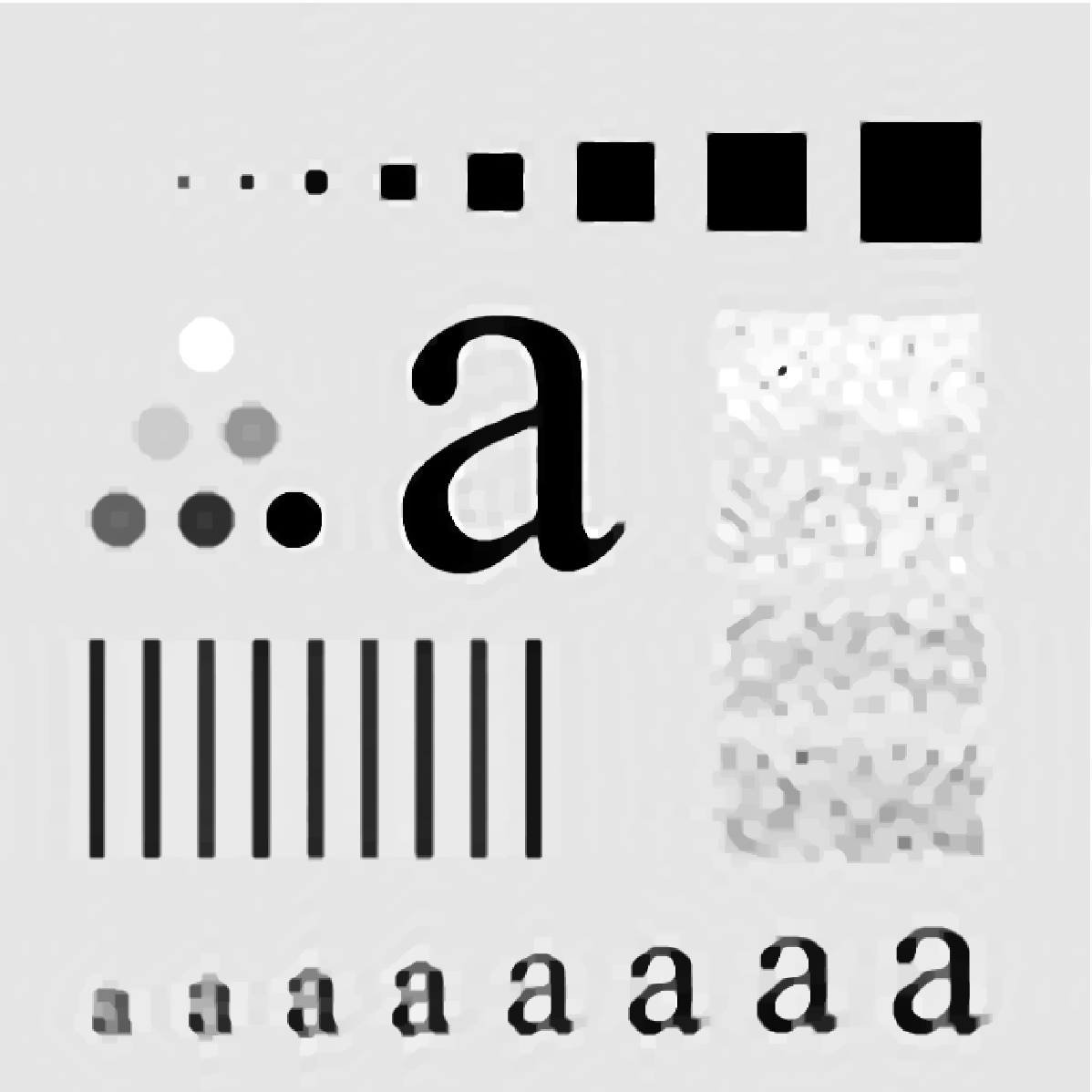}
			\end{tabular}
		\end{center}
		\caption{\tp{Super-resolution from $1\%$ low frequency data.}  }\label{fig:SR}
	\end{figure}

\subsection{MRI reconstruction}

To generate the ground-truth MRI images, we utilize   a simulated brain database \cite{brainweb,brainweb2} that has full three-dimensional data volumes obtained
by an MRI simulator \cite{MRI_simulater} in different modalities such as T1 and T2 weighted images.  As a proof of concept, we extract one slice from the 3D T1 and T2 data as  testing images and take frequency data along  \tp{radial} lines. 
The visual comparisons are presented  for 25 \tp{radial} lines  (about 13.74\% measurements) in  \Cref{fig:MRI_gt}.
 We include the zero-filled method as mentioned in super-resolution, which unfortunately fails to recover the contrast for both T1 and T2. The other regularization methods yield more blurred results than the proposed $L_1/L_2$ approach. Particularly worth noticing is that our proposed model can effectively separate the gray matter and white matter  in the T1 image, as highlighted in the zoom-in regions. Furthermore, we plot the horizontal and vertical profiles in \Cref{fig:MRI_hor_ve}, where we can see clearly that the restored profiles via $L_1/L_2$ are closer to the ground truth than the other approaches, especially near these peaks that can be reached by \tp{$L_p,$} $L_1$-$\alpha L_2,$ and $L_1/L_2$, but not $L_1.$  As a further comparison, we present the MRI reconstruction results under various number of lines (20, 25, and 30) in \Cref{Tab:MRI}, which
 demonstrates significant improvements of $L_1/L_2$ over the other models in term of   PSNR and RE. 


\begin{figure}[htp]
		\begin{center}
				\includegraphics[width=0.19\textwidth]{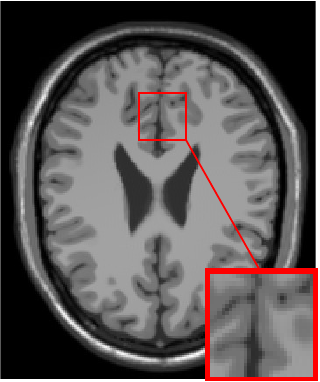}  
				\includegraphics[width=0.19\textwidth]{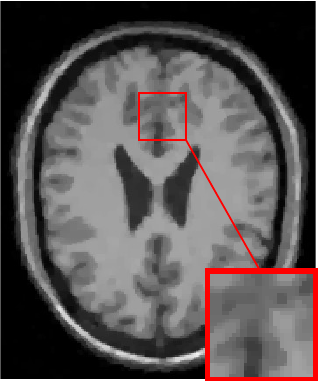} 
				\includegraphics[width=0.19\textwidth]{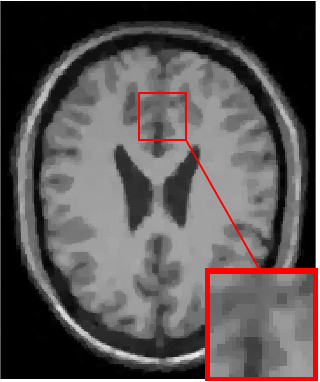} 
				\includegraphics[width=0.19\textwidth]{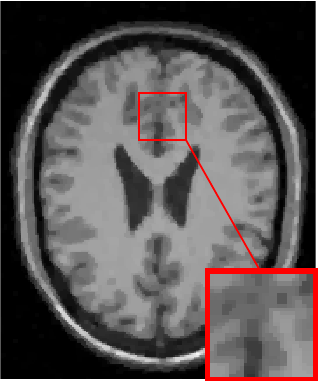} 
				\includegraphics[width=0.19\textwidth]{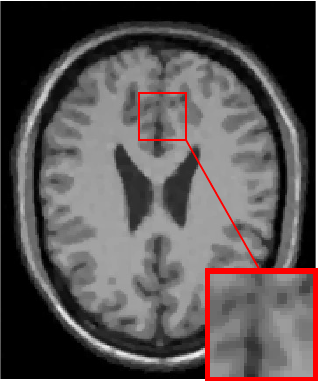} \\
				\includegraphics[width=0.19\textwidth]{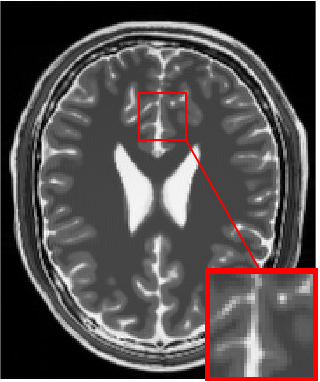} 
				\includegraphics[width=0.19\textwidth]{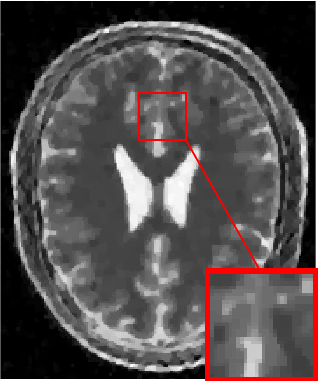} 
				\includegraphics[width=0.19\textwidth]{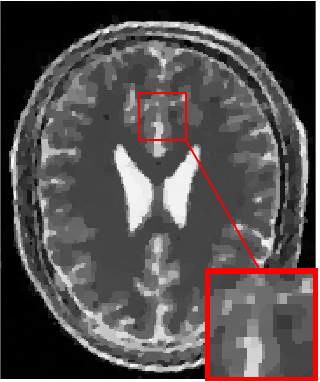} 
				\includegraphics[width=0.19\textwidth]{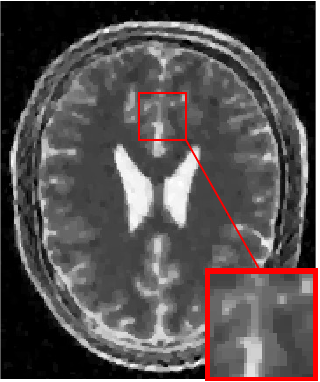} 
				\includegraphics[width=0.19\textwidth]{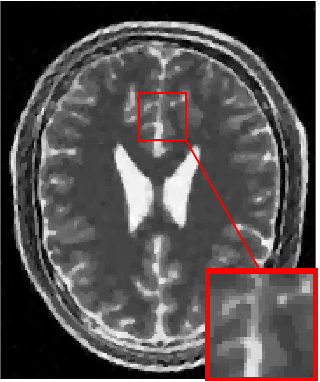} \\
		\end{center}
		\caption{\tp{MRI reconstruction from frequency measurements along 25 \tp{radial} lines of T1 (top row) and T2 (bottom row). From left to right: ground truth, $L_1$, $L_p$, $L_1$-$\alpha L_2$, and $L_1/L_2$. }
		}\label{fig:MRI_gt}
	\end{figure}

	
		\begin{figure}[htp]
		\begin{center}
				\includegraphics[height=0.65\textwidth]{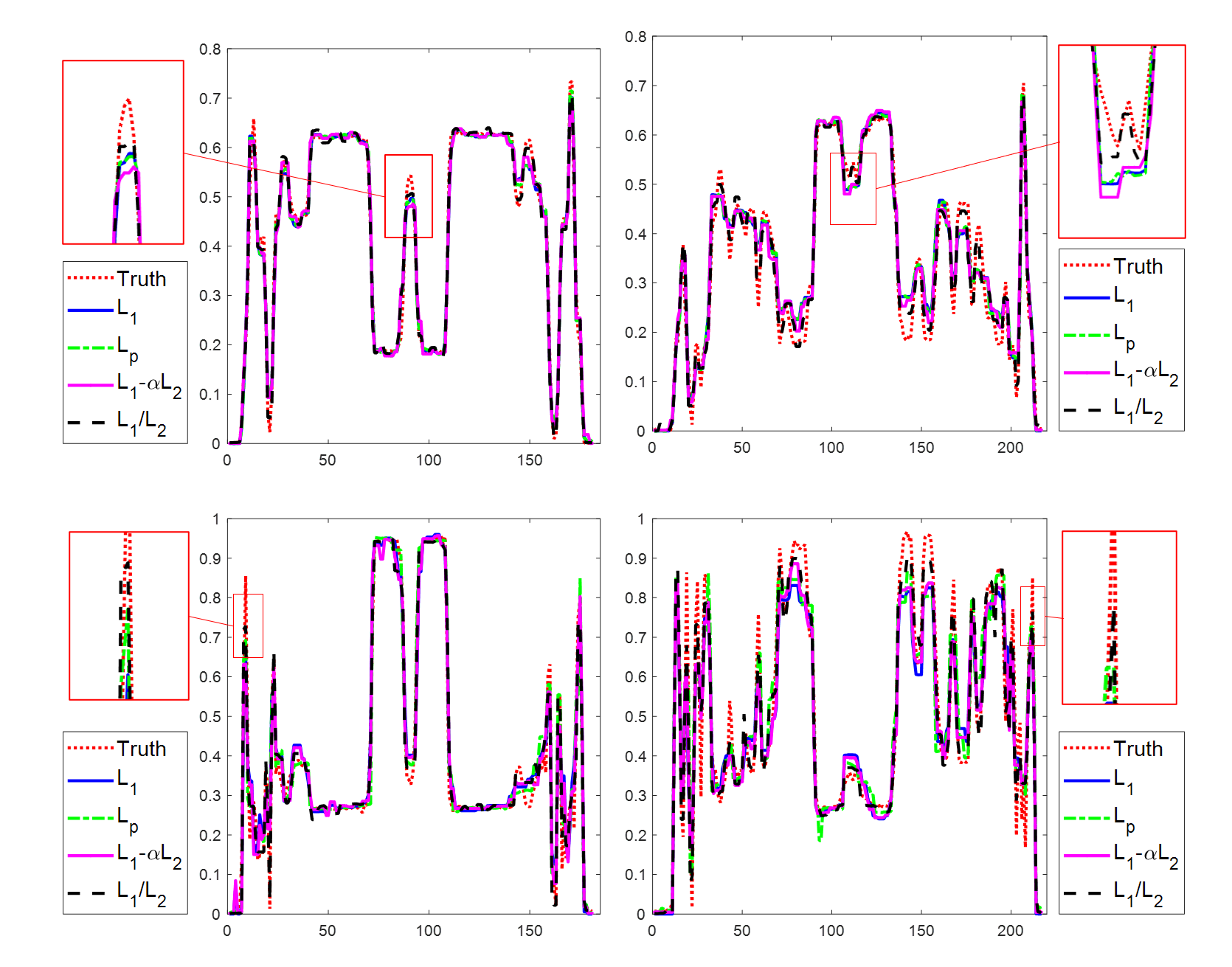} 
		\end{center}
		\caption{\tp{Horizontal (left) and vertical (right) profiles of MRI reconstruction results from $25$ \tp{radial} lines  for T1 (top) and T2 (bottom). } }\label{fig:MRI_hor_ve}
	\end{figure}


		\begin{table}[t]
		\begin{center}
			\scriptsize
			\caption{\tp{MRI reconstruction from   different numbers of radial lines. }  } 
		\tp{	\begin{tabular}{c|c|cc|cc|cc|cc|cc} 
				\hline 
				\multirow{2}{*}{Image} & \multirow{2}{*}{Line} & \multicolumn{2}{c|}{ZF}& \multicolumn{2}{c|}{$L_1$ } & \multicolumn{2}{c|}{$L_p$ } & \multicolumn{2}{c|}{$L_1$-$\alpha L_2$ } & \multicolumn{2}{c}{$L_1/L_2$ }  \\ \cline{3-12} 
				&  & PSNR &   RE & PSNR &   RE & PSNR & RE & PSNR &  RE & PSNR &   RE  \\ \hline
				\multirow{3}{*}{T1}&  20 & 21.26 & 22.13\% &  27.20 & 11.17\% &  27.24 & 11.11\% &  27.41 & 10.90\% &  29.94 & 8.15\%   \\ \cline{2-12} 
				& 25 & 23.42 & 17.26\% &  30.32 & 7.80\% &  30.06 & 8.04\% &  30.34 & 7.78\% &  33.21 & 5.59\%    \\ \cline{2-12}
				& 30 & 24.07 & 16.02\% &  31.92 & 6.48\% &  31.63 & 6.70\% &  31.70 & 6.65\% &  34.84 & 4.63\%
				 \\ \hline	 
				\multirow{3}{*}{T2}&  20 & 17.89 & 33.91\% &  21.12 & 23.37\% &  21.13 & 23.34\% &  21.74 & 21.75\% &  23.50 & 17.76\%     \\ \cline{2-12} 
				& 25 & 18.83 & 30.44\% &  22.92 & 18.99\% &  23.23 & 18.33\% &  23.59 & 17.58\% &  25.80 & 13.63\%   \\ \cline{2-12}
				& 30 & 19.42 & 28.43\% &  24.27 & 16.26\% &  24.76 & 15.36\% &  25.10 & 14.78\% &  27.60 & 11.09\%  
				 \\ \hline	 
			\end{tabular}}\label{Tab:MRI}
			\medskip
		\end{center}
	\end{table}



\subsection{Limited-angle CT reconstruction}
Lastly, we examine the limited-angle CT reconstruction problem on  two standard phantoms: Shepp-Logan  (SL)   by Matlab's built-in command ({\tt phantom}) and  FORBILD (FB) \cite{FB_ph}. Notice that the FB phantom has a very low image contrast and  we display it with the grayscale window of [1.0, 1.2] in order  to reveal its structures; see Figure~\ref{fig:SL_FB_restored}. 
To synthesize the  CT projected data, we discretize both phantoms  at a resolution of $256\times256$. The forward operator $A$ is generated as the discrete Radon transform with a parallel beam geometry sampled at $ \theta_{\rm{Max}}/30$ over a range of   $\theta_{\rm{Max}}$, resulting in a sub-sampled data of size $362\times31$. Note that  we use the same number of projections when we vary ranges of projection angles.  The simulation process is available in the IR and AIR toolbox \cite{Nagy_toolIR,hansen2012air}. 
  Following the discussion in \Cref{sect:exp_alg},  we set jMax$=1$ for the subproblem.  We  compare
 the regularization models with a clinical standard approach, called  simultaneous algebraic reconstruction technique (SART) \cite{SART}. 
  
 As the SL phantom has relatively simpler structures than FB, we  present an extremely limited angle of only $30^\circ$ scanning range in \Cref{Tab:noiseless}, which shows that $L_1/L_2$ achieves  significant improvements over SART, $L_1$, \tp{$L_p$,} and $L_1$-$\alpha L_2$ in terms of \tp{PSNR} and RE. 
	Visually, we present the CT reconstruction results of  $45^\circ$ projection range for SL (SL-$45^\circ$)  and $75^\circ$ for FB (FB-$75^\circ$) in \Cref{fig:SL_FB_restored}. In the first case (SL-$45^\circ$), SART fails to recover the ellipse inside of the skull with such a small range of projection angles. All the regularization methods perform much better owing to their sparsity promoting property.  However, the $L_1$ model is unable to restore the bottom skull and preserve details of some ellipses in the middle. The proposed $L_1/L_2$ method leads an almost exact recovery with a relative error of $0.64\%$ and visually no difference to the  ground truth. 
	In the second case (FB-$75^\circ$), we show the reconstructed images with the window of [1.0, 1.2], and observe some fluctuations inside of the skull. \tp{$L_p$ performs the best, while}
	$L_1/L_2$   restores the most details of the image
among the competing methods.   We plot the horizontal and vertical profiles in \Cref{fig:FB75_vh}, which illustrates that $L_1/L_2$ leads to the smallest fluctuations  compared to the  other methods. We also observe a well-known artifact of 
	the $L_1$ method, i.e., loss-of-contrast, as its profile  fails to reach the height of jump on the intervals such as $[160,180]$ in the left plot and $[220, 230]$ in the right plot of \Cref{fig:FB75_vh}, while $L_1/L_2$ has a good recovery in these regions.


	\begin{figure}
		\begin{center}
					\includegraphics[width=0.19\textwidth]{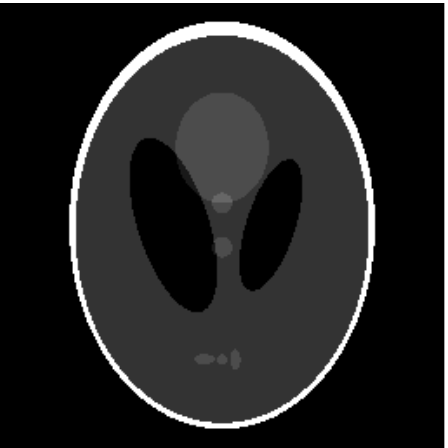} 
				\includegraphics[width=0.19\textwidth]{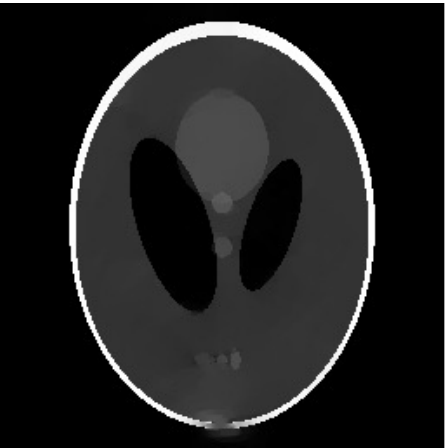} 
				\includegraphics[width=0.19\textwidth]{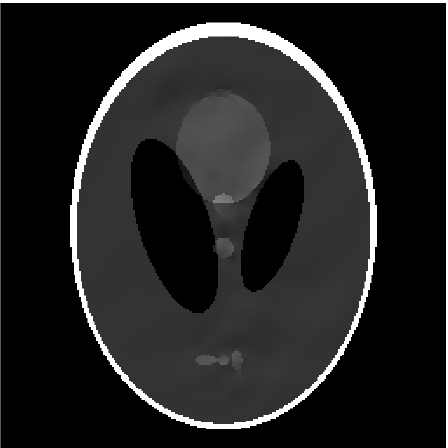} 
				\includegraphics[width=0.19\textwidth]{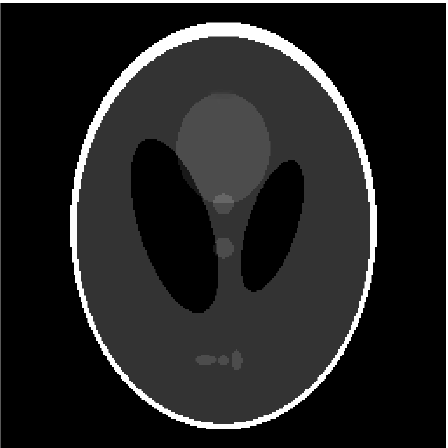} 
				\includegraphics[width=0.19\textwidth]{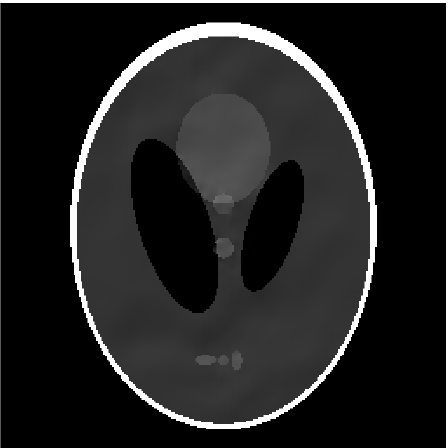} \\
				\includegraphics[width=0.19\textwidth]{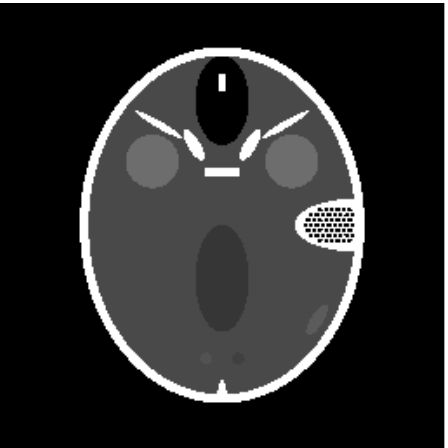} 
				\includegraphics[width=0.19\textwidth]{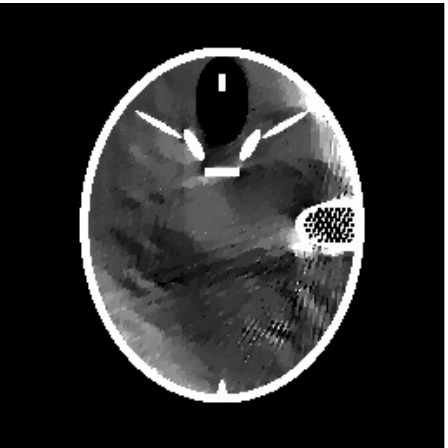} 
	        	\includegraphics[width=0.19\textwidth]{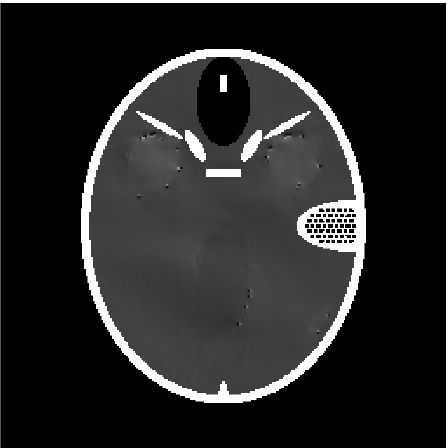} 
				\includegraphics[width=0.19\textwidth]{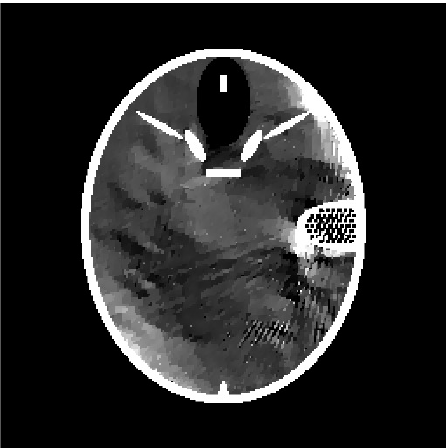}  
				\includegraphics[width=0.19\textwidth]{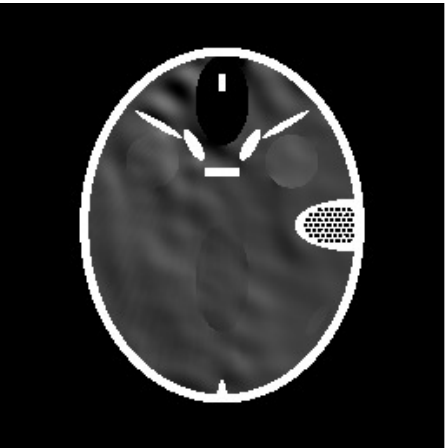} 
		\end{center}
		\caption{\tp{CT reconstruction results of SL-$45^\circ$  (top) and FB-$75^\circ$ (bottom). From left to right: ground truth, $L_1$, $L_p$, $L_1$-$\alpha L_2$, and $L_1/L_2$.  The gray scale window is $[0,1]$ for SL and $[1,1.2]$ for FB. }}\label{fig:SL_FB_restored}
	\end{figure}


	\begin{figure}
		\begin{center}
			\begin{tabular}{cc}
				\includegraphics[width=0.4\textwidth]{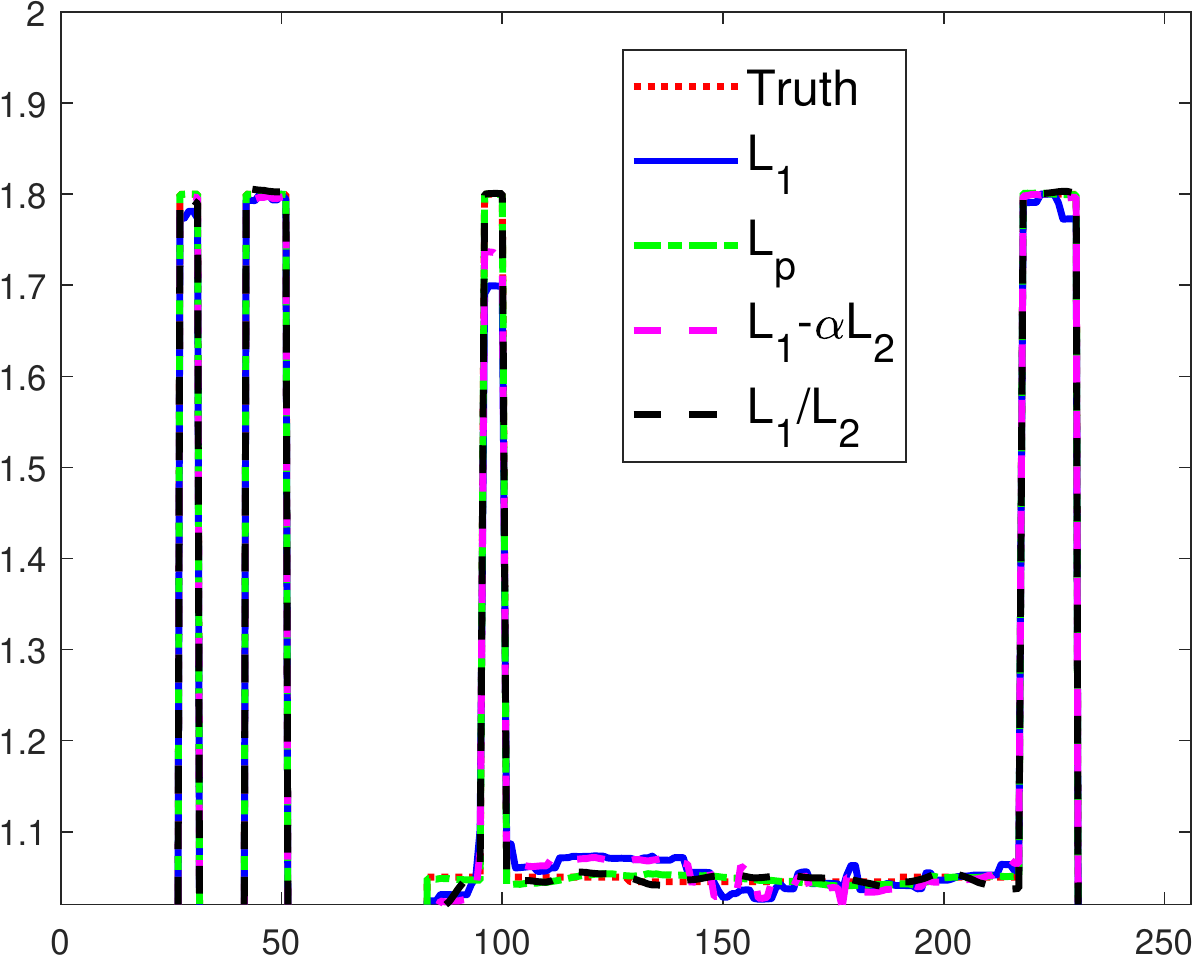} &
				\includegraphics[width=0.4\textwidth]{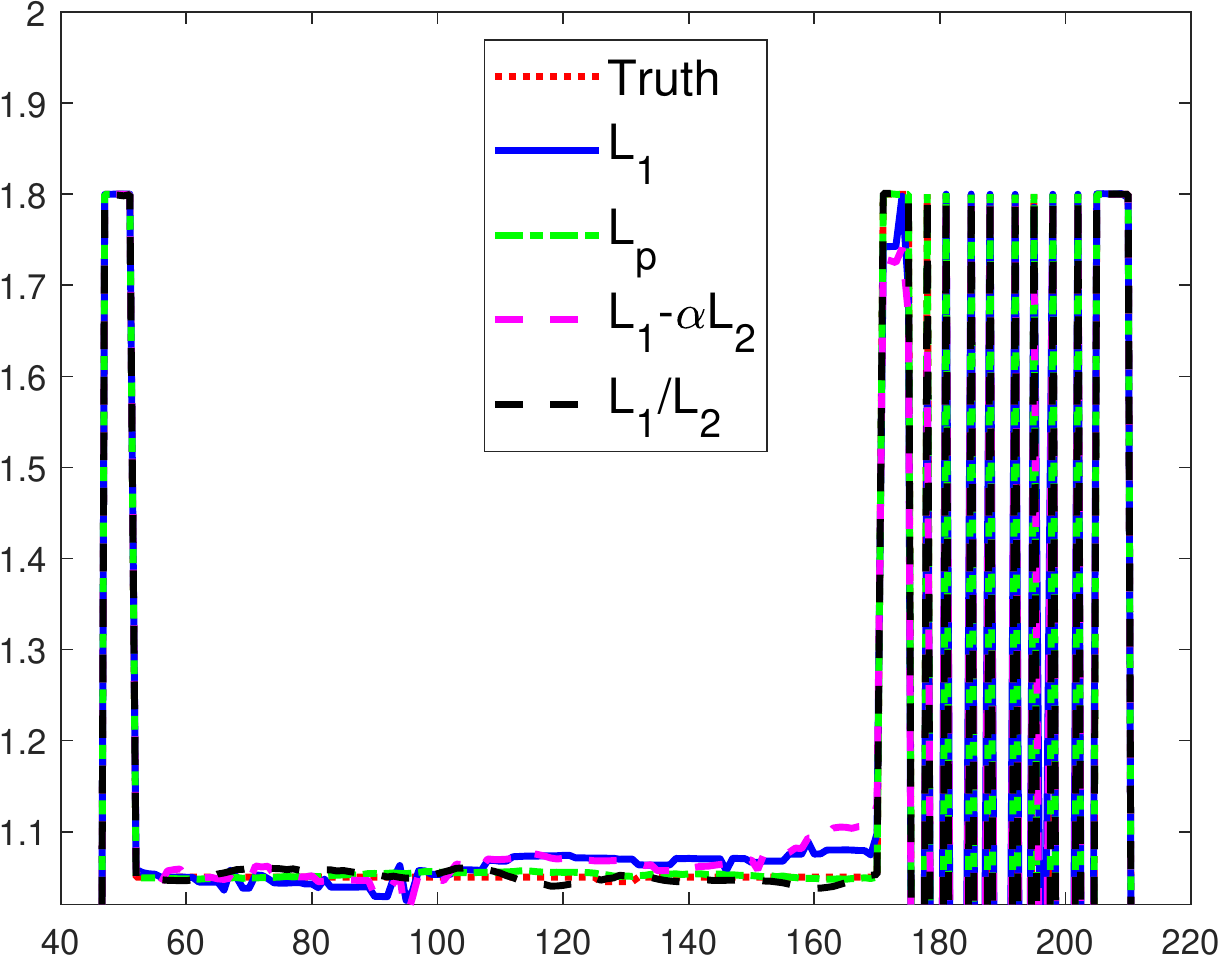} 
			\end{tabular}
		\end{center}
		\caption{\tp{Horizontal and vertical profiles of CT reconstruction results of FB-$75^\circ$.} }\label{fig:FB75_vh}
	\end{figure}


	\begin{table}[htbp]
		\begin{center}
			\scriptsize
			\caption{\tp{ CT reconstruction with difference ranges of scanning  angles. } } 		\tp{
			\begin{tabular}{c|c|cc|cc|cc|cc|cc}
				\hline
				\multirow{2}{*}{phantom} & \multirow{2}{*}{range} & \multicolumn{2}{c|}{SART} & \multicolumn{2}{c|}{$L_1$} & \multicolumn{2}{c|}{$L_p$} & \multicolumn{2}{c|}{$L_1$-$\alpha L_2$} & \multicolumn{2}{c}{$L_1/L_2$} \\ \cline{3-12} 
				& & PSNR  &   RE &  PSNR  &   RE & PSNR  & RE & PSNR  & RE & PSNR  &  RE  \\ \hline
				\multirow{3}{*}{SL}&  $30^{\circ}$ & 15.66 & 66.95\% &   28.32 & 15.57\% &  40.25 & 3.95\% &  38.15 & 5.02\% &   60.77 & 0.37\%   \\ \cline{2-12} 
				& $45^{\circ}$ & 16.08 & 63.78\% &   33.33 & 8.75\% &  44.06 & 2.54\% &  63.34 & 0.28\% &  70.42 & 0.12\%  \\ \cline{2-12} 
				& $60^{\circ}$ & 16.48 & 60.92\% &   43.37 & 2.75\% &  46.50 & 1.92\% &  80.19 & 0.04\% &  73.46 & 0.09\%   \\ \hline
				\multirow{3}{*}{FB}&  $60^{\circ}$ & 15.61 & 40.16\% &   25.43 & 12.96\% &  58.01 & 0.30\% &  26.24 & 11.81\% &  46.97 & 1.09\%  \\ \cline{2-12} 
				& $75^{\circ}$ & 16.14 & 37.79\% &   28.84 & 8.76\% &  59.02 & 0.27\% &  29.49 & 8.13\% &  49.30 & 0.83\%  \\ \cline{2-12} 
				& $90^{\circ}$ & 16.64 & 35.68\% &   69.68 & 0.08\% &  62.05 & 0.19\% &  75.67 & 0.04\% &  70.57 & 0.07\%     \\ \hline
			\end{tabular}}\label{Tab:noiseless}
			\medskip
		\end{center}
	\end{table}

\section{Conclusions and future works}
	\label{sec:conclusions}
	
In this paper, we considered the use of $L_1/L_2$ on the gradient as an objective function to promote sparse gradients for imaging problems. We started from a series of 1D piecewise signal recovery and demonstrated the superiority of the ratio model over  $L_1$, which is widely known as the total variation. To facilitate the discussion on the empirical evidences, we focused on a constrained model, and proposed a splitting algorithm scheme that has provable convergence for ADMM. We conducted extensive experiments to demonstrate that our approach outperforms the state-of-the-art gradient-based approaches. 
Motivated by the empirical studies in \Cref{sec:toy_example}, we will devote ourselves to 
the exact recovery of the TV regularization with respect to the minimum separation of the gradient spikes. 
We are also interested in extending the analysis to the unconstrained formulation, which is widely applicable in image processing.   

\section*{Acknowledgments}
C.~Wang was partially supported by HKRGC Grant No.CityU11301120 and NSF CCF HDR TRIPODS grant 1934568. M.~Tao was supported in part by the Natural Science Foundation of China (No. 11971228) and  the Jiangsu Provincial National Natural Science Foundation of China (No. BK20181257). C-N.~Chuah was partially supported by NSF CCF HDR TRIPODS grant 1934568. J.~Nagy was partially
supported by NSF DMS-1819042 and NIH 5R01CA181171-04. Y.~Lou was partially supported by NSF grant CAREER
1846690.

\section*{References}		
	
	\bibliographystyle{vancouver}
	\bibliography{refer_l1dl2}
	
\end{document}